\newtheorem{theorem}{Theorem}
\newtheorem{lemma}[theorem]{Lemma}
\newtheorem{proposition}[theorem]{Proposition}
\newtheorem{example}[theorem]{Example}
\let\hat\widehat
\theoremstyle{remark}
\newtheorem{remark}{Remark}
\newcommand{\R}{{\mathbb R}}
\newcommand{\E}{{\mathbb E}}
\newcommand{\K}{{\mathbb K}}
\newcommand{\cA}{{\mathcal A}}
\newcommand{\cB}{{\mathcal B}}
\newcommand{\cS}{{\cC}}
\newcommand{\cC}{{\mathcal C}}
\newcommand{\cW}{{\mathcal W}}
\newcommand{\blind}{0}
\begin{document}

\def\spacingset#1{\renewcommand{\baselinestretch}%
{#1}\small\normalsize} \spacingset{1}

\if0\blind
{
  \title{\bf Statistical Inference with Local Optima}
  \author{ Yen-Chi Chen\thanks{Supported by NSF grant  DMS-1952781, DMS-2112907, and NIH grant U24-AG072122.
	}\hspace{.2cm}\\
    Department of Statistics, University of Washington}
  \maketitle
} \fi

\if1\blind
{
  \bigskip
  \bigskip
  \bigskip
  \begin{center}
    {\LARGE\bf 
    	\vspace{1 in}
    Statistical Inference with Local Optima}
\end{center}
  \medskip
} \fi

\bigskip

\begin{abstract}
\noindent
We study the statistical properties of an estimator derived by
applying a gradient ascent method with multiple initializations to a multi-modal likelihood function. 
We derive the population quantity that is the target of this estimator 
and study the properties of confidence intervals (CIs) constructed from asymptotic normality and the bootstrap approach.
In particular, we analyze the coverage deficiency due to finite number of random initializations. 
We also investigate the CIs by inverting the likelihood ratio test, the score test, and the Wald test,
and we show that the resulting CIs may be very different.
We propose a two-sample test procedure even when the MLE is intractable. 
In addition,
we analyze the performance of the EM algorithm under random initializations
and 
derive the coverage of a CI
with a finite number of initializations. 
\end{abstract}

\noindent%
\emph{Keywords:}  
Maximum likelihood estimation, two-sample test, non-convex, gradient descent, EM algorithm
\vfill

\newpage
\spacingset{1.45} 

\section{Introduction}

%

%
%
%
%

Many statistical analyses involve finding the maximum
of an objective function. 
The maximum likelihood estimator (MLE) 
is the maximum of the likelihood function. 
In variational inference \citep{blei2017variational},
the variational estimator is constructed by maximizing the evidence lower bound. 
In regression analysis,
we estimate the parameter by minimizing the loss function,
which is equivalent to maximizing the negative loss function. 
In nonparametric mode hunting \citep{parzen1962estimation, romano1988weak,romano1988bootstrapping},
the parameter of interest is the location of the density global mode;
therefore, we are finding the point that maximizes the density function.

Each of the above analyses works well when the objective function is concave.
However, when the objective function is non-concave and has many 
local maxima,
finding the (global) maximum can be challenging and 
even computationally intractable. 
Moreover, because the computed estimator may not be the actual MLE,
the resulting confidence set may not have the nominal coverage.

In this paper, we focus on the analysis of the MLE of a multi-modal likelihood function.
%
%
Our analysis can also be applied to the examples mentioned before and other types of M-estimators \citep{van1998asymptotic}.
Maximizing a multi-modal likelihood function is a common scenario encountered while we fit
a mixture model \citep{titterington1985statistical, redner1984mixture}.
Figure \ref{fig::ex01} plots the log-likelihood function of fitting a 2-Gaussian mixture model
to data generated from a 3-Gaussian mixture model, in which the orange color indicates 
the regions of parameter space with high likelihood values.
There are two local maxima, denoted by the blue and green crosses. 
The blue maximum is the global maximum.
To find the maximum of a multi-modal likelihood function,
we often
apply
a gradient ascent method such as the EM algorithm \citep{titterington1985statistical, redner1984mixture}
with an appropriate initial guess of the MLE.
The right panel of Figure \ref{fig::ex01} shows the result of applying a gradient ascent algorithm to a few initial points.
Each black dot is an initial guess of the MLE, and the corresponding black curve
indicates the gradient ascent path starting from this initial point
to a nearby local maximum.
Although it is ensured that a gradient ascent method does not decrease the likelihood value (when the step size
is sufficiently small),
it may converge to a local maximum or a critical point rather than the global maximum. 
For instance, in the right panel of Figure~\ref{fig::ex01},
three initial point converges to the green cross, which is not the global maximum.
To resolve this issue,
we often
randomly initialize the starting point (initial guess) many times
and then choose the convergent point 
with the highest likelihood value
as the final estimator \citep{mclachlan2004finite, jin2016local}.
However, as we have not explored the entire parameter space,
it is hard to determine whether the final estimator 
is indeed the MLE. 
Although the theory of MLEs suggests that the MLE is a $\sqrt{n}$-consistent estimator
of the population maximum (population MLE) under appropriate conditions \citep{titterington1985statistical},
our estimator may not be
a $\sqrt{n}$-consistent estimator 
because it is generally not the MLE\footnote{
In fact, for a mixture model, the convergence rate could be slower than $\sqrt{n}$ if the number of mixture $k$ is not fixed; see, e.g.,
\cite{li1999mixture} and \cite{genovese2000rates}.}. 
The CI constructed from the estimator inherits the same problem;
if our estimator is not the MLE,
it is unclear what population quantity the resulting CI is covering.

\begin{figure}
\center
\includegraphics[width=2.25in]{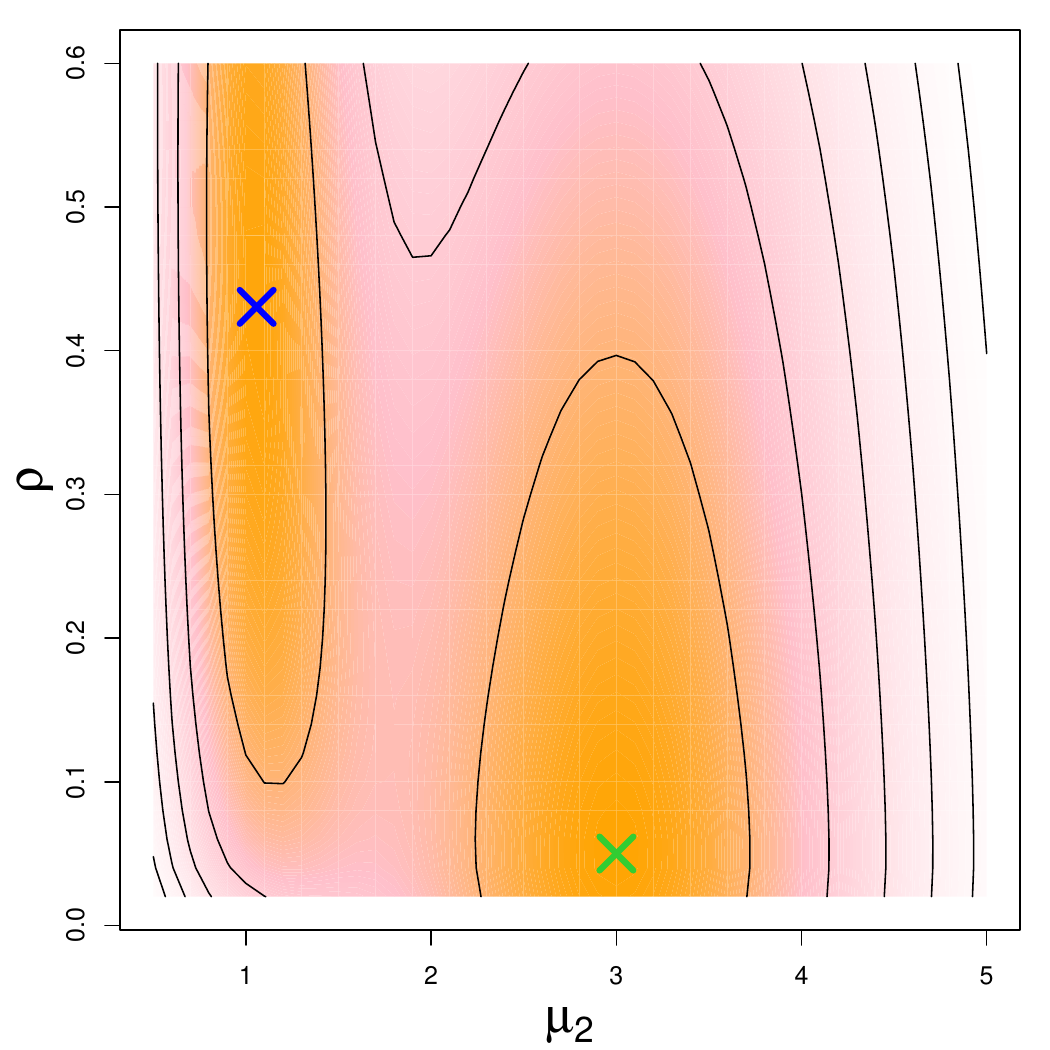}
\includegraphics[width=2.25in]{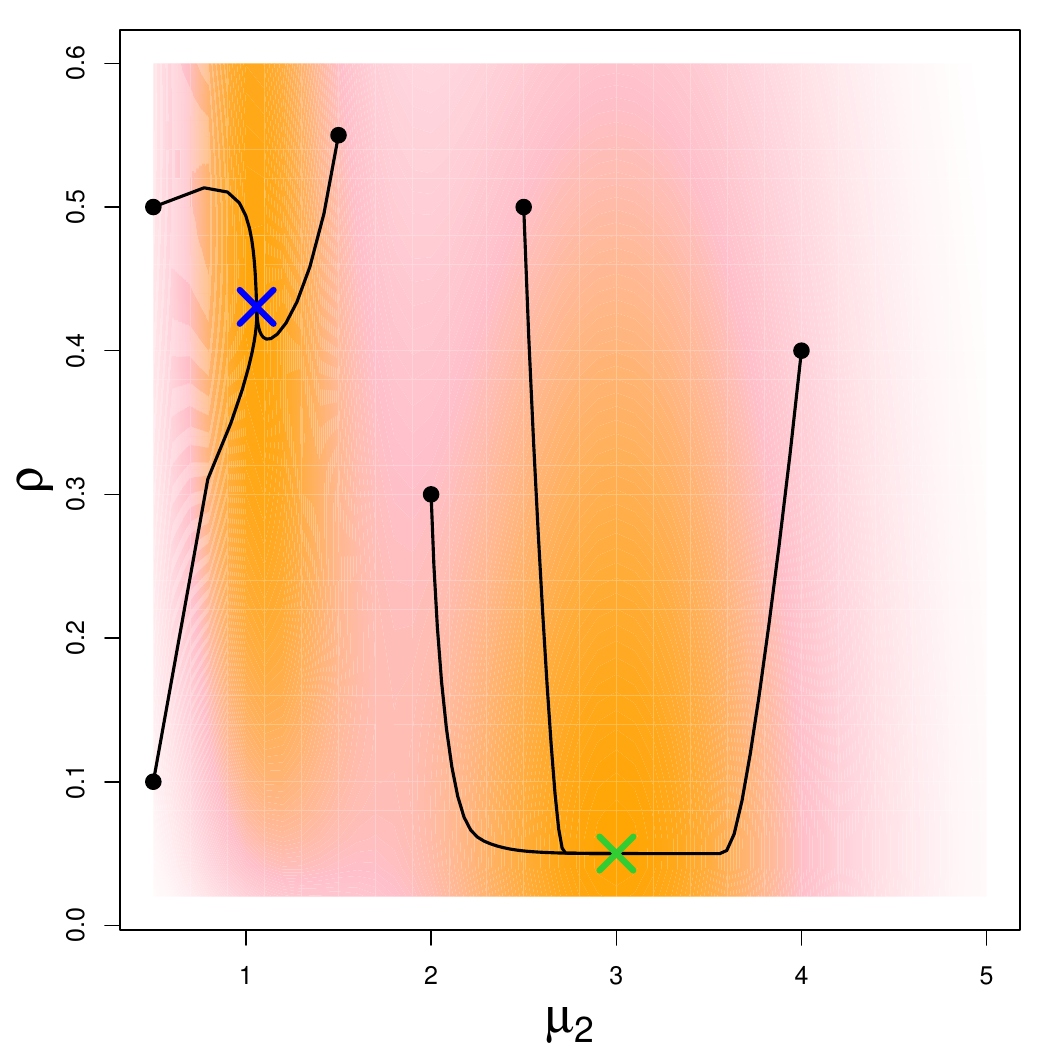}
\caption{Log-likelihood function
of fitting a 2-Gaussian mixture model to a data that is generated from a 3-Gaussian mixture model. 
The true distribution has a density function:
$p_0(x) = 0.5\phi(x;0,0.2^2)+0.45\phi(x;0.75,0.2^2)+0.05\phi(x;3, 0.2^2)$,
where $\phi(x;\mu,\sigma^2)$ is the density of a Gaussian with center $\mu$ and variance $\sigma^2$.
We fit a 2-Guassian mixture with the center of the first Gaussian being set to $0$ and the variance of both Gaussians being $0.2^2$.
The parameters of interest are the center of the second Gaussian $\mu_2$ and the proportion of the second Gaussian $\rho$.
Namely, the log-likelihood function is $L(\mu_2,\rho) = \E(\log \left((1-\rho)\phi(X;0,0.2^2)+\rho \phi(X;\mu_2,0.2^2)\right)),$
where $X$ has a PDF $p_0$.
{\bf Left:} Contour plot of the log-likelihood function $L(\mu_2,\rho)$.
Regions with orange color are where the log-likelihood function has a high value.
The two local maxima are denoted by the blue and green crosses.
{\bf Right:}
The trajectories of the gradient ascent method
with multiple initial points. 
Each solid black dot is an initial point 
and the curve attached to it indicates the trajectory of the gradient ascent method
starting from that initial point.
}
\label{fig::ex01}
\end{figure}

The goal of this paper is to analyze the statistical properties of this estimator. 
Note that we do not provide a solution to resolve the problem causing by multiple local maxima;
instead, we attempt to analyze how the local maxima affect the performance of the estimator and the validity of a 
related statistical procedure.
Although this estimator is not the MLE, it is 
commonly used in practice.
To understand what population quantity this estimator is estimating,
we study the behavior of estimators obtained from 
applying a gradient ascent algorithm to a likelihood function that has multiple local maxima.
We investigate the underlying population quantity being estimated and analyze the 
properties of resulting CIs.
Specifically, our main contributions are summarized as follows. 

\emph{Main Contributions.}
\begin{enumerate}
\item 
We derive the population quantity being estimated by the 
MLE  
when the likelihood function has multiple local maxima (Theorem~\ref{thm::precisionset} and \ref{thm::likelihood}).
\item 
We analyze the population quantity that
a
normal CI covers
and
study its coverage (Theorem~\ref{thm::AC}).
\item 
We discuss how to use the bootstrap method to
construct a meaningful CI and derive its coverage (Theorem~\ref{thm::BC2}). 

\item 
We show that the CIs from inverting the likelihood ratio test, score test,
and Wald test can be different (Section~\ref{sec::obj} and Figure~\ref{fig::CI_test}). 

\item We also discuss how to perform a two-sample test maintaining type-I error
when the MLE is intractable (Section \ref{sec::two}).


\item We analyze the probability that the EM algorithm recovers
the actual MLE (Section~\ref{sec::EM}) and study the coverage of its normal CI (Theorem~\ref{thm::EMCI}).

\item We apply our developed framework to investigate the old faithful dataset (Section~\ref{sec::real}).


\end{enumerate}

\emph{Related Work.}
The analysis of MLE under a multi-modal likelihood function has been analyzed for decades;
see, for example,
\cite{redner1981note,redner1984mixture,sundberg1974maximum,titterington1985statistical}. 
In the multi-modal case,
finding the MLE is often accomplished by applying a gradient ascent method such as the EM-algorithm
\citep{dempster1977maximum, wu1983convergence, titterington1985statistical}
with random initializations.
The analysis of initializations and convergence of the gradient ascent method can be found in
\cite{lee2016gradient,panageas2016gradient, jin2016local, balakrishnan2017statistical}.
In our analysis, we use the Morse theory \citep{milnor1963morse,morse1930foundations,banyaga2013lectures} to analyze the
behavior of a gradient ascent algorithm.
The analysis using the Morse theory is related to the work of 
\cite{chazal2014robust,mei2016landscape,chen2017statistical}.

\emph{Outline.}
We begin with providing the necessary background in
Section~\ref{sec::estimator}. 
Then, we discuss how to perform statistical inference with local optima
in Section~\ref{sec::inference}.
We extend our analysis to EM algorithm in Section~\ref{sec::EM}.
We provide data analysis in Section~\ref{sec::real}.
Finally, we discuss issues and opportunities for further work  in Section~\ref{sec::discuss}.
In the appendix, 
we include a simulation study to investigate the effect of initialization (Section \ref{sec::sim})
and
a generalization to mode hunting problem (Section \ref{sec::mode})
and
all technical assumptions and proofs (Section~\ref{sec::proofs}).

\section{Background}	\label{sec::estimator}

In the first few sections, we will focus on an estimator that attempts to maximize the likelihood function.
Let $X_1,\cdots, X_n\sim P_0$ be a random sample.
For simplicity, we assume that each $X_i$ is continuous.
In parametric estimation, we impose a model on the underlying population distribution function $P(\cdot;\theta)$.
This
gives a parametrized probability density function $p(\cdot;\theta)$. 
The MLE
estimates the parameter using
$$
\hat{\theta}_{MLE}  = \underset{\theta}{\sf argmax} \,\,\hat L_n(\theta) = \underset{\theta}{\sf argmax}\,\, \frac{1}{n}\sum_{i=1}^n \log p(X_i; \theta),
$$
which can be viewed as
an estimator of the population MLE:
$$
\theta_{MLE} = \underset{\theta}{\sf argmax} \,\, L(\theta)= \underset{\theta}{\sf argmax} \,\, \E(\log p(X_1;\theta)).
$$

When the likelihood function has multiple local modes (maxima), 
the MLE does not in general have a closed form;
therefore, we need a numerical method to find it.
A common approach is to apply a gradient ascent algorithm to the likelihood function
with a randomly chosen initial point.
To simplify our analysis,
we study a continuous-time gradient ascent flow of
the likelihood function (this is like conducting a gradient ascent with an infinitely small step size). 
When the likelihood function has multiple local maxima,
the algorithm may converge to a local maximum rather than to the global maximum.
As a result, we need to repeat the above procedure several times with different initial values
and
choose the convergent point
with the highest likelihood value.

To study the behavior of a gradient ascent flow, we define the following quantities.
Given an initial point $\theta^\dagger$, 
let $\hat{\gamma}_{\theta^\dagger}:\R\rightarrow \Theta$ be a gradient flow such that
$$
\hat \gamma_{\theta^\dagger}(0) = \theta^\dagger, \quad \hat \gamma_{\theta^\dagger}'(t) = \nabla \hat{L}_n(\gamma_{\theta^\dagger}(t)).
$$
Namely, the flow $\hat{\gamma}_{\theta^\dagger}$ starts at $\theta^\dagger$
and moves according to the gradient direction of $\hat{L}_n$.
The stationary point $\hat{\gamma}_{\theta^\dagger}(\infty) = \lim_{t\rightarrow \infty}\hat{\gamma}_{\theta^\dagger}(t)$ 
is the destination of the gradient flow starting at $\theta^\dagger$. 
Different starting points lead to flows that
may end at different points. 



Because our initial points are chosen randomly,
we view these
initial points
$\theta^\dagger_1,\cdots,\theta^\dagger_M$
as IID draws
from a distribution $\hat\Pi_n(\cdot)$ (see, e.g., Chapter 2.12.2 of \citealt{mclachlan2004finite})
that may depend on
the original data $X_1,\cdots,X_n$. 
The number $M$ denotes the number of the initializations.
Later we will  assume that $\hat{\Pi}_n$ converges to a fixed distribution $\Pi$ when the sample size increases to infinity.
Note that different initialization methods lead to a different distribution of $\hat{\Pi}_n$.
As an example, in the Gaussian mixture model, we often draw 
random points from the observed data
as
the initial centers of each mixture component. 
In this case, $\hat{\Pi}_n$ can be viewed as the empirical distribution function.

By applying the gradient ascent to each of the $M$ initial parameters, 
we obtain a collection of stationary points
$
\hat{\gamma}_{\theta^\dagger_1}(\infty),\cdots,\hat{\gamma}_{\theta^\dagger_M}(\infty).
$ 
The estimator 
is the one that maximizes the likelihood function
so it
can be written as
\begin{equation}
\hat{\theta}_{n,M} = {\sf argmax}_{\hat{\gamma}_{\theta^\dagger_\ell}(\infty)} \left\{\hat{L}_n\left(\hat{\gamma}_{\theta^\dagger_\ell}(\infty)\right): \ell =1,\cdots, M\right\}.
\end{equation}

In practice, we often treat $\hat{\theta}_{n,M}$
as $\hat{\theta}_{MLE}$ and use it to make inferences about the underlying population. 
However, unless 
the likelihood function is concave,
there is no guarantee that $\hat{\theta}_{n,M} = \hat{\theta}_{MLE}$. 
Thus, our inferences and conclusions, which were based on treating $\hat{\theta}_{n,M}$ as the MLE,
could be problematic. 

%

\begin{algorithm}
\caption{Gradient ascent with random initialization}
\label{fig::alg::grad}
\begin{algorithmic}
\State 1. Choose $\theta^\dagger$ randomly from a distribution $\hat{\Pi}_n$.
\State 2. Starting with $\theta^\dagger$, apply a gradient ascent algorithm to $\hat{L}_n$
until it converges. 
Let 
$\hat{\gamma}_{\theta^\dagger}(\infty)$
be the stationary point. 
\State 3. Repeat the above two steps $M$ times, leading to 
$$
\hat{\gamma}_{\theta^\dagger_1}(\infty),\cdots,\hat{\gamma}_{\theta^\dagger_M}(\infty). 
$$
\State 4. Compute the corresponding log-likelihood value of each of them:
$$
\hat{L}_n\left(\hat{\gamma}_{\theta^\dagger_1}(\infty)\right),\cdots, \hat{L}_n\left(\hat{\gamma}_{\theta^\dagger_M}(\infty)\right).
$$
\State 5. Choose the final estimator as 
$$
\hat{\theta}_{n,M} = {\sf argmax}_{\hat{\gamma}_{\theta^\dagger_\ell}(\infty)} \left\{\hat{L}_n\left(\hat{\gamma}_{\theta^\dagger_\ell}(\infty)\right): \ell =1,\cdots, M\right\}.
$$
\end{algorithmic}
\end{algorithm}

\subsection{Population-level Analysis}	\label{sec::pop}

To better understand the inferences we make when treating $\hat{\theta}_{n,M}$
as $\hat{\theta}_{MLE}$,
we start with a population level analysis over $\hat{\theta}_{n,M}$.
The population version of the gradient flow $\hat{\gamma}_{\theta^\dagger}$ starting at $\theta^\dagger$ is
a gradient flow $\gamma_{\theta^\dagger}(t)$ such that
$$
\gamma_{\theta^\dagger}(0) = \theta^\dagger, \quad \gamma_{\theta^\dagger}'(t) = \nabla L(\gamma_{\theta^\dagger}(t)).
$$
The destination of this gradient flow, $\gamma_{\theta^\dagger}(\infty) = \lim_{t\rightarrow \infty}\gamma_{\theta^\dagger}(t)$, 
is one of the critical points of $ L(\theta)$.

For a critical point $m$ of $L$, we define the basin of attraction of $m$
as the collection of initial points where the gradient flow converges to $m$:
$$
\mathcal{A}(m) = \{\theta\in\Theta: \gamma_{\theta}(\infty) = m\}.
$$
Namely, $\mathcal{A}(m)$ is the region where the (population) gradient ascent flow
converges to critical point $m$.

Throughout this paper, we assume that $L$ is a Morse function and $L$
has a continuous second derivatives.
That is,
critical points of $L$ are non-degenerate (well-separated). 
By the stable manifold theorem  (e.g., Theorem 4.15 of \citealt{banyaga2013lectures}), 
$\mathcal{A}(m)$ is a $k$-dimensional manifold, where $k$ is the number of negative 
eigenvalues of $H(m)$, the Hessian matrix of $L(\cdot)$ evaluated at $m$. 
Thus, the Lebesgue measure of $\mathcal{A}(m)$ is non-zero
only when $m$ is a local maximum.
Because of this fact, 
we restrict our attention to local maxima and ignore other types of critical points;
a randomly chosen initial point has probability zero of falling within
the basin of attraction of a critical point that is not a local maximum
when $\hat{\Pi}_n$ is continuous.
Note that a similar argument also appears in 
\cite{lee2016gradient} and \cite{panageas2016gradient}.
Let $\cS$ be the collection of local maxima
with
\begin{align*}
\cS &= \{m_1,\cdots, m_K\},\\
L(m_1)&\geq L(m_2)\geq \cdots L(m_K),
\end{align*}
where $K$ is the number of local maxima.
The population MLE is $m_1 = \theta_{MLE}$.

Figure~\ref{fig::ex02} provides an illustration of the critical points
and the basin of attraction. 
The left panel displays the contour plot of a log-likelihood function. 
The three solid black dots are the local maxima ($m_1,m_2$, and $m_3$),
the three crosses are the critical points,
and the empty box indicates a local minimum. 
In the middle panel, we display gradient flows from
some starting points. 
The right panel shows the corresponding
basins of attraction ($\mathcal{A}(m_1)$, $\cA(m_2)$, and $\cA(m_3)$). 
Each color patch is a basin of attraction of a local maximum.

\begin{figure}
\center
\includegraphics[width=1.5in]{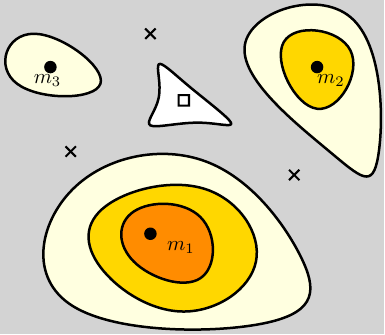}
\includegraphics[width=1.5in]{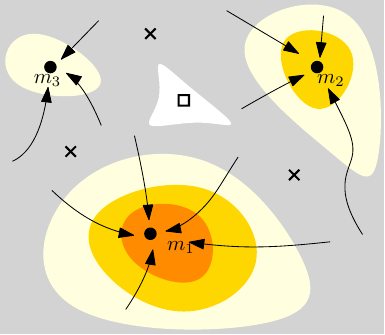}
\includegraphics[width=1.5in]{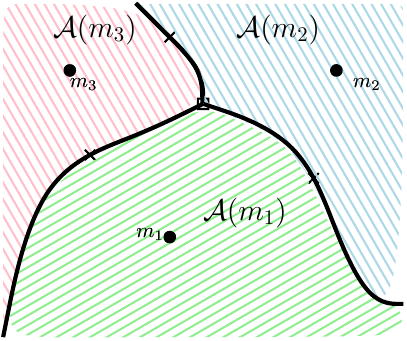}
\caption{
An illustration of critical points and basin of attractions. 
{\bf Left:} the colored contours show the level sets of the log-likelihood function. 
The three sold black dots are the locations of local maxima ($m_1,m_2$, and $m_3$);
the crosses are the locations of saddle points;
and the empty box indicates the location of a local minimum.
{\bf Middle:} gradient flows with different starting points. 
Each arrow indicates the gradient flow starting from an initial point that
ends at a local maximum. 
{\bf Right:}
Basins of attractions of local maxima. 
Each color patch is the basin of attraction of a local maximum. 
Note that by the Morse theory, saddle points and local minima will be
on the boundary of basins of attraction of local maxima.
}
\label{fig::ex02}
\end{figure}

For the $\ell$-th local maximum,
we define the probability 
\begin{equation}
q^\pi_\ell =\Pi(\mathcal{A}(m_\ell)) = \int_{\mathcal{A}(m_\ell)} d\Pi(\theta),
\label{eq::qq}
\end{equation}
where $\Pi$ is the population version of $\hat{\Pi}_n$ (i.e., $\hat{\Pi}_n$ converges to $\Pi$ in the sense of assumption (A4) in the Appendix \ref{sec::assumption::sample}). 
$q^\pi_\ell$ is
the probability that the initialization method chooses an initial point within the basin of attraction
of $m_\ell$.
Namely, $q^\pi_\ell$ is the chance that the gradient ascent flow converges to $m_\ell$ from a random initial point.
Note that we add a superscript $\pi$ to $q^\pi_\ell$
to emphasize the fact that this quantity depends on how we choose the initialization approach.
Varying the initialization method leads to different probabilities $q^\pi_\ell$.

We define a `cumulative' probability of the top $N$
local maxima as
$
Q^\pi_N = \sum_{\ell = 1}^N q^\pi_\ell,
$
where $q^\pi_\ell$ is defined in equation \eqref{eq::qq}.
The quantity $Q^\pi_N$
plays a key role in our analysis because 
it is the probability 
of seeing 
one of the top $N$ local maxima
after applying the gradient ascent method with a single initialization.
Note that 
$
q^\pi_1 = Q^\pi_1 
$
is the probability of selecting an initial parameter value within the basin of attraction
of the MLE,
which is also the probability of obtaining the MLE with only one initialization.
Later we will give a bound on the number of initializations we need in order to obtain
the MLE with a high probability (Proposition \ref{prop::Mnumber}).

Because
the estimator $\hat{\theta}_{n,M}$ is constructed by $M$ initializations,
we introduce a population version of it.
Let $\theta^\dagger_1,\cdots, \theta^\dagger_M\sim \Pi$
be the initial points 
and let $\gamma_{\theta^\dagger_1}(\infty),\cdots, \gamma_{\theta^\dagger_M}(\infty)$
be the corresponding destinations. 
The quantity
$$
\bar{\theta}_{M}= 
{\sf argmax}_{\gamma_{\theta^\dagger_\ell}(\infty)} \left\{L\left(\gamma_{\theta^\dagger_\ell}(\infty)\right): \ell =1,\cdots, M\right\}.
$$
is the population analog of $\hat{\theta}_{n,M}$.

Due to the fact that $\bar{\theta}_{M}$ is constructed by $M$ initializations,
it may not be the population MLE $\theta_{MLE}$.
However, it is still the best among all these $M$ candidates
so it should be one of the top local maxima in terms of the likelihood value.  
Let $\cS_N = \{m_1,\cdots, m_N\}$
be the top $N$ local maxima, where $N\leq K$ and $L(m_1)\geq \cdots \geq L(m_K)$. 
By simple algebra, we have
$$
P(\bar{\theta}_{M}\in\cS_N) = 1- P(\bar{\theta}_{M}\notin\cS_N) = 1- (1-Q^\pi_N)^M.
$$
Given any fixed number $N$,
such a probability converges to $1$ as $M\rightarrow \infty$
when $\Pi$ covers the basin of attraction of every local maximum. 
Therefore, we can pick $N=1$ and choose $M$ sufficiently large
to ensure that we obtain the MLE with an overwhelming probability. 
However, when $M$
is finite, 
the chance of obtaining the population MLE could be slim.


To acknowledge the effect from the initializing $M$ times,
we introduce a new quantity called the \emph{precision level}, denoted as $\delta>0$. 
Given a precision level $\delta$, 
we define an integer
$$
N^\pi_{M,\delta} = \min \{N: (1-Q^\pi_N)^M\leq \delta\}
$$
that can be interpreted as:
with a probability of at least $1-\delta$,
$\bar{\theta}_{M}$ is among
the top $N^\pi_{M,\delta}$ local maxima. 
We further define
\begin{equation}
\cS^\pi_{M,\delta} = \cS_{N^\pi_{M,\delta}},
\label{eq::precisionset}
\end{equation}
which satisfies
$$
P(\bar{\theta}_{M}\in \cS^\pi_{M,\delta}) \geq 1-\delta.
$$
Namely, with a probability of at least $1-\delta$,
$\bar{\theta}_{M}$ recovers one element of $\cS^\pi_{M,\delta}$.
We often want $\delta$ to be small because later we will show
that
common CIs
have an asymptotic coverage $1-\alpha-\delta$ 
containing an element of $\cS^\pi_{M,\delta}$ (Section~\ref{sec::CI}). 
If we want to control the type-I error to be, say $5\%$, we
may want to choose $\alpha=2.5\%$ and $\delta=2.5\%$.

 \begin{figure}
\center
\includegraphics[width=2in]{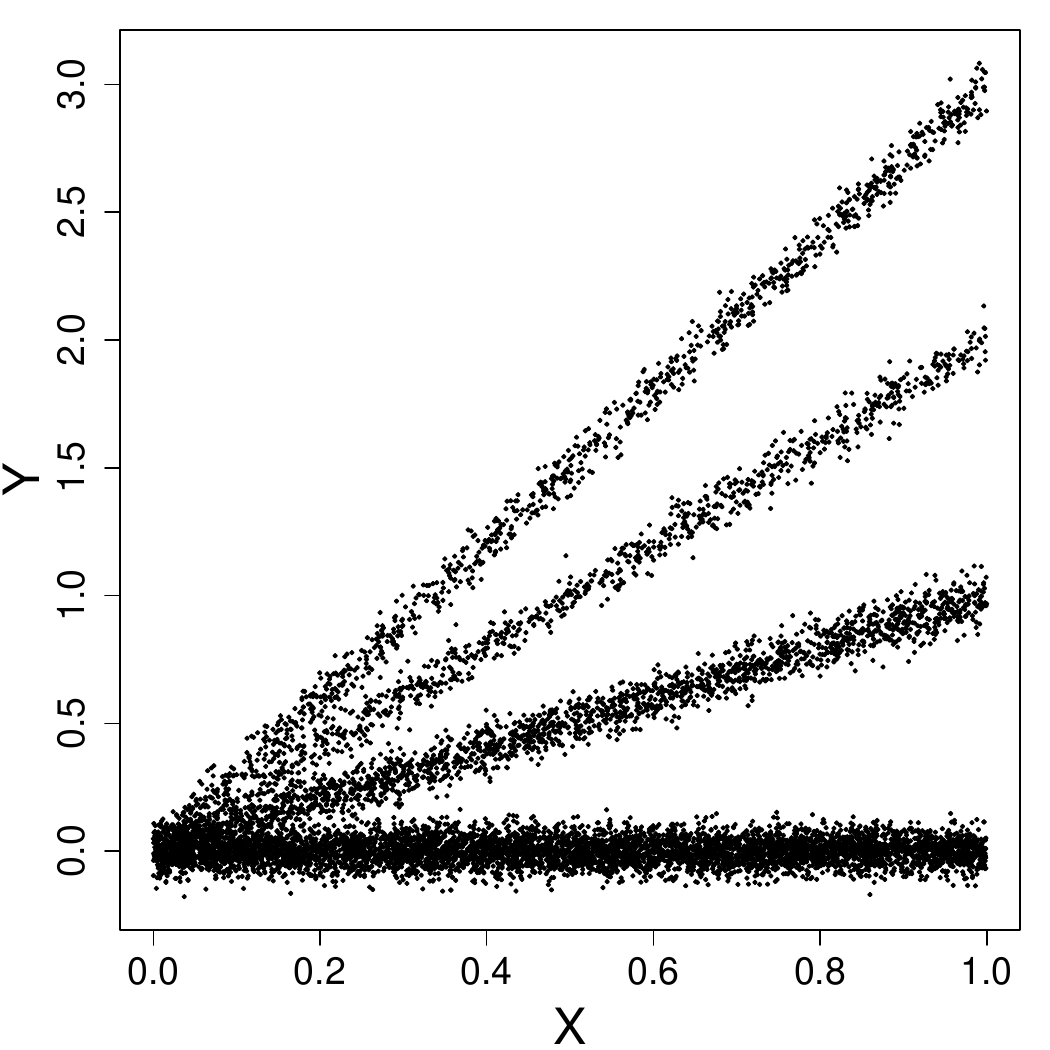}
\includegraphics[width=2in]{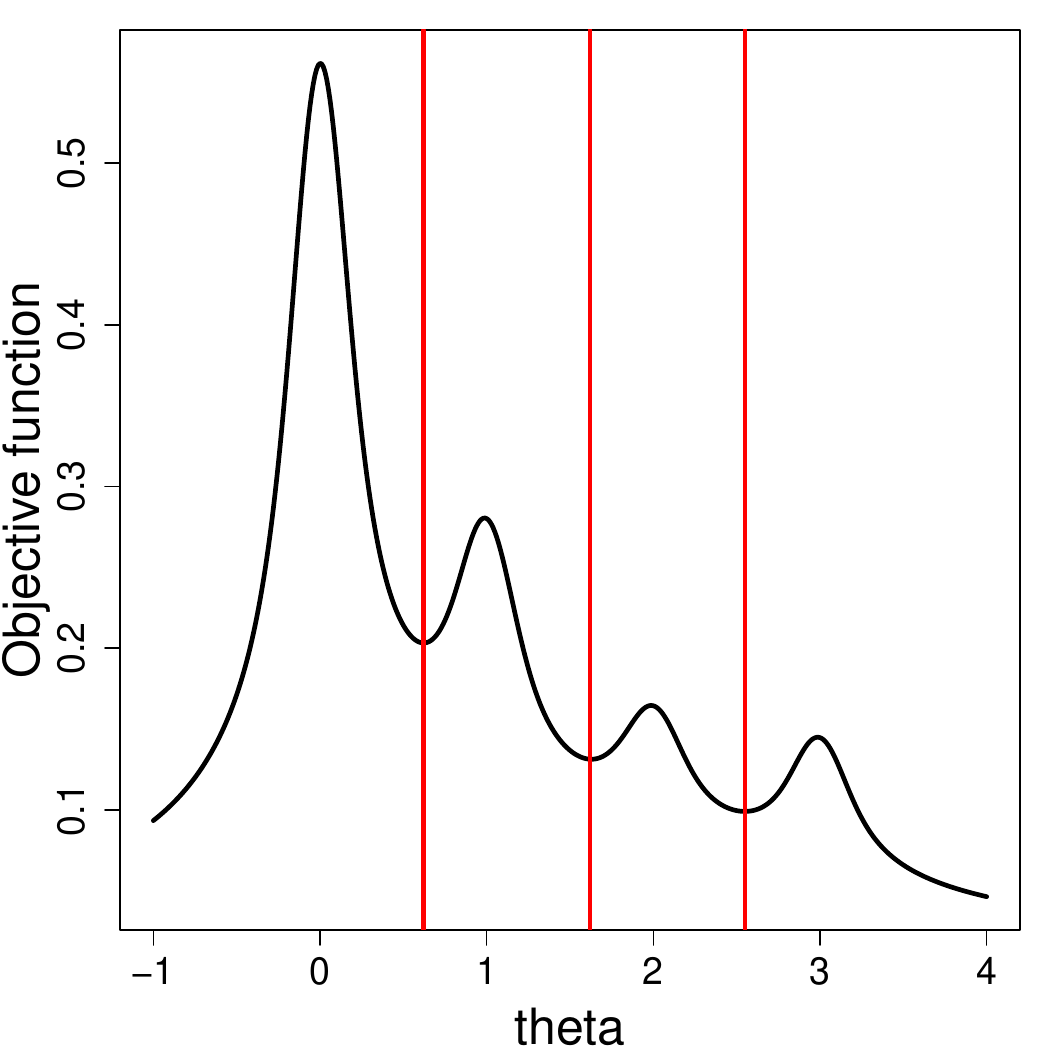}
\caption{
A modal regression method to the mixture regression problem.
{\bf Left:} we display a data generated by a 4-mixture regression model. 
{\bf Right:} the objective function of the linear modal regression as a function of the parameter $\theta$.
The three red vertical lines display the boundary of basins of attraction of the four local modes.
}
\label{fig::modal02}
\end{figure}

\begin{example}[Modal regression]
To illustrate the idea, consider a regression problem where we observe $(X_1,Y_1),\cdots, (X_n,Y_n)$
and the goal is to fit a linear model of the conditional mode of $Y$ given $X$. 
This problem is called linear modal regression \citep{yao2014new, feng2017statistical,chen2018modal}.
Suppose that our data is generated by the following mixture model (intercept is 0):
$$
Y = \theta X +\epsilon,\quad X\sim {\sf Uni}[0,1],\quad \epsilon\sim N(0,0.05^2).
$$
However, $\theta$ is also random such that $P(\theta = 0)=0.6, P(\theta=1)=0.2, P(\theta=2)= P(\theta=3) = 0.1$.
Namely, it is a mixture of 4 component regression problem and the left panel of Figure~\ref{fig::modal02}
shows a scatter plot of the data.
It is known that \citep{chen2018modal}
if we are looking for the conditional (global) mode of $Y$ given $X$,
we can maximize the following objective function:
$$
\ell_n(\theta) = \frac{1}{nh}\sum_{i=1}^n K\left(\frac{Y_i-\theta X_i}{h}\right).
$$
The right panel plots $\ell_n(\theta)$ when we choose $h=0.1$ and $K$ to be the Gaussian kernel.
The 4 local modes correspond to the 4 mixture components. 
The global mode is $\theta_{MLE} = m_1=0$, which corresponds to the component with the highest proportion.
There are three other local modes $m_2=1, m_3=2, m_4=3$.
The three vertical lines in the right panel indicate the boundaries of basins of attraction of different modes;
they correspond to $\theta=0.62, 1.62, 2.55$.
So the basin corresponding to the global mode is $\mathcal{A}(m_1) = (-\infty, 0.62)$.

Suppose that we randomly initialize the starting point within $[-1,4]$ (i.e., $\Pi\sim {\sf Uni}[-1,4]$),
then $q^\pi_1 = \Pi((-\infty, 0.62)) = 1.62\times0.2 =  0.324$
and $q^{\pi}_2=0.2, q^{\pi}_3 =0.186, q^{\pi}_4= 0.29$.
Therefore, we only have around $32\%$ chance of getting the actual maximizer
if we only initialize it once. 
Suppose that $\delta=1\%$ and we randomly initialize the program $4$ times, 
we obtain $N^{\pi}_{4, 0.01} = 3$ so $\cS^\pi_{4,0.01} = \{m_1,m_2,m_3\}$.
To ensure $\cS^\pi_{M,0.01} = \{m_1\}$, we need at least $M=12$ random initializations (this corresponds to $(1-0.324)^{12} \approx 0.0091 <\delta = 0.01$).


\end{example}

The above example is a particularly simple case (one dimensional so we can clearly see the landscape of the objective function),
so we can work out the minimal number $M$ to guarantee a probability of at least $1-\delta$ of finding the global mode (i.e., $\cS^\pi_{M,\delta} = \{\theta_{MLE}\}$). 
In Section~\ref{sec::rule}, we propose a practical rule of choosing $M$
based on our judgement of the problem.
In what follows, we provide a theoretical upper bound of the minimal number using the curvature around the global mode.
Let $\nabla_\nu$ denotes the directional derivative with respect to $\nu\in \mathbb{S}_{d}$, where
$\mathbb{S}_{d} =\{\nu\in\R^d: \|\nu\|=1\}$ is the collection of all unit vectors in $d$ dimensions.
When either $M$ or $\delta$ increase, the set $\cS^\pi_{M,\delta}$ may shrink. 
Under smoothness conditions of $L$,
a sufficiently large $M$ ensures $\cS^\pi_{M,\delta} = \{\theta_{MLE}\}$
as described in the following proposition.

\begin{proposition}
Assume that $\Theta$ is a compact parameter space. 
Assume all eigenvalues of $H(\theta_{MLE}) = \nabla \nabla L(\theta_{MLE})$ 
are less than $-\lambda_0$ for some positive constant $\lambda_0$
and 
$$
\sup_{\theta\in\Theta}\sup_{\nu_1,\nu_2,\nu_3\in \mathbb{S}_{d}}\left|\nabla_{\nu_1} \nabla_{\nu_2}\nabla_{\nu_3} L(\theta)\right| < c_3.
$$
Moreover, assume that $\theta_{MLE}$ is unique within $\Theta$.
Then for every $\delta>0$,
$
P\left(\cS^\pi_{M,\delta} = \{\theta_{MLE}\}\right) \geq 1-\delta
$
when 
$
M\geq \frac{\log \delta}{\log \left(1-\Pi\left(B(\theta_{MLE}, \frac{\lambda_0}{c_3})\right)\right)},
$
where $B(\theta,r) = \{x\in\Theta: \|x-\theta\|_2 \leq r\}$  is the ball centered at $\theta$
with a radius $r$.
\label{prop::Mnumber}
\end{proposition}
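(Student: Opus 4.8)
The plan is to reduce the set-valued claim to a single deterministic inequality and then prove one geometric lemma that does all the real work. First I would unwind the definitions: by construction $\cS^\pi_{M,\delta} = \cS^\pi_{N^\pi_{M,\delta}}$ with $N^\pi_{M,\delta} = \min\{N : (1-Q^\pi_N)^M \le \delta\}$, and since $\cS^\pi_1 = \{m_1\} = \{\theta_{MLE}\}$, the claim $\cS^\pi_{M,\delta} = \{\theta_{MLE}\}$ amounts to $N^\pi_{M,\delta} = 1$, i.e.\ to $(1-Q^\pi_1)^M \le \delta$. Because $Q^\pi_1 = q^\pi_1 = \Pi(\cA(\theta_{MLE}))$, everything comes down to lower bounding $\Pi(\cA(\theta_{MLE}))$ and checking that the stated lower bound on $M$ forces $(1-Q^\pi_1)^M \le \delta$. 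Uniqueness of $\theta_{MLE}$ guarantees that $m_1$ is the unique top maximum, so that $\cS^\pi_1$ really is the singleton $\{\theta_{MLE}\}$.

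The heart of the argument is the geometric inclusion $B(\theta_{MLE}, \lambda_0/c_3) \subseteq \cA(\theta_{MLE})$, which I would establish by a Lyapunov argument along the population gradient flow. Writing $u(t) = \gamma_{\theta^\dagger}(t) - \theta_{MLE}$ and $V = \tfrac12\|u\|^2$, and using $\nabla L(\theta_{MLE})=0$, Taylor's theorem gives $\nabla L(\gamma) = H(\theta_{MLE})\,u + R(u)$. Differentiating $V$ along the flow,
\[
\frac{d}{dt}V = \big\langle u, \nabla L(\gamma)\big\rangle = u^\top H(\theta_{MLE})u + \big\langle u, R(u)\big\rangle \le -\lambda_0\|u\|^2 + \tfrac{c_3}{2}\|u\|^3 ,
\]
where the first term uses that every eigenvalue of $H(\theta_{MLE})$ lies below $-\lambda_0$, and the second uses that $\langle u, R(u)\rangle$ is (half of) a third directional derivative of $L$ in the direction $u/\|u\|$ and is therefore bounded by $\tfrac{c_3}{2}\|u\|^3$. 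On the ball $\|u\|\le \lambda_0/c_3$ the right-hand side is at most $-\tfrac{\lambda_0}{2}\|u\|^2 = -\lambda_0 V$, so $V$ decays exponentially, the trajectory never leaves the ball, and $\gamma_{\theta^\dagger}(t)\to\theta_{MLE}$. Hence every starting point of the ball lies in $\cA(\theta_{MLE})$.

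Assembling the pieces is then routine. The inclusion gives $Q^\pi_1 = \Pi(\cA(\theta_{MLE})) \ge \Pi(B(\theta_{MLE},\lambda_0/c_3))$, so $(1-Q^\pi_1)^M \le (1-\Pi(B(\theta_{MLE},\lambda_0/c_3)))^M$. Taking logarithms and noting that $\log(1-\Pi(B(\theta_{MLE},\lambda_0/c_3)))<0$, the stated bound on $M$ is exactly the condition under which $(1-\Pi(B(\theta_{MLE},\lambda_0/c_3)))^M \le \delta$; this forces $N^\pi_{M,\delta}=1$ and hence $\cS^\pi_{M,\delta}=\{\theta_{MLE}\}$. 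Under the hypotheses this conclusion is deterministic, so the probability in question is in fact one and the claimed lower bound $1-\delta$ follows \emph{a fortiori}; combined with the defining property $P(\bar\theta_{M}\in\cS^\pi_{M,\delta})\ge 1-\delta$ it also yields $P(\bar\theta_{M}=\theta_{MLE})\ge 1-\delta$.

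I expect the only delicate step to be the Lyapunov estimate, specifically justifying the bound on $\langle u, R(u)\rangle$ from the directional third-derivative hypothesis and ensuring that both the segment joining $\theta_{MLE}$ to $\gamma_{\theta^\dagger}(t)$ and the entire trajectory stay inside $\Theta$, where that bound is valid. Compactness of $\Theta$ together with the global supremum bound on third derivatives, and the fact that $V$ is non-increasing so the flow remains in the ball, are what I would use to close this gap.
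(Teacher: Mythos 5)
Your proof is correct, and it reaches the same two pillars as the paper's proof --- the geometric inclusion $B(\theta_{MLE},\lambda_0/c_3)\subset \cA(\theta_{MLE})$ followed by the reduction $(1-Q^\pi_1)^M\le(1-\Pi(B(\theta_{MLE},\lambda_0/c_3)))^M\le\delta$ and the logarithm manipulation --- but it proves the key inclusion by a genuinely different technique. The paper expands the \emph{Hessian} around $\theta_{MLE}$: it shows $\|H(x)-H(\theta_{MLE})\|_2\le c_3\|x-\theta_{MLE}\|\le\lambda_0$ on the ball, invokes Weyl's inequality to conclude $\lambda_{\max}(H(x))<0$ there, deduces that $L$ is strictly concave on the (convex) ball, and from concavity concludes that the ball lies in the basin of attraction. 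You instead expand the \emph{gradient} and run a Lyapunov/Gr\"onwall argument on the flow itself: $\tfrac{d}{dt}V\le -\lambda_0\|u\|^2+\tfrac{c_3}{2}\|u\|^3\le-\lambda_0 V$ on the ball, so the trajectory is trapped and converges exponentially to $\theta_{MLE}$. The two routes use the hypotheses identically (third-derivative bound to compare with $H(\theta_{MLE})$, spectral gap $\lambda_0$), and your remainder estimate $\langle u,R(u)\rangle\le\tfrac{c_3}{2}\|u\|^3$ is justified by the same integral-form Taylor bound the paper applies to the Hessian. What your version buys is that the flow-invariance and convergence steps are explicit: the paper's final jump from ``$L$ concave on the ball'' to ``ball $\subset$ basin of attraction'' tacitly requires exactly the monotonicity-of-distance argument you carry out, so your proof is arguably the more self-contained of the two. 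Your closing observations --- that $\cS^\pi_{M,\delta}$ is a deterministic set so the probability statement holds with probability one, and that the substantive probabilistic content is $P(\bar\theta_M=\theta_{MLE})\ge 1-\delta$ --- are also correct and clarify a point the paper's statement leaves implicit; likewise, flagging that the ball and the trajectory must remain in $\Theta$ for the third-derivative bound to apply is a legitimate caveat that the paper's own proof shares and does not address.
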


Proposition \ref{prop::Mnumber} describes a desirable scenario: when $M$ is sufficiently large,
with a probability of at least $1-\delta$ the set $\cS^\pi_{M,\delta}$
contains only the MLE. 
%

%

If the uniqueness assumption is violated, 
i.e., there are multiple parameters
attaining the maximum value of the likelihood function,
then the set $\cS^\pi_{M,\delta}$ converges to the collection of all these
maxima in probability when $M\rightarrow \infty$. 
%
One common scenario in which we encounter this situation is in mixture models where
permuting some parameters results in the same model (this is known as the label switching problem in \citealt{titterington1985statistical}).

\subsection{Sample-level Analysis} 	\label{sec::sample}

In this section, we  show that $\hat{\theta}_{n,M}$
converges to an element of $\mathcal{C}^\pi_{M,\delta}$ with a probability at least $1-\delta$. 
We first introduce some generic assumptions.
We define the projection distance $d(x,A) = \inf_{y\in A}\|x-y\|$
for any point $x$ and any set $A$. 
For a set $A$ and a scalar $r$, define $A\oplus r = \{x: d(x,A)\leq r\}$.

We start with a useful lemma
which states that with a high probability (a probability tending to $1$ as the sample size increases), 
the local maxima of $\hat{L}_n$
and the local maxima of $L$
have a one-to-one correspondence.
Denote the collection of local maxima of $\hat{L}_n$ as
$\hat{\cS} = \{\hat{m}_1,\cdots, \hat{m}_{\hat{K}}\}$
such that $\hat{L}_n(\hat{m}_1)\geq\cdots\geq\hat{L}_n(\hat{m}_{\hat{K}}),$
where $\hat{K}$ is the number of local maxima of $\hat{L}_n$.
Note that 
by definition, $\hat{m}_1 = \hat{\theta}_{MLE}$.
Let  
\begin{align*}
\epsilon_{1,n}& = \sup_{\theta\in\Theta}\left\|\nabla \hat{L}_n(\theta) - \nabla L(\theta)\right\|_{\max},\\
\epsilon_{2,n}& = \sup_{\theta\in\Theta}\left\|\nabla\nabla \hat{L}_n(\theta) - \nabla\nabla L(\theta)\right\|_{\max},
\end{align*}
be the bounds on gradient and Hessian.

\begin{lemma}
Assume (A1) and (A3) in Appendix \ref{sec::assumption::sample}. 
Then there exists a constant $C_0$ such that
when $\epsilon_{1,n}, \epsilon_{2,n}<C_0$, 
%
$\hat{K}=K$
and for every $\ell=1,\cdots,K$, 
$$
\|\hat{m}_\ell-m_\ell\| < \min\left\{\min_{j\neq \ell} \|\hat{m}_\ell-m_j\|,\min_{j\neq \ell} \|\hat{m}_j-m_\ell\|\right\}.
$$
\label{lem::sep}
\end{lemma}

This result appears in many places in the literature
so we will omit the proof in this exposition. Interested readers are encourage to consult \cite{chazal2014robust,mei2016landscape,chen2017statistical}.
Note that in most cases, we in fact have a stronger result than what Lemma~\ref{lem::sep} suggests -- 
not only is there a one-to-one correspondence between 
a pair of estimated and population local maxima, but also between pairs of other types of critical points.



The following theorem provides a bound on the distance from $\hat{\theta}_{n,M}$ to $\cS^{\pi}_{M,\delta}$.

\begin{theorem}
Assume (A1-4) in Appendix \ref{sec::assumption::sample} and let $\epsilon_{3,n}, \epsilon_{4,n}$
be the two bounds in Assumption (A4) in Appendix \ref{sec::assumption::sample}. 
Let $C_0$ be the constant in Lemma~\ref{lem::sep} and
$$
\xi_n = 
P(\epsilon_{1,n}\geq C_0\mbox { or }\epsilon_{2,n}\geq C_0).
$$
When $\epsilon_{1,n},\cdots, \epsilon_{4,n}$ are non-random,
with a probability of at least $1-\delta-\xi_n+O(\epsilon_{1,n}+\epsilon_{3,n}+\epsilon_{4,n})$,
$$
d\left(\hat{\theta}_{n,M} ,\cS^{\pi}_{M,\delta}\right)  = O\left(\epsilon_{1,n}\right).
$$
Moreover, when $\epsilon_{1,n},\cdots, \epsilon_{4,n}$ are random and there exists $\eta_{n,j}(t)$ such that
$$
P(\epsilon_{j,n}>t) \leq \eta_{n,j}(t)\quad \mbox{for } j=1,\cdots, 4,
$$ 
then for any sequence $t_n\rightarrow 0$,
with a probability of at least $1-\delta-\xi_n + O(t_n) -\sum_{j=1,3,4}\eta_{n,j}(t_n)$,
$$
d\left(\hat{\theta}_{n,M} ,\cS^{\pi}_{M,\delta}\right)  = O_P\left(\epsilon_{1,n}\right).
$$

\label{thm::precisionset}
\end{theorem}

In the first claim ($\epsilon_{1,n},\cdots, \epsilon_{4,n}$ are non-random),
the probability 
comes from the randomness of initializations. 
In the second claim ($\epsilon_{1,n},\cdots, \epsilon_{4,n}$ are random),
the probability statement accounts for both the randomness of initializations
and $\epsilon_{1,n},\cdots, \epsilon_{4,n}$.

In many applications, the probability $\xi_n$ is very small because that statement 
is true when both $\epsilon_{1,n},\epsilon_{2,n}$ are less than a fixed threshold (see Lemma 16 of \citealt{chazal2014robust}). 
Further, 
the chance that
these two quantities are less than a fixed number has a probability 
of $1-e^{-C\cdot a_n}$
for some $a_n\rightarrow \infty$ as $n\rightarrow \infty$ and $C>0$
so often $\xi_n$ can be ignored.

When we made further assumptions of the likelihood function  to obtain
a $\sqrt{n}$ rate (assumptions (A3L) and (A4L) in Appendix \ref{sec::assumption::sample}),
we have the following result on a concrete rate.

\begin{theorem}
Assume (A1), (A2)
(A3L), and (A4L) in Appendix \ref{sec::assumption::sample}.
Then when $n\rightarrow \infty$, with a probability of at least 
$1-\delta-O\left(\sqrt{\frac{\log n}{n}}\right)$, 
$$
d\left(\hat{\theta}_{n,M} ,\cS^{\pi}_{M,\delta}\right)  = O_P\left(\frac{1}{\sqrt{n}}\right).
$$
\label{thm::likelihood}
\end{theorem}

Theorem~\ref{thm::likelihood} bounds the distance from
the estimator to
an element of $\cS^\pi_{M,\delta}$
when $M$ initializations are used. 
Note that if $M$ is sufficiently large (so Proposition \ref{prop::Mnumber} holds),
then we can claim that $\|\hat{\theta}_{n,M}-\theta_{MLE}\|= O_P\left(\frac{1}{\sqrt{n}}\right)$
with a probability around $1-\delta$.

\section{Statistical Inference}	\label{sec::inference}

In this section
we study the procedure of making inferences when the likelihood function has multiple maxima. 

To simplify the problem of constructing CIs,
we focus on constructing CIs of $\tau_{MLE} = \tau(\theta_{MLE})$,
where $\tau:\Theta\rightarrow \R$ is a known function.
We estimate $\tau_{MLE} $ using
$\hat{\tau}_{MLE} = \tau(\hat{\theta}_{n,M})$.
Recall that $\cS^{\pi}_{M,\delta}$ from equation \eqref{eq::precisionset} is the top local modes that we 
can discover with a precision level $1-\delta$ and $M$ initializations and $\Pi$
initialization method. 
Moreover, we define
$$
\tau\left(\cS^\pi_{M,\delta}\right) = \{\tau(\theta): \theta\in \cS^{\pi}_{M,\delta}\}.
$$
The set $\tau\left(\cS^\pi_{M,\delta}\right)$ will be the population quantity
that the
CIs are covering.


%
%
%
%
%


\subsection{Normal Confidence Interval}	\label{sec::CI}

A naive approach to constructing a CI
is to estimate the variance of $\tau(\hat{\theta}_{n,M})$
and invert it into a CI.
Such CIs are in one-dimensional space and are based on the asymptotic normality of the MLE \citep{redner1984mixture}:
$$
\sqrt{n}\left(\hat{\theta}_{MLE} - \theta_{MLE}\right)\overset{D}{\rightarrow} N(0, \sigma^2),
$$
for some $\sigma^2>0$.
In practice, we only have access to $\hat{\theta}_{n,M}$, not $\hat{\theta}_{MLE}$, so
we replace $\hat{\theta}_{MLE}$ by $\hat{\theta}_{n,M}$ and 
construct a CI using the normality. 
This is perhaps the most common approach to the construction of a CI
and the representation of the error of estimation 
(see Chapter 2.15 and 2.16 in \citealt{mclachlan2004finite}
for examples of mixture models). 
However,
we will show that when the likelihood function has multiple local maxima,
this CI undercovers for $\tau_{MLE}$ and has $1-\alpha-\delta$ coverage
for an element in $\tau(\cS_{M,\delta}^\pi)$.

To fully describe the construction of this normal CI,
we begin with an analysis of the asymptotic covariance of the MLE.
Let $S(\theta) = \nabla L(\theta)$ be the score function
and $H(\theta) = \nabla S(\theta)$ be the Hessian matrix of the log-likelihood function.
Moreover, let $S(\theta|X_i) = \nabla \log p(X_i;\theta)$ and $H(\theta|X_i)= \nabla S(\theta|X_i)$.
The MLE $\hat{\theta}_{MLE}$ has an asymptotic covariance matrix
$$
{\sf Cov}(\hat{\theta}_{MLE}) = H(\theta_{MLE})^{-1} \E(S(\theta_{MLE}|X_1)S(\theta_{MLE}|X_1)^T)H(\theta_{MLE})^{-1} + o(1). 
$$
Note that under regularity conditions, 
$$H(\theta_{MLE}) = -\E(S(\theta_{MLE}|X_1)S(\theta_{MLE}|X_1)^T) = -I(\theta_{MLE})$$
is the Fisher's information matrix,
which further implies
$
{\sf Cov}(\hat{\theta}_{MLE}) = I^{-1}(\theta_{MLE}). 
$
However, when the model is mis-specified, $H(\theta_{MLE})\neq I(\theta_{MLE}) = \E(S(\theta_{MLE}|X_1)S(\theta_{MLE}|X_1)^T) $, and in this case,
we cannot use the information matrix to construct a normal CI.

Using the delta method \citep{van1998asymptotic,wasserman2006all},
the variance of $\tau(\hat{\theta}_{MLE})$ is
$$
{\sf Var}\left(\tau(\hat{\theta}_{MLE})\right) = g_\tau^T(\theta_{MLE}) {\sf Cov}(\hat{\theta}_{MLE})g_\tau(\theta_{MLE}),
$$
where $g_\tau(\theta) = \nabla \tau(\theta)$.

Thus, 
given an estimator $\hat{\theta}_{n,M}$,
we can estimate the covariance matrix using
\begin{equation}
\begin{aligned}
\hat{{\sf Cov}}(\hat{\theta}_{n,M})&=\hat{H}_n(\hat \theta_{n,M})^{-1} \left(\frac{1}{n}\sum_{i=1}^nS(\hat\theta_{n,M}|X_i)S(\hat\theta_{n,M}|X_i)^T\right)\hat{H}_n(\hat\theta_{n,M})^{-1},\\
\hat{H}_n(\theta)& = \frac{1}{n}\sum_{i=1}^n H(\theta|X_i).
\end{aligned}
\label{eq::acov}
\end{equation}
And the CI is
\begin{equation}
C_{n,\alpha} = \left\{t: \sqrt{n}\left|\frac{t-\tau(\hat{\theta}_{n,M})}{g_\tau^T(\hat\theta_{n,M}) \hat{\sf Cov}(\hat{\theta}_{n,M})g_\tau(\hat\theta_{n,M})}\right|\leq z_{1-\alpha/2}\right\},
\label{eq::CI_chisq}
\end{equation}
where $z_{\alpha}$ is the $\alpha$ quantile of a standard normal distribution.
Note that under suitable assumptions, one can also use Fisher's information matrix or the empirical information matrix 
to replace $\hat{{\sf Cov}}(\hat{\theta}_{n,M})$.

However, because $\hat{\theta}_{n,M}$ is never guaranteed to be the MLE, 
$C_{n,\alpha}$ may not contain
the population MLE with the right coverage.
In what follows, we show that 
$C_{n,\alpha}$ has an asymptotic $1-\alpha-\delta $ coverage of covering an element of $\tau\left(\cS^\pi_{M,\delta}\right)$
and $1-\alpha-(1-q^{\pi}_1)^M$ coverage for covering the MLE, where $q^\pi_1= \Pi(\cA(\theta_{MLE}))$ is defined in equation \eqref{eq::qq}.
\begin{theorem}
Assume (A1), (A2),
(A3L), (A4L), (A5), and (T) in Appendix \ref{sec::assumption::sample} and \ref{sec::assumption::inference}.
Then 
$
P(C_{n,\alpha}\cap\tau\left(\cS^\pi_{M,\delta}\right) \neq \emptyset) \geq 1-\alpha-\delta - O\left(\sqrt{\frac{\log n}{n}}\right).
$
Thus, by choosing $\delta = (1-q^{\pi}_1)^M$, we have
$$
P(\tau_{MLE}\in C_{n,\alpha}) \geq 1-\alpha-(1-q^{\pi}_1)^M - O\left(\sqrt{\frac{\log n}{n}}\right).
$$
\label{thm::AC}
\end{theorem}

The population quantity covered by the normal CI is given by the fact that $C_{n,\alpha}$
has an asymptotic $1-\alpha-\delta$ coverage 
for an element of $\tau\left(\cS^\pi_{M,\delta}\right)$.
The quantities
$\alpha$ and $\delta$ play similar roles in terms of coverage but they have different meanings.
The quantity $\alpha$ is the conventional confidence level, which aims
to control the fluctuation of the estimator. 
On the other hand, $\delta$ is the precision level that corrects
for the multiple local optima.

When $M $ is sufficiently large (greater than the bound
given in Proposition \ref{prop::Mnumber}),
Proposition \ref{prop::Mnumber} guarantees that we 
asymptotically have at least $1-\alpha-\delta$ coverage of
the population MLE.
Equivalently, when $\delta$ is sufficiently small ($\delta \leq (1-q^{\pi}_1)^M\Rightarrow \cS^\pi_{M,\delta}= \{\theta_{MLE}\}$), 
the first assertion implies the second assertion:
$C_{n,\alpha}$ has a coverage of $1-\alpha-(1-q^{\pi}_1)^M$ of containing $\tau_{MLE}$.


\subsection{Bootstrap}	\label{sec::BT}

The bootstrap method \citep{efron1982jackknife,efron1992bootstrap} is a common approach for constructing a CI.
While there are many variants of bootstrap,
we focus on the empirical bootstrap with the percentile approach.

When applying a bootstrap approach to an estimator that 
requires multiple initializations (such as our estimator or the estimator from an EM-algorithm), there is always a question:
\emph{How should we choose the initial point for each bootstrap sample?
Should we rerun the initialization several times
to pick the highest value for each bootstrap sample?}

Based on the following arguments,
we recommend using \emph{the estimator of the original sample, $\hat{\theta}_{n,M}$,
as the initial point for every bootstrap sample}.
The purpose of using the bootstrap is to approximate the distribution
of the estimator $\hat{\theta}_{n,M}$.
In the M-estimator theory \citep{van1998asymptotic}, we know that the variation of $\hat{\theta}_{n,M}$
is caused by the randomness of the function $\hat{L}_n(\theta)$ around $\hat{\theta}_{n,M}$. 
Thus, to make sure the bootstrap approximates such randomness, 
we need to ensure that the bootstrap estimator $\hat{\theta}^*_{n,M}$
is around $\hat{\theta}_{n,M}$ so
the distribution of $\hat{\theta}^*_{n,M}-\hat{\theta}_{n,M}$ 
approximates the distribution of $\hat{\theta}_{n,M}-\theta^\dagger$
for some $\theta^\dagger \in \cS$.
By Lemma~\ref{lem::sep},
we know that there is a local maximum $\hat{\theta}^*$ of the bootstrap log-likelihood function
that is close to $\hat{\theta}_{n,M}$. 
Therefore, we need  that the initial point to which we apply the gradient ascent method in the bootstrap sample 
is within the basin of attraction of $\hat{\theta}^*$ asymptotically (with a probability tending to $1$ when the sample size $n\rightarrow\infty$).
Because $\hat{\theta}_{n,M}$ is close to $\hat{\theta}^*$,
$\hat{\theta}_{n,M}$ will be within the basin of attraction of $\hat{\theta}^*$ asymptotically;
as a result, $\hat{\theta}_{n,M}$ is a good initial point for the bootstrap sample.

Moreover, using the same initial point in every bootstrap sample
avoids the problem of label switching \citep{redner1984mixture}. 
Label switching occurs when the distribution function is the same
after permuting some parameters. 
For instance, in a Gaussian mixture model with equal variance and proportion,
permuting the location parameters leads to the same model.
When we use the same initial point in every bootstrap sample, 
we alleviate this problem.

%
%

\begin{algorithm}
\caption{Percentile bootstrap method}
\label{fig::alg::BT1}
\begin{algorithmic}
\State 1. Let $\hat{\theta}_{n,M}$ be the output from Algorithm~\ref{fig::alg::grad}. 
\State 2. Generate a bootstrap sample and let $\hat{L}^*_n$ denote the bootstrap log-likelihood function. 
\State 3. Use $\hat{\theta}_{n,M}$ as the initial point, apply the gradient ascent algorithm to $\hat{L}^*_n$ until it converges. 
Let $\hat{\theta}^*_{n,M}$ be the convergent. 

\State 4. Repeat Step 2 and 3 $B$ times, leading to 
$
\hat{\theta}^{*(1)}_{n,M},\cdots, \hat{\theta}^{*(B)}_{n,M}. 
$
Let $\tau(\hat{\theta}^{*(1)}_{n,M}),\cdots, \tau(\hat{\theta}^{*(B)}_{n,M})$
be the corresponding value of the parameter of interest. 

\State 5. Compute the quantile
$$
\hat{\omega}_{1-\alpha} = \hat{G}_\omega^{-1}(1-\alpha),\quad \hat{G}_\omega(s) = \frac{1}{B}\sum_{\ell=1}^B I(\tau(\hat{\theta}^{*(\ell)}_{n,M,})\leq s). 
$$
\State 6. Form the CI as 
$
\hat{C}_{n,\alpha}^* = \left[\hat{\omega}_{\alpha/2}, \hat{\omega}_{1-\alpha/2}\right].
$
\end{algorithmic}
\end{algorithm}


Now we describe  the formal bootstrap procedure.
Let $X_1^*,\cdots, X^*_n$ be a bootstrap sample. 
We first calculate the bootstrap log-likelihood function $\hat{L}_n^*$. 
Next, we start a gradient ascent flow from the initial point $\hat{\theta}_{n,M}$. 
The gradient ascent flow leads to a new local maximum, denoted as $\hat{\theta}^*_{n,M}$. 
By evaluating
the function $\tau(\cdot)$ at this new local maximum,
we obtain a bootstrap estimate of the parameter of interest, $\tau(\hat{\theta}^*_{n,M})$. 
We repeat the above procedure many times and
construct a CI using the upper and lower $\alpha/2$ quantile
of the distribution of $\tau(\hat{\theta}^*_{n,M})$.
Namely, let 
$$
\hat\omega_{1-\alpha} = \hat{G}^{-1}(1-\alpha),\quad \hat{G}_\omega(s) = P(\tau(\hat{\theta}^*_{n,M})\leq s|X_1,\cdots,X_n). 
$$
The CI is
$
\hat{C}_{n,\alpha}^* = \left[\hat{\omega}_{\alpha/2}, \hat{\omega}_{1-\alpha/2}\right].
$
Algorithm~\ref{fig::alg::BT1} outlines the procedure of this bootstrap approach. 

A benefit of this CI is that
$\hat{C}_{n,\alpha}^*$ does not require any knowledge about the 
variance of $\hat{\theta}_{n,\alpha}$.
When the variance is complicated or does not have a closed form,
being able to construct a CI without knowledge of the variance makes this approach 
particularly appealing.

\begin{theorem}
Assume (A1), (A2),
(A3L), (A4L), (A5), and (T) in Appendix \ref{sec::assumption::sample} and \ref{sec::assumption::inference}.
Let $\hat{C}_{n,\alpha}^*$ be defined as the above.
Then 
$$
P(\hat C^*_{n,\alpha}\cap \tau(\cS_{M,\delta}^\pi) \neq \emptyset) \geq 
1-\alpha-\delta - O\left(\sqrt{\frac{\log n}{n}}\right).
$$
Therefore, by choosing $\delta = (1-q^{\pi}_1)^M$, where $q^{\pi}_1$ is defined in Theorem~\ref{thm::AC}, we conclude that
$$
P(\tau_{MLE}\in \hat C^*_{n,\alpha}) \geq 1-\alpha-(1-q^{\pi}_1)^M - O\left(\sqrt{\frac{\log n}{n}}\right).
$$

\label{thm::BC2}
\end{theorem}

The conclusions of Theorem~\ref{thm::BC2} are similar to those of
Theorem~\ref{thm::AC}: under appropriate conditions,
with a (asymptotic) coverage $1-\alpha-\delta$, 
the CI covers an element of $\tau(\cS_{M,\delta}^\pi)$, and
with a coverage $1-\alpha-(1-q^{\pi}_1)^M$,
the CI covers $\tau_{MLE}$.

\begin{remark}
There are many other variants of bootstrap approaches
and
Algorithm~\ref{fig::alg::BT1} describes only a simple one. 
A common alternative to the method so far presented is bootstrapping the pivotal quantity (also known as the studentized pivotal approach in \citealt{wasserman2006all}
and the percentile-$t$ approach in \citealt{hall2013bootstrap}). 
In certain scenarios, bootstrapping a pivotal quantity leads to a CI
with a higher order accuracy (namely., the coverage will be $1-\alpha-O\left(\frac{1}{n}\right)$). 
However, we may not have such a property because the bottleneck of the coverage error $O\left(\sqrt{\frac{\log n}{n}}\right)$
comes from the uncertainty of the basins of attraction. Such uncertainty may not
be reduced when using the pivotal approach. 
\end{remark}

\subsection{Confidence Intervals by Inverting a Test}	\label{sec::obj}

\begin{figure}
\center
\subfigure[Likelihood ratio test.]
{\includegraphics[width=1.5in]{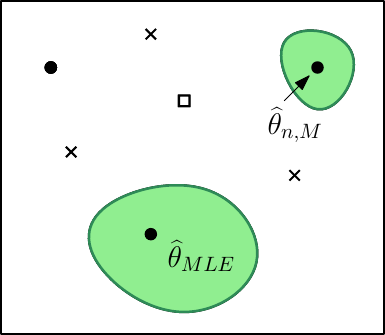}}
\subfigure[Score test.]
{\includegraphics[width=1.5in]{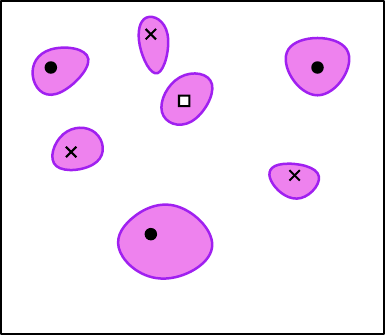}}
\subfigure[Wald test.]
{\includegraphics[width=1.5in]{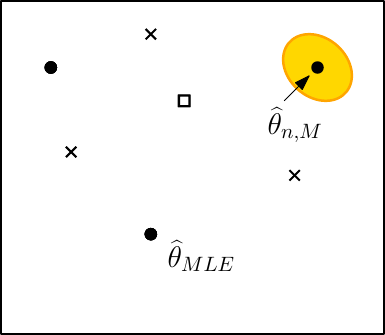}}
\caption{
Illustration of CIs from inverting a test.
The black dots are local maxima of the estimated likelihood function. 
The black crosses are saddle points.
The empty box is a local minimum.
Assume that the estimator we compute, $\hat{\theta}_{n,M}$, is the local maximum in the top-right corner
and the actual MLE is the local maximum at the bottom.
{\bf Left:}
The CI (green areas) from the likelihood ratio test.
This CI contains not only the regions around our estimator but also
regions around the actual MLE. 
{\bf Middle:}
The CI (purple areas) from the score test.
This CI contains regions around each critical point because
the gradient around every critical point is close to $0$.
{\bf Right:}
The CI (yellow area) from the Wald test.
This CI will be an ellipsoid around the estimator $\hat{\theta}_{n,M}$.
Note that this figure is only for the purpose of illustration; it was not created from a real dataset.
}
\label{fig::CI_test}
\end{figure}

In this section, we introduce three CIs of $\tau_{MLE}$ created
by inverting hypothesis tests. 
We consider three famous tests: the likelihood ratio test,
the score test, and the Wald test.
Although the three tests
are asymptotically equivalent in a regular setting (when the likelihood function is concave and smooth), 
they lead to very different CIs when
the likelihood function has multiple local maxima.
Figure~\ref{fig::CI_test} provides an example illustrating 
these three CIs of a multi-modal likelihood function.

Because it is easier to invert a test for a CI of $\theta_{MLE}$,
we focus on describing the procedure of constructing a CI of $\theta_{MLE}$ is this section.
With a $1-\alpha$ CI of $\theta_{MLE}$, say $\hat \Theta_{n,\alpha}$, one can easily 
invert it into
a $1-\alpha$ CI of $\tau_{MLE}$ by
\begin{equation}
\hat{\mathcal{T}}_{n,\alpha} = \left\{\tau(\theta):\theta \in  \hat \Theta_{n,\alpha} \right\}.
\label{eq::CI_obj1}
\end{equation}

%


\subsubsection{Likelihood ratio test}
One classical approach
to inverting a test to a CI
is to use the likelihood ratio test \citep{owen1990empirical}.
Such a CI is also called a likelihood region in \cite{kim2011comparing}. 


Under appropriate conditions, the likelihood ratio test implies
$$
2n(\hat{L}_n(\hat{\theta}_{MLE}) - \hat{L}_n(\theta_{MLE})) \overset{d}{\rightarrow} \chi^2_{d},
$$
where $\chi^2_{k}$ is a $\chi^2$ distribution with $k$ degrees of freedom and $d$ is the dimension of the parameter. 
This motivates a $1-\alpha$ CI of $\theta_{MLE}$ of the form
$$
\Theta_{n,\alpha}^{0} = 
\left\{\theta: 2n(\hat{L}_n(\hat{\theta}_{MLE}) - \hat{L}_n(\theta))\leq \zeta_{d,1-\alpha}\right\},
$$
where $\zeta_{d,1-\alpha}$ is the $1-\alpha$ quantile of $\chi^2_{d}$.

In practice, we do not know the actual MLE $\hat{\theta}_{MLE}$ and have only
the estimator $\hat{\theta}_{n,M}$. 
Therefore, we replace $\hat{\theta}_{MLE}$ by $\hat{\theta}_{n,M}$, leading to a CI
\begin{equation}
\hat \Theta_{n,\alpha} = 
\{\theta: 2n(\hat{L}_n(\hat{\theta}_{n,M}) - \hat{L}_n(\theta))\leq \zeta_{d,1-\alpha}\}.
\label{eq::CI_obj0}
\end{equation}

The CI $\hat \Theta_{n,\alpha}$ has asymptotic $1-\alpha$ converage for $\theta_{MLE}$, regardless 
of whether or not $\hat{\theta}_{n,M}$ equals to $\hat{\theta}_{MLE}$
because $\hat{L}_n(\hat{\theta}_{n,M})\leq \hat{L}_n(\hat{\theta}_{MLE})$ implies
$\hat \Theta_{n,\alpha}\supset \Theta_{n,\alpha}^{0}$.
Because the set $\Theta_{n,\alpha}^{0}$ is a CI with asymptotic $1-\alpha$
coverage of $\theta_{MLE}$, $\hat \Theta_{n,\alpha}$ also enjoys this property.
Thus, even when we only have a small number of initializations,
the CI in equation \eqref{eq::CI_obj0} has the asymptotic (in terms of sample size) coverage.

The CI $\hat \Theta_{n,\alpha}$ can be used to carry out a hypothesis test.
Consider testing the null hypothesis
\begin{equation}
H_0: \tau_{MLE} =\tau_0\subset \R.
\label{eq::test}
\end{equation}
We can simply check if the set $\tau(\hat \Theta_{n,\alpha})$ and $\tau_0$
intersects or not to decide if we can reject the null hypothesis.
This controls the type-I error asymptotically.


Although 
$\hat \Theta_{n,\alpha}$ is valid regardless of the number $M$,
it is often very conservative and
computationally intractable.
When $\hat{\theta}_{n,M}$ is not $\hat{\theta}_{MLE}$,
the set $\hat \Theta_{n,\alpha}$ is often non-concave
and composed of many disjoint regions, each of which corresponds to a local mode of $\hat{L}_n$
with 
a likelihood value greater than $\hat{\theta}_{n,M}$.
See the left panel of Figure~\ref{fig::CI_test} for an illustration. 
Moreover, 
we do not know the exact locations of other regions
because they correspond to the local modes whose basins of attraction
contain no initial points when we apply the gradient ascent method.

%

\begin{remark}
Although the number $M$ does not affect the coverage of $\hat \Theta_{n,\alpha}$, 
it does affect the size of $\hat \Theta_{n,\alpha}$.
The higher log-likelihood value of the estimator $\hat{\theta}_{n,M}$,
the smaller $\hat \Theta_{n,\alpha}$.
This is because the CI includes all parameters whose likelihood values are greater than or equal to
$\hat{L}_n(\hat{\theta}_{n,M}) - \frac{1}{2n}\zeta_{d,1-\alpha}$. 
Thus, increasing $M$ does improve the CI, but not in the sense of coverage. 
This is a distinct feature compared to the bootstrap or normal CIs. 
%
\end{remark}

\subsubsection{Score test}
In addition to the likelihood ratio test, one may invert the score test \citep{rao1948large}
to obtain a CI. 
The score test is based on the following observation: when $\theta = \theta_{MLE}$ and the likelihood function is smooth,
\begin{equation}
n\cdot \nabla \hat{L}_n(\theta)^T \hat{I}_n(\theta)^{-1}\nabla \hat{L}_n(\theta)\overset{d}{\rightarrow} \chi^2_d,
\label{eq::score}
\end{equation}
where $\hat{I}(\theta) = \frac{1}{n}\sum_{i=1}^nS(\theta|X_i)S(\theta|X_i)^T$ is the observed Fisher's information matrix. 
Thus, we can construct a CI of $\theta_{MLE}$
via
$$
\left\{\theta: n\cdot\nabla \hat{L}_n(\theta)^T \hat{I}_n(\theta)^{-1}\nabla \hat{L}_n(\theta)\leq \zeta_{d,1-\alpha}\right\}
$$
and then use it to construct a CI of $\tau_{MLE}$ as equation \eqref{eq::CI_obj1}.

Although this CI is an asymptotically valid $1-\alpha$ CI,
it tends to be very large because
$\theta_{MLE}$ is not the only case in which equation \eqref{eq::score} holds -- 
all critical points, including local minima and saddle points, of $L(\cdot)$ satisfy this equation. 
Thus, this CI is the collection of regions around critical points, and as such it tends to be a complicated set
of large total size.
The middle panel of Figure~\ref{fig::CI_test}
illustrates a CI from the score test.
In terms of testing 
equation \eqref{eq::test},
we can use this CI or use the score test because
the CI has the right coverage asymptotically.

\subsubsection{Wald test}
Another common approach to finding CIs is 
inverting the Wald test \citep{wald1943tests}.
It relies on the following fact:
$$
n\cdot(\hat{\theta}_{MLE}-\theta_{MLE})^T \hat{\sf Cov}(\hat{\theta}_{MLE})^{-1} (\hat{\theta}_{MLE}-\theta_{MLE})\overset{d}{\rightarrow} \chi^2_d.
$$
By the above property, a CI of $\theta_{MLE}$ is
$$
\left\{\theta: n\cdot(\hat{\theta}_{MLE}-\theta)^T \hat{\sf Cov}(\hat{\theta}_{MLE})^{-1} (\hat{\theta}_{MLE}-\theta)\leq \zeta_{d,1-\alpha}\right\},
$$
where $\hat{\sf Cov}$ is defined in equation \eqref{eq::acov}.

Because we do not have $\hat{\theta}_{MLE}$ but only
$\hat{\theta}_{n,M}$,
we use
$$
\left\{\theta: n\cdot(\hat{\theta}_{n,M}-\theta)^T \hat{\sf Cov}_n(\hat{\theta}_{n,M})^{-1} (\hat{\theta}_{n,M}-\theta)\leq \zeta_{d,1-\alpha}\right\}
$$
as the CI. 
By construction, this CI is an ellipsoid; see the right panel of Figure~\ref{fig::CI_test}
for an illustration.

The CI that results from this inversion will be asymptotically the same as the normal CI
so it has the same coverage property.
Namely, the CI has asymptotic $1-\alpha-\delta$ coverage for covering one element of $\cS^\pi_{M,\delta}$
and $1-\alpha-(1-q^{\pi}_1)^M $ coverage for containing $\theta_{MLE}$.

Note that unlike the two previous CIs constructed from inverting tests that 
can be applied to testing the null hypothesis in equation \eqref{eq::test},
this CI may not control type-I error because it does not have the asymptotic coverage of covering $\theta_{MLE}$.
The same issue also occurs in the normal CI $C_{n,\alpha}$ and the bootstrap CI $C^*_{n,\alpha}$.

Compared to the other two tests,
the Wald test leads to a CI that can be represented easily --
it is an ellipsoid around $\hat{\theta}_{n,M}$.
If we make further use of equation \eqref{eq::CI_obj1} to construct a CI of $\tau_{MLE},$
the result is an interval centered at the estimator $\tau(\hat{\theta}_{n,M})$
so the CI can be succinctly expressed as the estimator plus and minus the standard error.

\subsection{Two-Sample Test}	\label{sec::two}

We now explain how to do a two-sample test 
using a multi-modal likelihood function.
In a two-sample test, we observe two sets of data
$X_1,\cdots, X_n\sim P_X$ and $Y_1,\cdots, Y_m\sim P_Y$
and we would like to test if the two data sets are from the same distribution. 
That is, the null hypothesis being tested is
\begin{equation}
H_0: P_X = P_Y. 
\label{eq::H0_two}
\end{equation}

A common way of testing
\eqref{eq::H0_two} is to fit
a parametric model $P(\cdot; \theta)$ to both samples
and then compare the fitted parameters.
An advantage of this approach is that
we can interpret the results based on the likelihood model.
When rejecting $H_0$, we not only know that $H_0$
is not feasible, but also are able
to describe the degree of difference between the two datasets
by comparing their corresponding parameters.

Let $L_X(\theta) = \E \log p(X_1; \theta) $ and $L_Y(\theta) = \E \log p(Y_1; \theta)$
be the likelihood functions from the two populations. 
The null hypothesis in equation \eqref{eq::H0_two}
implies 
\begin{equation}
H_0: L_X= L_Y.
\label{eq::H0_two2}
\end{equation}
Because this equality is derived from equation \eqref{eq::H0_two},
rejecting the null hypothesis in equation \eqref{eq::H0_two2}
implies that the null hypothesis in equation\eqref{eq::H0_two} should be also rejected.


A naive idea of how to test equation \eqref{eq::H0_two2}
is to compute the MLEs in both samples
and then compare the MLEs to determine the significance.
This method implicitly assumes that we can compute the actual MLEs. 
Indeed, the $H_0$ in equation \eqref{eq::H0_two2} implies that the two MLEs should be the same
so we can directly test the locations of MLEs.
However, when $L_X$ or $L_Y$ is multi-modal, 
our estimators could be local maxima rather than the MLEs.
Thus, the two estimators may be very different 
even if $H_0$ (in equation \eqref{eq::H0_two2}) is true
because the estimators  happen to be different local maxima.


To ensure that the two estimators converge to the same destination
when the null hypothesis $H_0$ is true, the two estimators must be estimating  the same local maximum.
A simple way is
to choose the same initial point 
in both samples.
We therefore recommend the procedure in Algorithm \ref{fig::alg::two}. 

%

\begin{algorithm}[tb]
\caption{Two-sample test without the MLE} 
\label{fig::alg::two}
\begin{algorithmic}
\State 1. Pool both samples together, to form a joint sample $\{X_1,\cdots,X_n,Y_1,\cdots,Y_m\}$. 
\State 2. Fit the log-likelihood model to this joint sample and apply the gradient ascent algorithm with a random initialization to find a local maximum. 

\State 3. Iterate the above procedure $M$ times and then select
from among the local maxima that has the highest log-likelihood value. Denoted this 
local maximum by $\hat \theta_{opt}$. 
\State 4. Now for each of the two samples, fit the likelihood function and apply the gradient ascent algorithm
with initial point being $\hat \theta_{opt}$. Let $\hat\theta_X$ and $\hat\theta_Y$ denote
the destination of each of the two samples, respectively. 
\State 5. Compare $\hat\theta_X$ and $\hat\theta_Y$ using conventional two-sample test techniques. 
\end{algorithmic}
\end{algorithm}

In the first step,
we combine the data from the two samples 
because under $H_0$, combining them
gives us the largest sample from the population. 
The second and the third steps are the same as 
the algorithm described in Algorithm~\ref{fig::alg::grad}
to the pooled sample.
The resulting estimator should be an estimator with a high likelihood value
by Theorem~\ref{thm::likelihood}. 
Under $H_0$, this estimator should also have a high value
in terms of $L_X$ and $L_Y$. 
Moreover, because $\hat \theta_{opt}$ is a local maximum of the pooled likelihood function
and $H_0$ implies that $L_X$, $L_Y$, and pooled likelihood function are all the same,
$\hat \theta_{opt}$ should be close to both the local maximum of $L_X$ and the local maximum of $L_Y$
that correspond to the same local maximum of the underlying population likelihood function. 
Thus, 
both $\hat\theta_X$ and $\hat\theta_Y$ are close to the same local maximum of the underlying population likelihood function, 
so a comparison between them
would control the type-I error (asymptotically).

We do not specify how to compare $\hat\theta_X$ and $\hat\theta_Y$
because there are many ways to perform this comparison.
For instance, we can compare them by constructing their CIs and determining if the two CIs intersect.
Or we can do a permutation test where the test statistics 
are some particular distance between them, e.g., $T_1 = \|\hat\theta_X-\hat\theta_Y\|$.

\subsection{A practical procedure of choosing $M$}	\label{sec::rule}

As is shown in the above analysis, 
the choice of $M$ plays a key role in the coverage of a confidence set. 
Here we propose a practical procedure to choose $M$ based on
the analyst's judgement about $q_1^{\pi}$. 

We first pick a the precision level $\delta$. 
A simple rule is to choose $\delta = 1\%$ when the significance level $\alpha =5\%$ or $10\%$. 
Then we hypothesize a threshold $q^*$ such that we believe that $q_1^\pi\geq q^*$. 
Namely, we assume that the chance of initializing in the MLE's basin of attraction
is no smaller than $q^*$.

Under this threshold,
to ensure the coverage deficiency is less than $\delta$,
we need 
$$
(1-q^*)^M\leq \delta\Rightarrow M\geq \frac{\log \delta}{\log (1-q^*)} = M^*(\delta, q^*).
$$
When $\delta$ and $q^*$ are given,
the  number of initializations needed is $M = M^*(\delta, q^*)$. 

Under $\delta = 0.01 (1\%)$, the above threshold becomes
$
 M^*(0.01, q^*) \approx \frac{4.6}{|\log (1-q^*)|}.
$
In the extreme case where $q^*\approx 0$ (i.e., the chance of obtaining the actual MLE
is very small), 
we further have $|\log (1-q^*)| \approx q^*$,
so the above threshold becomes
\begin{equation}
 M^*(0.01, q^*) \approx 4.6/q^*.
 \label{eq::Mapp}
\end{equation}
The above threshold provides an easy-to-use reference rule
of choosing $M$.
For instance, suppose we believe that the chance of getting the MLE is no smaller than $0.1\% (10^{-3})$, 
then we need at least $M\geq 4.6\times 10^3 = 4,600$ initializations
to ensure the coverage deficiency is less than $1\%$.
The choice of $q^*$ should be determined by the analyst's judgement about the problem.

In practice, when the dimension is large, $q^*$
is often small (see Section \ref{sec::sim} and Table \ref{tab::sim1} in Appendix), 
so we need a large number of initializations to control this uncertainty.
However,
the threshold in equation \eqref{eq::Mapp} is independent of the dimension as long as $q^*$
is fixed. 
This implies that if we can design a method (such as using a strongly convex penalty) 
such that $q^* = q^*_{d}\rightarrow q^*_0$ when $d\rightarrow\infty$
and $q^*_0$ is not a tiny number, the bound $ M^*(\delta, q^*)$ can be small. 
For instance, if $q^*_0 = 0.1$ and $\delta=0.01$, we only need about $46$ initializations 
even if the dimension $d$ is large.

While the above  analysis assumes $q^*$ to be fixed and non-random,
this analysis can be applied to the case where $q^*= q^*_n$ is random and its distribution depends on $n$ as well. 
As long as we have a concentration bound such that $P(q^*_n > q^*_{\dagger,\delta})>1-\delta/2$
for some fixed quantity $q^*_{\dagger,\delta}$,
we can plug $q^*_{\dagger,\delta}$ into \eqref{eq::Mapp} 
and use the revised bound $M^*(\delta/2, q^*_{\dagger,\delta})$
as the minimal number of initializations needed (we use $\delta/2$
to account for the randomness of $q^*_n $).

\section{EM-Algorithm}	\label{sec::EM}

In this section,
we use the above framework to
analyze
estimators obtained from the EM-algorithm \citep{dempster1977maximum,mclachlan2004finite,mclachlan2007algorithm}.
For simplicity, we consider a latent variable model,
assuming that our observations are IID random variables
$X_1,\cdots,X_n$ from some unknown distribution
and 
each individual has a latent variable $Z$.
Namely, our dataset consists of pairs
$
(X_1,Z_1),\cdots,(X_n,Z_n)
$
that are IID from an unknown distribution function $P_0$
but the $Z_1,\cdots,Z_n$ are unobserved. 


With the latent variable, we assume that the density of $(X,Z)$ forms a parametric model
$p_\theta(X,Z)$, where $\theta\in\Theta$ is the underlying parameter. 
We define 
\begin{align*}
L(\theta|X,Z) &= \log p_\theta(X,Z),\\
L(\theta|X) &= \E(L(\theta|X,Z)|X),\\
L(\theta) &= \E(L(\theta|X))= \E(L(\theta|X,Z)).
\end{align*}
The function $L(\theta)$ is the population log-likelihood function
%
%
and its sample estimator is
$
\hat L_n(\theta) = \frac{1}{n}\sum_{i=1}^n L(\theta|X_i).
$
Under this model, the population MLE and sample MLE are
$$
\theta_{MLE} = {\sf argmax}_{\theta\in\Theta} \,\,L(\theta),\quad \hat\theta_{MLE} = {\sf argmax}_{\theta\in\Theta} \,\,\hat L_n(\theta).
$$

To describe the EM-algorithm, we
follow the notations of \cite{balakrishnan2017statistical}
and define
$Q(\theta|\theta') = \E(Q(\theta|\theta', X))$ and
$\hat Q_n(\theta|\theta') = \frac{1}{n}\sum_{i=1}^nQ(\theta|\theta', X_i),$
where
\begin{align*}
Q(\theta|\theta', X) &=\int p_{\theta'}(z|X) L(\theta|X,z)dz.
\end{align*}
Given an initial parameter $\theta^{(0)}$,
the population EM-algorithm updates it by 
$$
\theta^{(t+1)} = {\sf argmax}_{\theta\in\Theta} \,\,Q(\theta|\theta^{(t)})
$$
for $t=0,1,2,3,\cdots$.
When applied to data, the sample EM-algorithm uses the following update
$$
\hat \theta^{(t+1)} = {\sf argmax}_{\theta\in\Theta} \,\,\hat Q_n(\theta|\theta^{(t)}).
$$
It is known that under smoothness conditions and good initializations 
\citep{titterington1985statistical,mclachlan2004finite,mclachlan2007algorithm}, 
the stationary point (also called the destination) satisfying the following conditions:
\begin{align*}
\theta^{(\infty)} = \lim_{t\rightarrow\infty}\theta^{(t)} = \theta_{MLE},\quad
\hat \theta^{(\infty)} = \lim_{t\rightarrow\infty}\hat\theta^{(t)} = \hat\theta_{MLE}.
\end{align*}
Namely, the EM algorithm leads to the actual MLE. 

If the initial point $\theta^{(0)}$ is not well-chosen,
the EM-algorithm can converge to a local maximum or a saddle point
instead of the MLE \citep{wu1983convergence}. 
Therefore, the EM-algorithm is often applied to multiple
initial points
and the stationary point with the highest likelihood value is used as the final estimator.
In this case, the estimator can be viewed as the one generated by the method described in Algorithm~\ref{fig::alg::grad}
with the gradient ascent method being replaced by the EM-algorithm.
Let $\hat{\theta}^{EM}_{n,M}$ be the stationary point with the highest likelihood value after $M$ initializations from $\hat{\Pi}_n$. 
Note that 
we ignore the algorithmic error by assuming that for each initial point, we run the EM-algorithm until it converges
(the algorithmic error of EM-algorithm has been studied in \citealt{balakrishnan2017statistical}).
By viewing the initialization procedure as choosing the starting points
from a distribution $\hat{\Pi}_n$,
we fall into the same framework as described in Section~\ref{sec::estimator}.
As a result, 
the set of top local modes with $1-\delta$ precision level and $M$ initializations, $\mathcal{C}_{M,\delta}^{\pi}$, is well-defined.

Although we can attest that $\cS^{\pi}_{M,\delta}$ is well-defined, 
it is unclear how to analyze the stability of the basin of attraction of the EM-algorithm,
so we cannot develop 
a theoretical guarantee for inferring $\cS^{\pi}_{M,\delta}$ as we had in Theorem~\ref{thm::likelihood}.
However, we are at least able to determine
that a ball centered at $\theta_{MLE}$ with a sufficiently small radius
will be within the basin of attraction of the MLE
when the function $Q(\theta|\theta')$ is sufficiently smooth \citep{balakrishnan2017statistical}. 
We use this fact to bound the estimator $\hat{\theta}^{EM}_{n,M}$ and the population MLE $\theta_{MLE}$.

\begin{theorem}
Assume (A3L) and (EM1--4) in Appendix \ref{sec::assumption::sample} and  \ref{sec::assumption::EM}.
Define
$$
q_{EM} = \frac{1}{2}\cdot\Pi\left(B\left(\theta_{MLE}, \frac{r_0}{3}\right)\right).
$$
Then when $n\rightarrow\infty$, there exist positive numbers $c_1$ and $c_2$ such that
$$
P\left(\hat{\theta}^{EM}_{n,M} = \hat{\theta}_{MLE}\in \cA^{EM}(\theta_{MLE})\right) \geq 1-\left(1-q_{EM}\right)^M-
\eta_n(q_{EM})-c_1e^{-c_2n},
$$
where $\eta_n(t)$ is a concentration bound in (EM4)
that is often in the form of $\eta_n(t) = A_1 e^{-A_2 nt^2}$ for some fixed constant $A_1,A_2>0$.
\label{thm::EM}
\end{theorem}

Theorem~\ref{thm::EM} shows that
the EM-algorithm recovers the MLE
with a probability of at least $1-\left(1-q_{EM}\right)^M-\eta_n(q_{EM})-c_1e^{-c_2n}$.
Note that both $\eta_n(q_{EM})$ and $c_1e^{-c_2n}$ converge to $0$
when $n\rightarrow\infty$.
Thus, we have 
a bound on the number of initializations $M$
needed to ensure
that we have a good chance of obtaining the MLE $\hat{\theta}_{MLE}.$


In addition, Theorem~\ref{thm::EM} allows us to bound the coverage
of a normal CI (Section~\ref{sec::CI}) with the estimator computed from the EM algorithm. 
Let $C^{EM}_{n,\alpha}$ be the normal CI 
by replacing $\hat{\theta}_{n,M}$ by $\hat{\theta}^{EM}_{n,M}$
in equation \eqref{eq::CI_chisq}.
Namely,
$$
C^{EM}_{n,\alpha} = \left\{t: \sqrt{n}\left|\frac{t-\tau(\hat{\theta}^{EM}_{n,M})}{g_\tau^T(\hat{\theta}^{EM}_{n,M}) \hat{\sf Cov}(\hat{\theta}^{EM}_{n,M})g_\tau(\hat{\theta}^{EM}_{n,M})}\right|\leq z_{1-\alpha/2}\right\},
$$
where $g_\tau(\theta) = \nabla \tau(\theta)$ and $\hat{\sf Cov}(\theta)$ is the estimated covariance matrix from equation \eqref{eq::acov}
(see Section~\ref{sec::CI} for more details). 
%
%
%

\begin{theorem}
Assume (EM1--5), (A3,5), and (T) in Appendix \ref{sec::assumption::sample}, \ref{sec::assumption::inference} and  \ref{sec::assumption::EM}. 
Let $ q_{EM}$ be the quantity defined in Theorem~\ref{thm::EM}. 
Then when $n\rightarrow\infty$, 
$$
P\left(\tau_{MLE}\in  C^{EM}_{n,\alpha}\right) \geq 1-\alpha-\left(1-q_{EM}\right)^M-\eta_n(q_{EM})-O\left(\sqrt{\frac{\log n}{n}}\right).
$$
\label{thm::EMCI}
\end{theorem}
Theorem \ref{thm::EMCI} can be proved using Theorems \ref{thm::AC} and ~\ref{thm::EM},
so we omit the proof in this presentation.

Theorem~\ref{thm::EMCI} shows that while we can use the asymptotic normality 
to construct a CI,
we may not have the nominal coverage. 
If one wants an asymptotic $1-\alpha$ CI, 
we can use $C^{EM}_{n,\alpha/2}$
with $M\geq\frac{\log (\alpha/2)}{\log (1-q_{EM})}$
because $M\geq\frac{\log (\alpha/2)}{\log (1-q_{EM})}$ implies $\left(1-q_{EM}\right)^M\leq \alpha/2$
so the coverage of $C^{EM}_{n,\alpha/2}$
is at least $1-\alpha/2-\alpha/2 - O\left(\sqrt{\frac{\log n}{n}}\right).$

The CI from the bootstrap approach also works 
and the coverage is similar -- the coverage is decreased by $\left(1-q_{EM}\right)^M$. 
One can also invert a testing procedure
to a CI as described in Section~\ref{sec::obj};
the behaviors of the three CIs are similar to the ones in Section~\ref{sec::obj} --
the likelihood ratio test gives a CI that is asymptotically valid regardless of $M$;
the score test gives an asymptotically valid CI but tends to be very large;
and the Wald test gives a CI whose asymptotic coverage is $1-\alpha-\left(1-q_{EM}\right)^M$.


%
%
%

\begin{remark}
The probability bound in Theorem~\ref{thm::EM} is a conservative lower bound
because
the basin of attraction $\mathcal{A}^{EM}(\theta_{MLE})$ can be much larger than
the ball $B(\theta_{MLE},r_0/3)$. 
To improve the bound on the coverage,  
we need to know the stability of this basin
because the basin of attraction of the sample MLE,
$
\hat{\mathcal{A}}^{EM}(\hat{\theta}_{MLE})= \{\hat\theta^{(0)}:  \hat\theta^{(\infty)} =\hat \theta_{MLE}\},
$
can be different from $\mathcal{A}^{EM}(\theta_{MLE})$
and the probability that the EM-algorithm recovers $\hat{\theta}_{MLE}$ 
is $\hat{\Pi}(\hat{\mathcal{A}}^{EM}(\hat{\theta}_{MLE}))$, not $\Pi(\mathcal{A}^{EM}(\theta_{MLE}))$ 
Therefore, we need to know the asymptotic behavior of $\hat{\Pi}(\hat{\mathcal{A}}^{EM}(\hat{\theta}_{MLE}))$
to improve the results in Theorem~\ref{thm::EM}.
Intuitively, we expect that
the set $\hat{\mathcal{A}}^{EM}(\hat{\theta}_{MLE})$ converges to $\mathcal{A}^{EM}(\theta_{MLE})$
under some set metrics. 
However, to our knowledge, such convergence has not yet been established,
so we cannot improve the bound in Theorem~\ref{thm::EM}.
\end{remark}

\section{Real data: old faithful data}	\label{sec::real}

\begin{figure}
\center
\begin{minipage}[t]{0.4\linewidth}
\vspace{0pt}
\includegraphics[height=2in]{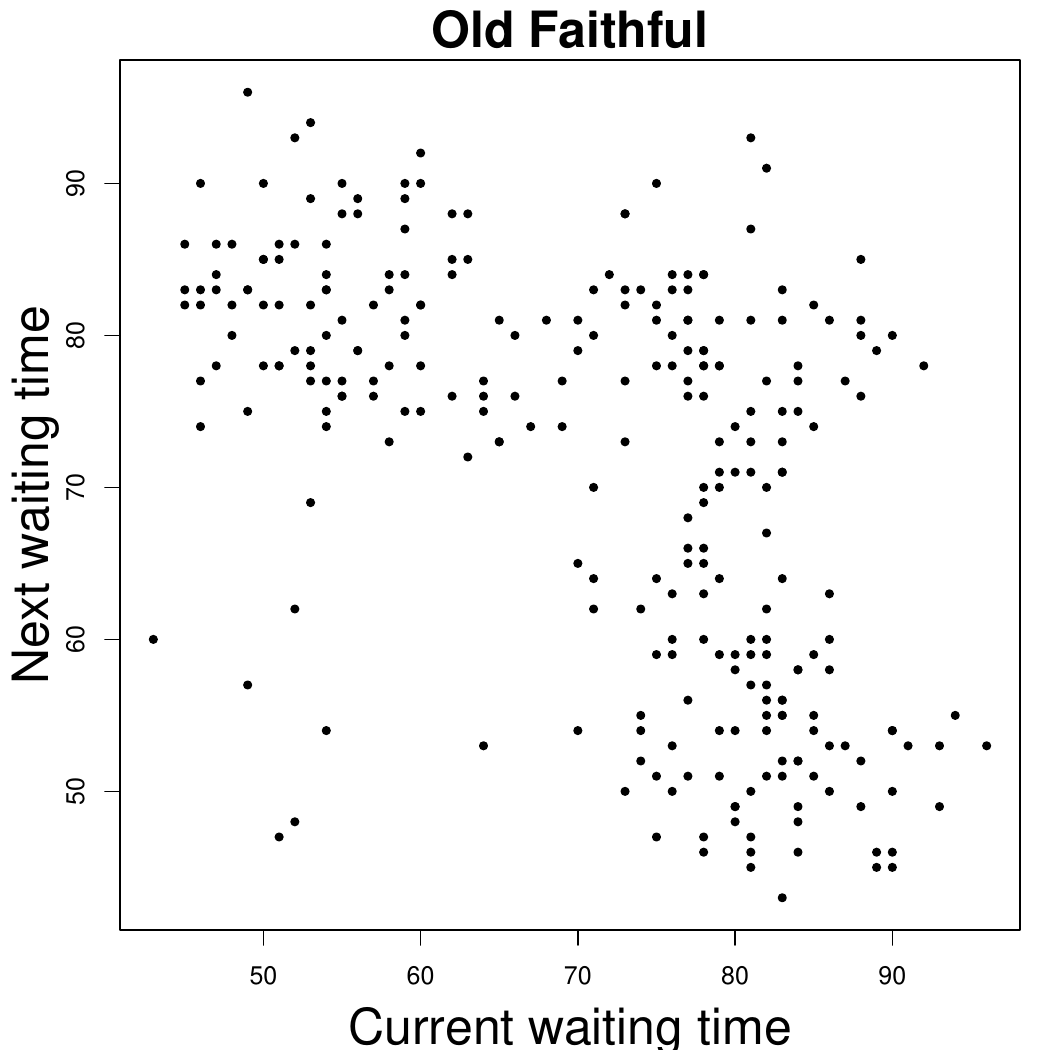}
\end{minipage}%
\begin{minipage}[t]{.25\linewidth}
\vspace{0pt}
\centering
\begin{tabular}{cr}
Likelihood&Proportion\\ \hline
-2038.44&12.5\%\\
-2036.92&14.4\%\\
-2036.72&6.2\%\\
-2035.79&0.7\%\\
-2035.65&2\%\\
-2033.35& 7.6\%\\
-2033.23& 1.6\%\\
{\bf -2029.62 (MLE)}& 21\%
\end{tabular}\end{minipage}%
\caption{
The old faithful data.
{\bf Left:} The scatter plot of the current waiting time versus the next waiting time.
{\bf Right:}
The result of applying EM algorithm to the old faithful data.
There are more than 20 local modes and here we only display the results of 8 local modes
corresponding to the top likelihood values. 
The proportion indicates the chance of obtaining that local mode from
a random initialization (default method in \texttt{mixtools}).
}
\label{fig::real}
\end{figure}


To illustrate the prevalence of the local modes in mixture models,
we consider old faithful data that
can be obtained by the object \texttt{faithful} in \texttt{R}.
It is a dataset consisting of $n=272$ observations of the eruption and waiting time
of Old Faithful geyser in Yellowstone National Park. 
Here we consider two variables: the current waiting time and the next waiting time. 

Left panel of Figure~\ref{fig::real}
shows the scatter plot of the data.
We see that clearly there are three major bumps in the data.
Thus, we fit a 3-Gaussian mixture model with the package \texttt{mixtools}
and use the default method for initialization
and draw $M=1000$ initializations.
While it seems that the 3-Gaussian mixture should be 
clear,
it turns out that we have more than 20 local modes!
This is caused by the outliers in the bottom-right corner of the left panel
so the covariance matrices have multiple local modes.
The right panel of Figure~\ref{fig::real}
shows the chance of obtaining one of the 8 local modes corresponding to
the top likelihood values.

In this case, the chance of obtaining the MLE is 21\%. 
Suppose we want to reach the precision level $\delta=1\%$, 
the number of initialization $M$ has to satisfy
$
(1-0.21)^M\leq 0.01\Rightarrow M\geq 19.53.
$
Thus, we need at least $M=20$ initializations to achieve such precision. 
Note that the approximation method in equation \eqref{eq::Mapp}
leads to 
$
M^*(0.01, 0.21) \approx 4.6/0.21 = 21.9,
$
which suggests that we need at least $M = 22$  initializations to 
control the precision level to be $1\%$.

From the analysis of Section~\ref{sec::rule}, 
the number of initialization needed is $M^* (\delta,q^*) = \frac{\log \delta}{\log (1-q^*)}$,
which is logarithmic in the precision level $\delta$.
Thus, we can improve the precision without drastically increasing $M$
as long as $q^*$ is not small.
To see this, in the old faithful data, if we want to improve precision level from $\delta=1\%$
to $\delta=0.1\%$, we only need $M\geq \frac{\log 0.001}{\log (0.79)} = 29.3$,
so we only need $M=30$  initializations.
There is no much cost to improve the prevision level in this case.


\section{Discussion}	\label{sec::discuss}

In this paper, we analyzed the performance of an estimator
derived from applying a gradient ascent method with multiple initializations. 
We study the asymptotic theory of such estimator
and investigate the properties of the corresponding CIs.
In what follows, we discuss possible extensions 
and future directions.

\subsection{Applications and Extensions}

\subsubsection{Reproducibility}

Because the initializations are random, 
it is non-trivial to `reproduce' the result.
Even we use the same dataset and the same estimating procedure,
we may not obtain the same estimator.
Both the number of initializations $M$
and the initialization method $\hat{\Pi}_n$ affect the realization of the estimator.
We should provide details on how we initialized the starting points and how many times 
the initialization was applied to fully describe how we obtained our results.
Unlike a conventional statistical analysis,
the statistical model and data alone
are not enough for reproducing the results. 

Even when all the information above is provided,
we still may not
obtain an estimator with the same numerical value
because of random initializations.
A remedy is
to report
the distribution of the log-likelihood values corresponding to 
the local maxima discovered from every initialization. 
If the result is reproducible, other research teams would be able
to recover a similar distribution when re-running the same program. 
In this case, checking the reproducibility becomes
a two-sample test problem as follows.
Suppose that another research team obtains $N$ log-likelihood values. 
If the result is reproducible, then the new $N$ values and the original $M$ values reported in the literature
should be from the same distribution.
Thus, 
\begin{align*}
H_0: &\mbox{ The result is reproducible} \Longleftrightarrow \\
&H_0:\mbox{ The two samples are from the same distribution}. 
\end{align*}
We can then apply a two-sample test to see if the result is indeed reproducible.

\subsubsection{Comparing Initialization Approaches}

Our analysis provides two new ways of comparing different initialization approaches.
As discussed in Appendix~\ref{sec::UN}, when $M$ is fixed, 
the only way to reduce the size of $\cS^\pi_{M,\delta}$ or the coverage loss, $(1-q^{\pi}_1)^M$, is 
to choose a better initialization method ($\hat{\Pi}_n$ and $\Pi$).
Ideally, we would like to put as much probability mass
in the basin of attraction of the actual MLE as possible
so that we have a high chance of finding MLE with a small number of $M$. 
When $M$ and $\delta$ are both fixed,
a better initialization approach would have either 
a smaller set $\cS^\pi_{M,\delta}$ 
or a higher value of $q^{\pi}_1 = \Pi(\mathcal{A}(\theta_{MLE}))$.
The simulation study in Section~\ref{sec::sim01}
is based on this idea in comparing three initialization methods.

\bibliographystyle{abbrvnat}
\bibliography{Local_inf.bib}

\pagebreak

{\bf \Large Appendix}

\appendix

The appendix consists of the following sections:
\begin{itemize}


\item {\bf Section \ref{sec::sim}: Simulations.}
We provide two simulations. The first simulation investigates the effect of different initialization methods
and the second simulation studies the power of the two-sample test.

\item {\bf Section \ref{sec::future}: Future work.}
We include three possible future directions in this section.

\item {\bf Section \ref{sec::morse}: Morse theory.}
We provide a short introduction of the Morse theory that will be used in studying
the basin of attraction of gradient flows.

\item {\bf Section \ref{sec::assumption}: Technical assumptions.}
We describe the technical assumptions that we need to obtain the theoretical results in this paper.

\item {\bf Section \ref{sec::mode}: Nonparametric mode hunting.}
We apply the developed framework to the nonparametric mode hunting problem
and study the problem of using the bootstrap to construct a confidence set of the density mode.

\item {\bf Section \ref{sec::UN}: Uncertainty analysis.}
We summarize different sources of uncertainty 
that need to be considered when the objective function is non-convex and has multiple 
optima.

\item {\bf Section \ref{sec::proofs}: Proofs.}
Proofs of theoretical results are described in this section.

\end{itemize}

\section{Simulations}	\label{sec::sim}

\subsection{Effect of initializations under Gaussian mixture models}	\label{sec::sim01}

To investigate how the effect of initialization methods could affect the chance of obtaining
the MLE, 
we implement a simple simulation study of fitting a 2-Gaussian mixture model
to the data generated from a 3-Gaussian mixture model.

We generate $n_1=500$ observations 
from multivariate Gaussian $N(\mu_1,\Sigma_1)$
and $n_2 = 500$ observations from $N(\mu_2,\Sigma_2)$
and $n_3=100$ observations from $N(\mu_3,\Sigma_3)$,
where the mean vectors are
$$
\mu_1 = \begin{pmatrix}
0\\0\\0
\end{pmatrix},
\mu_2 = \begin{pmatrix}
3.5\\0\\0
\end{pmatrix},
\mu_3 = \begin{pmatrix}
10\\0\\0
\end{pmatrix},
$$
and the covariance matrices are 
$$
\Sigma_1 = \begin{pmatrix}
1&0.5&0\\
0.5&1&0\\
0&0&1
\end{pmatrix},
\Sigma_2 = \begin{pmatrix}
1&-0.5&0\\
-0.5&1&
0\\0&0&1
\end{pmatrix},
\Sigma_3 = \begin{pmatrix}
1&0&0\\
0&1&0\\
0&0&1
\end{pmatrix}.
$$
Thus, the total sample size is $n=1100$
and the data can be viewed as a sample from a 3-Gaussian mixture.

We fit a 2-Gaussian mixture model to this data 
where the two initial mean vectors $m^{(0)}_1,m^{(0)}_2$ are 
generated from:
$$
m^{(0)}_1\sim N\left(\begin{pmatrix}
0\\0\\0
\end{pmatrix}, 
\begin{pmatrix}
1&0&0\\
0&1&0\\
0&0&1
\end{pmatrix}
\right),\quad 
m^{(0)}_2 = m^{(0)}_1 + \begin{pmatrix}
\rho\\0\\0
\end{pmatrix}.
$$
We consider $\rho=0, 5,10$
as  three different initialization methods. 
We use the R package \texttt{mixtools}
to perform the EM algorithm. The  proportion parameter
is randomly initialized with the default method in the package \texttt{mixtools}.
The covariance matrices are initialized with the method described in Section \ref{sec::initialization}.

We draw $M=100$ initializations,
apply the EM algorithm using the \texttt{mixtools} package, and 
record the likelihood values when the EM algorithm converges.
The result is displayed in Table~\ref{tab::sim1} (top rows).
We see that there are two local modes in this case
and the initialization method $\rho=10$ leads to a much higher chance 
of obtaining the actual MLE.
The other two initialization methods only have a chance of $20-30\%$ 
to obtain the MLE.
This is an expected result because in the data, the third
component $\mu_3$ is far from the other two components.
When fitting a two-Gaussian mixture to this data,
the actual MLE
will place one center around $(10,0,0)$
and the other center around $(1,0,0)$. 
Thus, the initializations with $\rho=10$ (third column)
will be around this configuration, leading to a high chance of obtaining 
the actual MLE.
There is a local mode that puts the two centers around $(0,0,0)$ and $(5,0,0)$.
This local mode corresponds to the case where the model
only fits the first two components $\mu_1,\mu_2$. 
Thus, the initialization with $\rho = 5$ generates  an initial point
close to this local mode, 
so it has a low chance of obtaining the actual MLE.

\begin{table}[]
\center
\begin{tabular}{l|l|rrr}
                      & Likelihood     & $\rho=0$ & $\rho=5$ & $\rho=10$ \\ \hline
\multirow{2}{*}{d=3}  & -5735.1        & 69\%       & 78\%       & 3\%         \\
                      & {\bf-5697.75 (MLE)} & 31\%       & 22\%       & 97\%        \\ \hline
\multirow{5}{*}{d=5}  & -8960.64       & 1\%        & 0\%        & 0\%         \\
                      & -8957.01       & 1\%        & 0\%        & 0\%         \\
                      & -8908.78       & 1\%        & 0\%        & 0\%        \\
                      & -8813.08       & 57\%       & 87\%       & 10\%      \\
                      & {\bf-8774.53 (MLE)} & 40\%   & 13\%   & 90\%  \\ \hline
\multirow{5}{*}{d=10} & -16645.48      & 1\%        & 0\%    & 0\%     \\
                      & -16640.42      & 1\%      & 0\%  & 0\%   \\
                      & -16639.11      & 1 \%  & 0\%    & 0\%      \\
                      & -16432.2       & 37\%   & 66\%    & 3\%         \\
                      & {\bf-16385.83 (MLE)}     & 60\%   & 34\%   & 97\%   \\ \hline
\multirow{13}{*}{d=15} & -24338.93  &1\% & 0\% & 0\%\\
  &-24316.42  &1\%  &0\%  &0\%\\
  &-24316.37  &1\%  &0\%  &0\%\\
  &-24314.68  &1\%  &0\%  &0\%\\
  &-24305.97  &1\%  &0\%  &0\%\\
  &-24276.19  &1\%  &0\%  &0\%\\
  &-24270.82  &1\%  &0\%  &0\%\\
  &-24236.65 &42\% &63\%  &3\%\\
  &-24228.38  &1\%  &1\%  &0\%\\
  &-24226.91  &0\%  &0\%  &1\%\\
  &-24225.8   &2\%  &2\% &11\%\\
  &-24224.89 &48\% &33\% &31\%\\
  &{\bf-24196.38 (MLE)}  &0\%  &1\% &54\%\\
\end{tabular}
\caption{The distribution of likelihood values when the EM algorithm converges
under different initialization methods. 
The data is generated from a 3-Gaussian mixture model and we fit a 2-Gaussian
mixture to it.
Different values of $\rho$ represents different initialization approaches. 
}
\label{tab::sim1}
\end{table}

To investigate the effect of dimensionality on the EM algorithm, 
we increase the dimension of the data to $d=5, 10, 15,$ 
and the values of the additional dimensions are all from a standard normal distribution.
We repeat the same procedure and compare the three initialization methods. 
The results are given in the middle-to-bottom rows of Table~\ref{tab::sim1}.
We observe a similar result as $d=3$
that $\rho=10$ tends to generate an initial point
that resides in the correct basin of attraction.

While $d=5,10$ do not show much difference in the low dimensional case ($d=3$), 
the case of $d=15$ has drastically changed the landscape.
First, there are many spurious local modes (a total of 13 local modes). 
Second, the change of obtaining the correct MLE from the three initialization methods
all decreases drastically. 
The first two methods have a very low chance to initialize in the correct basin of attraction.
Even the third method, which has above $90\%$ chance to correctly initialize in  lower dimensional cases,
has only around $50\%$ chance to correctly initialize. 
Note that the major reason of this is due to the increase of number of parameters. 
In $d=15,$ the total number of parameters in the 2-Gaussian mixture models is $271$,
which is comparable to the sample size $n=1100$.
This is another effect of the curse of dimensionality. 

From Table \ref{tab::sim1},
we see that initialization methods could have a huge impact 
on the chance of obtaining the actual MLE, especially in higher dimensional regimes. 
Therefore, our inference cannot ignore this deficiency of coverage.
We only apply initialization $M=100$ times to ease the computation.
The actual number of local modes can be larger than what we reported here.

\subsection{Initialization of the covariance matrix in the EM algorithm}	\label{sec::initialization}

To initialize the covariance matrix of the two Gaussian mixture in Section~\ref{sec::sim01}, we 
need to sample from the space of positive definite matrices, which is a non-trivial task. 
Thus, we apply the following procedure to generate random covariance matrices.
For simplicity, we describe the procedure for $d=3$, the cases of higher dimensions can be derived easily:
\begin{enumerate}
\item We generate 
$$
B_1, B_2 \sim {\sf Beta}(30, 10)
$$
and compute 
$$
\alpha_1 = 2B_1 -1 , \alpha_2 = -(2B_2-1).
$$

\item We generate $m= 150$ observations from 
$$
N\left(\begin{pmatrix}
0\\0\\0
\end{pmatrix}, 
\begin{pmatrix}
1&\alpha_1&0\\
\alpha_1&1&0\\
0&0&1
\end{pmatrix}
\right)
$$
and compute the sample covariance matrix, denoted as $\Sigma_1$.

\item We generate $m= 150$ observations from 
$$
N\left(\begin{pmatrix}
0\\0\\0
\end{pmatrix}, 
\begin{pmatrix}
1&\alpha_2&0\\
\alpha_2&1&0\\
0&0&1
\end{pmatrix}
\right)
$$
and compute the sample covariance matrix, denoted as $\Sigma_2$.

\item The covariance matrices $\Sigma_1,\Sigma_2$
are used as the initial values of the two covariance matrices in the EM algorithm.

\end{enumerate}

The first step is to generate random `correlation' parameters
concentrated at $0.5$ and $-0.5$, which are the actual correlation parameters
in the original sample. 
We use the Beta distribution, so it concentrates around $\pm 0.5$. 
We then use the sample covariance matrices from a small sample
as a way to generate `random covariance matrices'. 
This procedure guarantees that the initial covariance matrices
are indeed positive definite and will be close to the actual covariance matrices.

\subsection{Two-sample test}	\label{ex::two-sample}

\begin{figure}
\center
\includegraphics[height=2in]{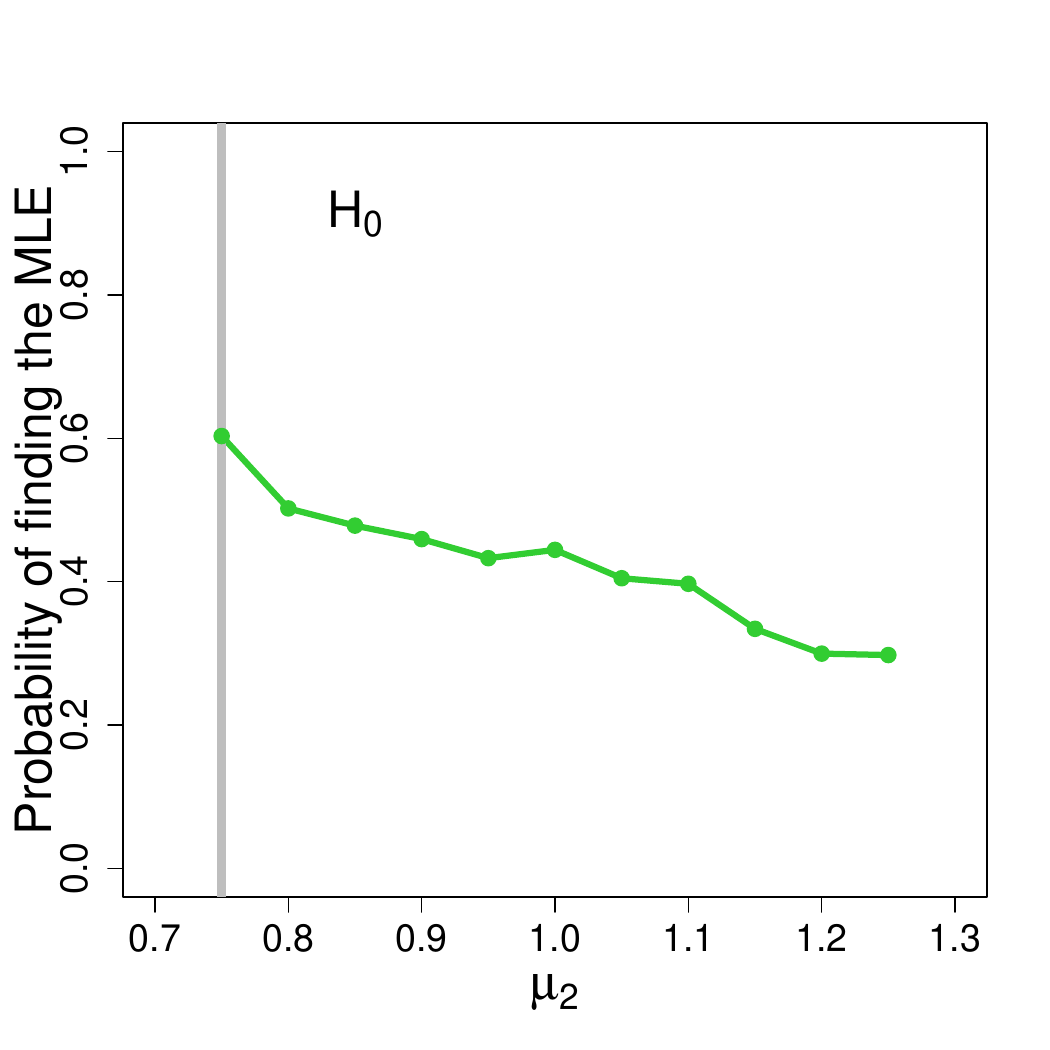}
\includegraphics[height=2in]{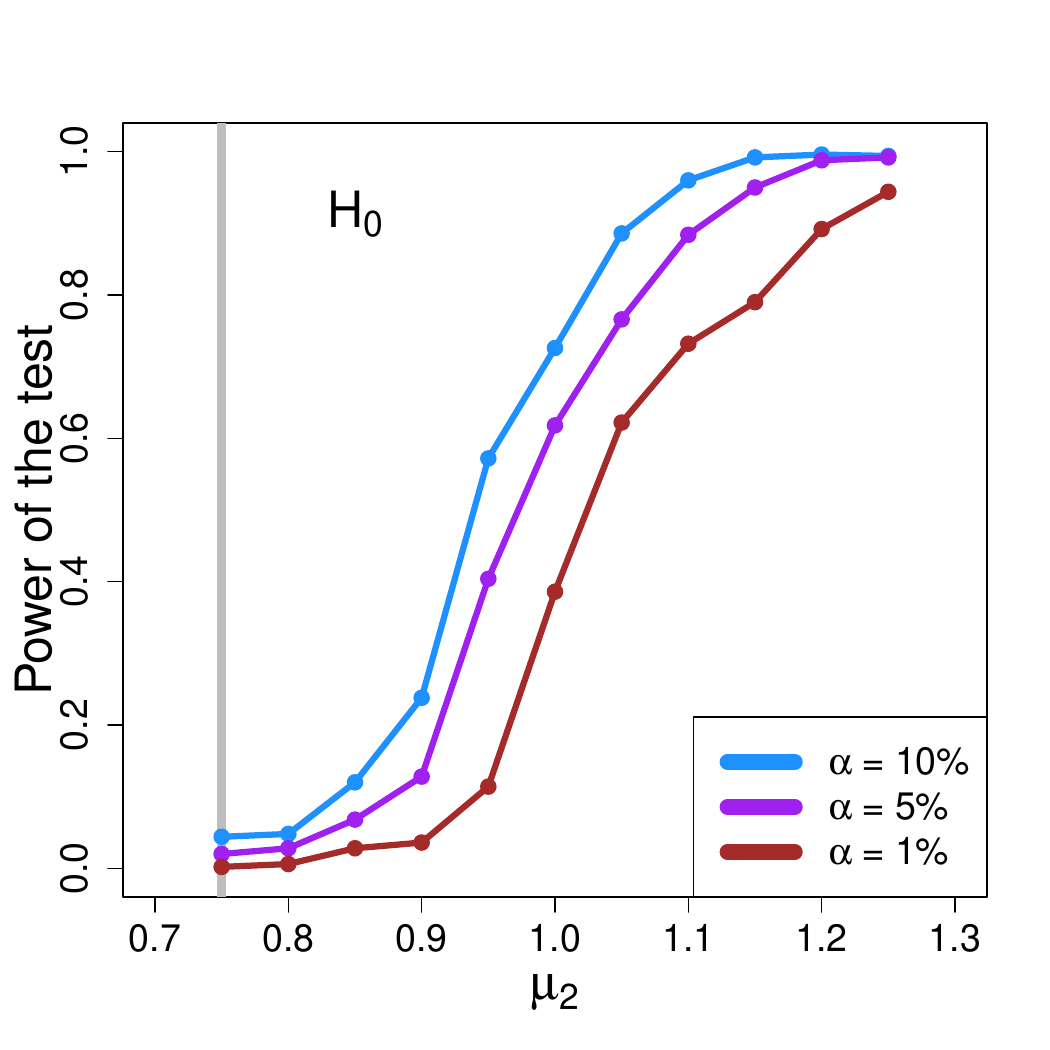}
\caption{A demonstration of the validity of the two-sample test.
The two samples are generated from two 3-Gaussian mixture models
where the second center $\mu_2$ varies under different scenarios (when $\mu_2=0.75$, they are the same).
We fit a 2-Gaussian mixture model to this problem.
The left panel shows the chance of finding the MLE with a single  random initialization
when we combine two samples (Step 2 and 3 of Algorithm~\ref{fig::alg::two} of the main paper with $M=1$). 
The right panel shows the power curve of the proposed two-sample test under different significance level. 
In the case of $\mu_2=0.75$, we only have a chance of around 60\% of finding the MLE but
the two-sample test still maintain the type-I error properly.
In both panels, the gray vertical line indicates where $H_0$ is true.
}
\label{fig::twosample}
\end{figure}

In Figure~\ref{fig::twosample},
we display a simulation result showing the power of the two-sample test under a Gaussian mixture model.
The two samples are generated from univariate 3-Gaussian mixtures with
\begin{align*}
p_1(x) &= 0.5\phi(x;0,0.2^2)+0.45\phi(x;0.75,0.2^2)+0.05\phi(x;3, 0.2^2)\\
p_2(x) &= 0.5\phi(x;0,0.2^2)+0.45\phi(x;\mu_2,0.2^2)+0.05\phi(x;3, 0.2^2),
\end{align*}
where $\phi(x;\mu,\sigma^2)$ is the normal density with mean $\mu$ and variance $\sigma^2$.
We generate $n_1=n_2=500$  observations in both samples.
The only difference between the two samples is the second center $\mu_2$ in the second sample. 
We fit a 2-Gaussian mixture model to them with only one random initialization;
in this case, there are multiple local modes in the likelihood space (part of the likelihood space can be seen in Figure~\ref{fig::ex01}).
The left panel of Figure~\ref{fig::twosample} shows the probability of finding the MLE under different $\mu_2$ (Step 2 and 3 of Algorithm~\ref{fig::alg::two} of main paper with $M=1$). 
The right panel of Figure~\ref{fig::twosample} shows the power curve using the proposed two-sample test  when the test statistic is the $L_2$ norm difference of the two fitted mean vectors.
When $\mu_2=0.75$ ($H_0$ is true), there are only around 60\% chance that we are using the MLE
but our test still controls the type-I  error properly.

%

\section{Future Work}	\label{sec::future}

\subsection{Basins of Attraction of the EM-algorithm}

In Section~\ref{sec::EM}, we analyzed the performance of the EM-algorithm.
However, the bound we obtain in Theorem~\ref{thm::EM} is very conservative 
because we only use a small ball within the basin of attraction of $\hat{\theta}_{MLE}$,
not the entire basin. 
Thus, a direction for future research is to investigate
the basins of attraction of stationary points of an EM algorithm.
In particular, we need to investigate how basins change under a small 
perturbation on the log-likelihood function (or its derivatives). 
With such stability result we could then obtain
a refined bound
on $P(\hat{\theta}^{EM}_{n,M} = \hat{\theta}_{MLE})$ and the loss of coverage.

\subsection{High-dimensional Problems}

Another direction for future work is to extend 
our framework to a high-dimensional M-estimator
with a non-concave objective function \citep{loh2015regularized,loh2017statistical, elsener2018sharp}. 
Essentially, 
the log-likelihood function can be replaced by a loss function
and the estimator $\hat{\theta}_{MLE}$ becomes an M-estimator of 
an objective function.  

The challenges in this generalization
are the assumptions. 
Assumption (A3) in Appendix is not an issue because 
the uniform convergence of the gradient and the Hessian were proved in \cite{mei2016landscape}.
However,
the constants in assumptions (A1), (A2), and (A4)
may depend on the dimension
and they could drastically reduce the convergence rate.

\subsection{Stochastic Algorithms}


Stochastic gradient ascent/descent (SGD; 
\citealt{kushner2003stochastic, bottou2010large, pensia2018generalization})
and stochastic gradient Langevin dynamics (SGLD; \citealt{gelfand1991recursive, teh2016consistency, raginsky2017non}) are popular optimization techniques
that can also be applied to find the MLE.
A possible direction for future research is to study the estimators from SGD or SGLD using the analysis done in this paper.
When using a stochastic algorithm,
we introduce additional randomness into our estimator, contributing to
a new source of uncertainty.
But this uncertainty is based on our design, so
we may be able to modify the CIs to account for this uncertainty. 
Thus, in principle we should be able to
construct a CI with properties similar to those in Theorems~\ref{thm::AC} and \ref{thm::BC2}.
Note that \cite{liang2017statistical} have already analyzed
the stochastic behavior of an estimator around a local optimum in a SGD method.
Their results could be very useful in analyzing the uncertainty from
a stochastic algorithm.

\section{Morse Theory}	\label{sec::morse}

We summarize some useful results from Morse theory \citep{milnor1963morse,morse1925relations,morse1930foundations}
that are relevant to our analysis.
Note that the notations in this section is self-consistent
and should not be confused with the notations in other sections.
For simplicity, we consider a function $f: \K\rightarrow \R$,
where $\K\subset \R^d$ is a compact set. 

A function $f$ is called a Morse function if all its critical points (points with $0$ gradient) are non-degenerate. 
When $f$ is at least twice differentiable, 
being a Morse function implies that all eigenvalues of its Hessian matrix at every critical point
are non-zero. 

Let $g = \nabla f$ and $H = \nabla \nabla f$ denote the gradient and Hessian matrix of $f$, respectively. 
Let $\cC$ be the collection of critical points of $f$. 
For any point $x\in\K$, define a gradient flow $\gamma_x:\R\rightarrow \K$
such that
$$
\gamma_x(0) = x,\quad \gamma_x'(t) = g(\gamma_x(t)). 
$$
That is, $\gamma_x$ is a flow starting at point $x$ that moves according to the gradient of $f$.
Then 
the Morse theory states that
the destination
$$
\gamma_x(\infty) =\lim_{t\rightarrow \infty} \gamma_x(t)
$$
must be one of the critical points. 

For a critical point $c\in\cC$,
define the basin of attraction $\cA(c) = \{x: \gamma_x(\infty)=c\}$. 
The stable manifold theorem (see, e.g.,  Theorem 4.15 in \citealt{banyaga2013lectures})
shows that 
the basin $\cA(c)$ is a $k$-dimensional manifold if
$H(c)$ has $k$ negative eigenvalues. 
Namely, the basin of attraction of a local maximum
is a $d$-dimensional manifold. 

Let $\cC_j$ denote the collection of all critical points
with $j$ negative eigenvalues. 
Then, $\cC_d$ is the collection of all local maxima
and $\cC_0$ is the collection of all local minima. 
Define
$$
\cW_j = \bigcup_{c\in \cC_j} \cA(c).
$$
to be the union of basins of attraction of critical points in $\cC_j$.
Then $\cW_0,\cdots, \cW_d$ form
a partition of $\K$. Namely,
$$
\K = \cW_0\cup\cW_1\cup\cdots\cup\cW_d,\quad \cW_j\cap \cW_\ell=\emptyset \mbox{ for all }j\neq \ell.
$$
Because $\cW_j$ is the union of finite number of $j$-dimensional manifolds,
%
$$
\mu(\cW_j) = 0, \quad j=0,\cdots, d-1,
$$
where $\mu$ is the Lebesgue measure in $\R^d$. 

The quantity
$\cW_d$ and its boundary are crucial in our analysis
because $\cW_d$ is the union of basins of attraction of local modes
and is the only set within $\{\cW_j: j=1,\cdots, d\}$ with a non-zero Lebesgue measure.
Using the fact that
$\cW_0,\cdots, \cW_d$ form a partition of $\K$,
the boundary of $\cW_d$, denoted by $\cB = \partial \cW_d$,
can be written as
$$
\cB = \cW_0\cup\cW_1\cup\cdots\cup\cW_{d-1}.
$$
Therefore, every point $x\in\cB$ belongs
to the basin of attraction of a critical point in $\cC_0\cup\cC_1\cup\cdots\cup\cC_{d-1}$. 
Using the stable manifold theorem,
$x$ is on a $k$-dimensional manifold, where $k$ is the number of negative eigenvalues of $H(\gamma_x(\infty))$.
Thus, there is a well-defined normal space of $x$ relative to the manifold it falls within.
We define
$\mathcal{N}(x)$ to be the normal space of that manifold at point $x$.
If $x\in\cW_k$,
we can find
$d-k$ orthonormal vectors that span $\mathcal{N}(x)$. 
Let $v_1,\cdots, v_{d-k}$
be an orthonormal basis of $\mathcal{N}(x)$
and define the matrix $V(x) = [v_1,\cdots v_{d-k}] \in \R^{d\times (d-k)}$.
We can then define the Hessian of $f$
in the normal space using $v_1,\dots, v_{d-k}$ as
\begin{equation}
H_{V}(x) = V(x)^T H(x) V(x) \in\R^{(d-k)\times (d-k)}. 
\label{eq::bdyHess}
\end{equation}
Namely, $H_V(x)$ is the Hessian matrix of $f$ by taking derivatives along $v_1,\cdots, v_{d-k}$. 

For a square matrix $M$, denote
$\lambda_{|\min|}(M)$ as the smallest eigenvalue of $M$ in absolute value.
The quantity
\begin{equation}
\rho(x) = \lambda_{|\min|}(H_V(x)) =  \inf_{v\in \mathcal{N}(x)}\frac{|v^T H(x)v|}{\|v\|^2}
\label{eq::bdylambda}
\end{equation}
plays a key role in controlling the stability of $\cB$.
Note that $\rho(x)$ is the smallest absolute eigenvalues of $H_V(x)$
so it is invariant of the orthonormal basis we are using.
Later in assumption (A2) in Section~\ref{sec::assumption::sample},
we need $\inf_{x\in \cB}\rho(x) \geq \rho_{\min}>0$.
This assumption implies
the stability of $\cB$ (see Lemma~\ref{lem::boundary}). 
The quantity $\rho(x)$ also appears in \cite{chen2017statistical}, where the authors
used it to study the stability of the boundary of basins of attraction of a smooth density function. 

When $d=1$, there are only two types of critical points -- local minima and maxima --
and
the boundary $\cB$ is the collection of local minima.
Thus, $\rho(x)$ in equation \eqref{eq::bdylambda}
is just the absolute value of the second derivative at local minima
and
$\rho(x)>0$ is a necessary condition for $f$ to be a Morse function.

\section{Technical assumptions}	\label{sec::assumption}

\subsection{Assumptions in Section~\ref{sec::sample}}	\label{sec::assumption::sample}

Let $\cB = \bigcup_{\ell=1}^K \partial \cA(m_\ell)$ be the union of boundaries of basins of attraction
of local maxima. 
The Morse theory implies that 
$$
\cB = \cW_0\cup\cW_1\cup\cdots\cup\cW_{d-1},
$$
where $\cW_j$ is a $j$-dimensional manifold (see Appendix \ref{sec::morse} for more details). 
Namely,
the boundary can be written as the union of basins of attraction of saddle points and local minima.
Thus, every point $x\in \cB$
admits a subspace space $\mathcal{N}(x)$ that is normal to $\cB$ at $x$.

{\bf Assumptions.}
\begin{itemize}
\item[\bf (A1)] 
The (global) maximum is unique and
there exist constants $g_0,r_0,\lambda_0>0$ such that:
\begin{itemize}
\item[(i)] At every critical point (i.e., $c: \nabla L(c) = 0$), 
the eigenvalues of the Hessian matrix of $L(c)$ are outside $[-\lambda_0,\lambda_0]$, and
\item[(ii)] for any $g<g_0$, there exists $r<r_0$ such that 
$$
\{\theta: \|\nabla L(\theta)\| \leq g\}\subset \mathbb{C}\oplus r,
$$
where $\mathbb{C} = \{\theta: \nabla L(\theta)=0\}$ is the collection of all critical points. 
\end{itemize}
\item[\bf (A2)] 
There exist $R_1>0$ and an integrable function $f(x)$ such that for every $i,j,k=1,\cdots,d$,
$$
\frac{\partial^3L(\theta|x)}{\partial \theta_i\partial \theta_j\partial \theta_k} \leq f(x)\quad \mbox{for all } \theta\in \mathcal{C}\oplus R_1.
$$
Moreover, there exists $\rho_{\min}>0$ such that 
$$
\sup_{x\in \cB} \inf_{v\in \mathcal{N}(x)}\frac{| v^T H(x)v |}{\|v\|^2} \geq \rho_{\min},
$$
where $\mathcal{N}(x)$ is the normal space of $\cB$ at point $x$. 

\item[\bf (A3)] 
Define
\begin{align*}
\epsilon_{1,n}& = \sup_{\theta\in\Theta}\left\|\nabla \hat{L}_n(\theta) - \nabla L(\theta)\right\|_{\max},\\
\epsilon_{2,n}& = \sup_{\theta\in\Theta}\left\|\nabla\nabla \hat{L}_n(\theta) - \nabla\nabla L(\theta)\right\|_{\max},
\end{align*}
where $\|\cdot\|_{\max}$ is the max-norm for a vector or a matrix.
We need $\epsilon_{1,n}, \epsilon_{2,n} = o(1)$ or $= o_P(1)$.
\item[\bf (A4)] 
There exists $r_1>0$ such that
\begin{align}
 \epsilon_{3,n} &= \max_{\ell=1,\cdots,K}\left|\hat{\Pi}_n(\cA_\ell)-\Pi(\cA_\ell)\right|\\
\epsilon_{4,n}& = \sup_{ r_1>r>0}\left|\hat{\Pi}_n(\cB\oplus r) - \Pi(\cB\oplus r)\right|,\label {eq::VC} \\
\sup_{ r_1>r>0} \Pi(\cB\oplus r) & = O(r) \label{eq::bdymass}
\end{align}
with $\epsilon_{3,n}, \epsilon_{4,n} = o(1)$ or $=o_P(1)$, where $\mathcal{A}_{\ell} \equiv \mathcal{A}(m_{\ell})$ for a local maximum $m_{\ell} \in \mathcal{C}$.

\end{itemize}

Assumption (A1) is called the strongly Morse assumption in \cite{mei2016landscape},
and is often used in the literature to ensure critical points
are well separated \citep{chazal2014robust,genovese2016non,chen2016comprehensive,chen2017statistical}. 
Under assumption (A1), the likelihood function is a Morse function \citep{milnor1963morse,morse1925relations}.

Assumption (A2) consists of two parts: a third-order derivative assumption and a boundary curvature assumption. 
The third-order derivative assumption is a classical assumption to establish asymptotic normality \citep{redner1984mixture, van1998asymptotic}.
The boundary curvature assumption \citep{chen2017statistical}
assumes that when we are moving away from the boundary of a basin of attraction,
the log-likeihood function has to behave like a quadratic function. 
When $d=1$, the boundary becomes the collection of local minima so this assumption
reduces to requiring the second derivative is non-zero at local minima, which is
a common assumption to ensure non-degenerate critical points.

Assumption (A3) requires that both the gradient and the Hessian 
can be uniformly estimated. 
It is also a common assumption
in the literature, see, e.g.,
\cite{genovese2016non, chazal2014robust,mei2016landscape,chen2017statistical}.

The assumptions in (A4) are more involved. 
At first glance, it seems that
the bound in equation \eqref{eq::VC} will be difficult to verify.
However, there is a simple rule to verify it using 
the fact that the collection $\{\cB\oplus r: r_1>r>0\}$ 
has VC dimension $1$ so
as long as the convergence of $\hat{\Pi}_n$ toward $\Pi$
can be written as an empirical process,
this condition holds due to the VC theory (see Theorem 2.43 of \citealt{wasserman2006all}).
The assumption on the bound in equation \eqref{eq::bdymass}
is required to avoid any probability mass around the boundary.

Here we gave additional assumptions on the likelihood function
to obtain a concrete rate.
Let
$S(\theta|x) = \nabla L(\theta|x)$, and $H(\theta|x) = \nabla\nabla L(\theta|x)$,
where
the operator $\nabla $ is taking the gradient with respect to $\theta$. 

{\bf Assumptions.}
\begin{itemize}
\item[\bf (A3L)] 
The gradient and Hessian of the log-likelihood function
are Lipschitz in the maximum norm in the following sense:
there exist $m_S(x)$ and $m_H(x)$ with $\E(m^2_S(X_1)),\E(m^2_H(X_1))<\infty$ such that
for every $\theta_1,\theta_2\in \Theta$, 
\begin{align*}
\|S(\theta_1|x)-S(\theta_2|x)\|_{\max}&\leq m_S(x)\|\theta_1-\theta_2\|,\\
\|H(\theta_1|x)-H(\theta_2|x)\|_{\max}&\leq m_H(x)\|\theta_1-\theta_2\|.
\end{align*}

\item[\bf (A4L)] 
The quantities
$\epsilon_{3,n},\epsilon_{4,n}$ in Assumption (A4)
satisfy
$$
P\left(\max\{\epsilon_{3,n},\epsilon_{4,n}\}>t\right) \leq A_1n^\nu e^{-A_2nt^2},
$$
for some constants $A_1,A_2,\nu>0$.
\end{itemize}


Assumption (A3L) is a sufficient condition to ensure that we have a uniform concentration inequality
of both the gradient and the Hessian.
The Lipschitz condition is to guarantee that the parametric family has an $\epsilon$-bracketing number
scaling at rate $O(\epsilon^{-d})$,
where $d$ is the dimension of parameters \citep{van1998asymptotic}. 
The bounds on the bracketing number can further be inverted into a concentration inequality using
Talagrand's inequality (see
Theorem 1.3 of \citealt{talagrand1994sharper})
Furthermore, (A3L) is a mild assumption because many common models, such as a Gaussian mixture model,
satisfy this assumption when $\Theta$ is defined properly.

The concentration inequality assumption of (A4L) is also mild. 
The assumption is true whenever
the data-driven approach $\hat{\Pi}_n$ is based on fitting a smooth model 
such as a normal distribution on the parameter space with mean and variance being estimated by the data.

\subsection{Assumptions in Section~\ref{sec::inference}}	\label{sec::assumption::inference}

{\bf Assumptions.}
\begin{itemize}
\item[\bf (A5)] The score function satisfies 
$$
\E\left(\|\nabla L(\theta|X_1)\|^4\right)<\infty. 
$$

\item[\bf (T)] There exist constants $R_0,T_0, T_1>0$ such that 
for every $\theta\in \mathcal{C}\oplus R_0$, 
\begin{align*}
\|\nabla \nabla \tau(\theta)\|_{\max}&\leq T_0<\infty\\
\|\nabla \tau(\theta)\| &\geq T_1 >0.
\end{align*}

\end{itemize}

Assumption (A5) is related to but stronger than the assumption required by the Berry-Esseen bound \citep{berry1941accuracy,esseen1942liapounoff}.
We need the existence of the fourth-moment because 
we need a $\sqrt{n}$ convergence rate of the sample variance toward the population variance.
It could be replaced by a third-moment assumption
but in that case, we would not be able to obtain the convergence rate of the coverage of a CI.
Assumption (T) ensures that the mapping $\tau(\cdot)$
is smooth around critical points so that we can apply the delta method \citep{van1998asymptotic}
to construct a CI.

\subsection{Assumptions in Section~\ref{sec::EM}}	\label{sec::assumption::EM}

Define $q(\theta) = Q(\theta|\theta_{MLE})$ and $\hat{q}(\theta) = \hat{Q}_n(\theta|\hat{\theta}_{MLE})$.
Let $\nabla_{\omega}$ be taking gradient with respect
to the variable $\omega$
and
$$
\cA^{EM}(\theta_{MLE}) = \{\theta^{(0)}:  \theta^{(\infty)} = \theta_{MLE}\}
$$ be the basin of attraction of $\theta_{MLE}$.

{\bf Assumptions.}
\begin{itemize}
\item[\bf (EM1)] $\theta_{MLE}$ is the unique maximizer of $q(\theta)$.
\item[\bf (EM2)] There exists a positive number $r_0>0$ such that
the following two conditions are met:
\begin{itemize}
\item[\bf (EM2-1)]
There is a positive number $\lambda_0>0$ that
$$
\sup_{\theta\in B(\theta_{MLE},r_0)} \lambda_{\max}\left(\nabla \nabla q(\theta)\right) \leq -\lambda_0<0,
$$
where $\lambda_{\max}(A)$ is the largest eigenvalue of matrix $A$.

\item[\bf (EM2-2)] 
There is a positive number $\kappa_0<\lambda_0$ such that
$$
\sup_{\theta,\theta'\in B(\theta_{MLE},r_0)}\left\|\nabla_\theta\nabla_{\theta'}Q(\theta|\theta')\right\|\leq \kappa_0.
$$

\end{itemize}
\item[\bf (EM3)] 
Let $H_q(\theta|\theta',x) = \nabla_\theta\nabla_{\theta'}Q(\theta|\theta',x) $ and
$H_q(\theta|x) =\nabla_\theta\nabla_\theta Q (\theta|\theta_{MLE},x) $.
There exist $m_{\theta\theta}(x)$ and $m_{q\theta}(x) $ with $\E(m^2_{\theta\theta}(X_1)), \E(m^2_{q\theta}(X_1))<\infty$ such that
for any $\theta_1,\theta_2\in B(\theta_{MLE},r_0)$, 
\begin{align*}
\sup_{\theta'\in B(\theta_{MLE},r_0)}\|H_q(\theta'|\theta_1,x)- H_q(\theta'|\theta_2,x)\|_{\max} &\leq m_{\theta\theta}(x) \|\theta_1-\theta_2\|\\
\|H_q(\theta_1|x)- H_q(\theta_2|x)\|_{\max} &\leq m_{q\theta}(x) \|\theta_1-\theta_2\|,
\end{align*}
where $r_0$ is the constant in (EM2).

\item[\bf (EM4)] 
Let $r_0$ be the constant in (EM2). 
For each $t>0$, there exists a function $\eta_n(t)$ such that
$$
P\left(\left|\hat{\Pi}_n\left(B\left(\theta_{MLE}, \frac{r_0}{3}\right)\right)-\Pi\left(B\left(\theta_{MLE}, \frac{r_0}{3}\right)\right)\right| >t\right) 
< \eta_n(t)
$$
and $\eta_n(t)=o(1)$ when $n\rightarrow \infty$.

\end{itemize}

Assumption (EM1) is known as the self-consistency property \citep{dempster1977maximum,balakrishnan2017statistical},
and it is a very common condition used to establish the stability of the EM algorithm around the population MLE.
Assumption (EM2) regularizes the behavior of $q$ and $Q$ around the MLE. 
In a sense, (EM2-1) requires that the function $q$ is quadratic around the MLE;
this further implies the strongly concavity assumption in \cite{balakrishnan2017statistical}. 
(EM2-2) requests that the function $Q(\theta|\theta')$ changes smoothly when varying $\theta$ or $\theta'$
which, when being satisfied,
implies the first-order stability assumption in \cite{balakrishnan2017statistical}. 
Assumption (EM3) is related to the concentration inequalities
for approximating $q(\theta)$ and $Q(\theta|\theta')$ by their empirical versions $\hat{q}(\theta)$ and $\hat{Q}(\theta|\theta')$. 
This assumption can be viewed as a modified version of the assumption (A3L). 
The final assumption (EM4) is related to assumptions (A4L) but it is much weaker --
we only need to focus on the probability of a ball around the MLE. 
In many cases, $\eta_n(t) = A_1 e^{-A_2nt^2}$
for some positive constants $A_1$ and $A_2$. 


\subsection{Assumptions of Appendix~\ref{sec::mode}}	\label{sec::assumption::mode}

Let $\beta = (\beta_1,\cdots,\beta_d) \in \{0,1,2,\cdots\}^d$ be a multi-index variable
with $d$ variables and $\|\beta\|_1 = \sum_{j=1}^d \beta_j$. 
For a multivariate function $f$ with $d$ arguments (i.e., $f: \R^d\mapsto \R$), 
we define
$$
f^{(\beta)}(x) = \frac{\partial^{\|\beta\|_1}}{\partial x_1^{\beta_1}\cdots\partial x_d^{\beta_d}} f(x).
$$
Namely, $f^{(\beta)}$ is taking derivatives with respect to the coordinate corresponding to $\beta$.
%
%
%
Let $\mathbf{BC}^r$
denote the collection of functions with bounded continuous derivatives up
to the $r$-th order.
We consider the following two common assumptions on the kernel function:
\begin{itemize}
\item[\bf(K1)] The kernel function $K\in\mathbf{BC}^3$ and is symmetric, non-negative, and 
$$\int x^2K^{(\beta)}(x)dx<\infty,\qquad \int \left(K^{(\beta)}(x)\right)^2dx<\infty
$$ 
for all $\|\beta\|_1=0,1,2, 3$.
\item[\bf(K2)] The kernel function satisfies condition $K_1$ (VC-subgraph condition) of
\cite{Gine2002}. That is, there exist some $A,\nu,C_K>0$ such that for
all $0<\epsilon<1$, we have
$\sup_Q N(\mathcal{K}, L_2(Q), C_K\epsilon)\leq \left(\frac{A}{\epsilon}\right)^\nu,$
where $N(\mathcal{K},d,\epsilon)$ is the $\epsilon$-covering number for a
semi-metric space $(\mathcal{K},d)$, and
$$
\mathcal{K} = 
\Biggl\{u\mapsto K^{(\beta)}\left(\frac{x-u}{h}\right)
: x\in\R^d, h>0,\|\beta\|_1=0,1,2\Biggr\}
$$
and $C_K = \sup_{f\in \mathcal{K}}\|f\|_\infty$.
Namely, $\mathcal{K}$ is the collection of kernel functions and their partial derivatives up to the second order.
\end{itemize}

Assumption (K1) is a common assumption used to obtain the convergence
rate of a KDE; see \cite{wasserman2006all}.
Assumption (K2)
is a mild assumption that establishes uniform convergence
of the KDE and its derivatives. 
It first appeared in \cite{Gine2002} and \cite{Einmahl2005}
and is a common assumption in the literature; for examples, see \cite{genovese2014nonparametric,chen2015asymptotic,genovese2016non}

The assumption of (K1) and (K2) implies the following concentration inequality. 
\begin{lemma}[Talagrand's inequality for KDE and its derivatives]
Assume (K2) and that $\K$ is a compact set and $h\rightarrow0$, $n\rightarrow\infty$, $\frac{nh^{d+4}}{\log n}\rightarrow \infty$.
Let $\hat{p}_h(x)$ be the KDE
and $\hat{g}_h(x)$ be its gradient
and $\hat{H}_h(x)$ be the Hessian matrix.
Then there exist positive numbers $A_1,A_2$, and $t_0$
such that
\begin{align*}
P\left(\sup_{x\in\K}\|\hat{p}_h(x) - \E(\hat{p}_h(x))\| >t\right)&\leq A_1 e^{-A_2nh^dt^2}\\
P\left(\sup_{x\in\K}\|\hat{g}_h(x) - \E(\hat{g}_h(x))\|_{\max} >t\right)&\leq A_1 e^{-A_2nh^{d+2}t^2}\\
P\left(\sup_{x\in\K}\|\hat{H}_h(x) - \E(\hat{H}_h(x))\|_{\max} >t\right)&\leq A_1 e^{-A_2nh^{d+4}t^2}
\end{align*}
for every $t> \frac{t_0}{\sqrt{n}}.$
\label{lem::kernel}
\end{lemma}
As
this lemma is a direct result from Theorem 1.1 of \cite{talagrand1994sharper} and assumption (K2),
we omit the proof.

\section{Nonparametric Mode Hunting}	\label{sec::mode}

%
%

To demonstrate that our analysis can be applied to situations outside the likelihood model,
we study the 
nonparametric mode hunting problem. 
Mode hunting aims to find the global mode/maximum
(or all local modes)
of the underlying probability density function \citep{good1980density, hall2004bump,burman2009multivariate}. 
This is a classical problem in density estimation \citep{parzen1962estimation,romano1988bootstrapping,romano1988weak}
and is often related to the task of clustering \citep{Li2007,azizyan2015risk,chacon2015population,chen2016comprehensive}.
In this section, we focus on inferring the global mode.

We assume that our data consists of $X_1,\cdots, X_n$ that are IID from an unknown 
PDF $p$ with a compact support $\K\subset \R^d$. 
The goal is to make inference of
$
{\sf Mode}(p) = {\sf argmax}_{x\in \K}\,\, p(x).
$

A plug-in estimate from a density estimator offers
a simple approach of finding the mode.
Namely, we first construct a smooth density estimator $\hat{p}_h$
using the data and then form our estimator
$\hat {\sf Mode}(p) = {\sf Mode}(\hat{p}_h).$
A popular choice of $\hat{p}_h$ is the kernel density estimator (KDE): 
$
\hat{p}_h(x) = \frac{1}{nh^d}\sum_{i=1}^n K\left(\frac{X_i-x}{h}\right),
$
where $K(\cdot)$ is a smooth function called the kernel function (we often use the Gaussian as the kernel)
and $h>0$ is the smoothing bandwidth that controls the amount of smoothing. 

When using a KDE to do mode hunting, 
there is a simple numerical approach called the meanshift algorithm \citep{cheng1995mean,comaniciu2002mean,fukunaga1975estimation}.
The meanshift algorithm is a gradient ascent 
method
that moves a given point $x_0$
by its density gradient $\nabla \hat{p}_h(x_0)$ 
until it arrives at a local mode. 
Specifically, given a point $x_t$, the meanshift algorithm moves it to
$$
x_{t+1} \leftarrow \frac{\sum_{i=1}^n X_i K\left(\frac{X_i-x_t}{h}\right)}{\sum_{i=1}^n K\left(\frac{X_i-x_t}{h}\right)}. 
$$
One can show that the amount of shift, $x_{t+1}-x_{t} $, is proportional to the gradient $\nabla \hat{p}_h(x_t)$
when the kernel function $K$ is the Gaussian.
This implies that the meanshift algorithm is a gradient ascent approach.


Although there are many ways of choosing the initial points,
we consider a simple method through which
we generate $n$ initial points by sampling with replacement from the observed data points.
This 
implies that $M= n$ and $\hat{\Pi}_n = \hat{P}_n,$ where $\hat{P}_n$ is the empirical distribution function.
%
This method is based on the fact that 
the density around the mode
is often very high, so it is very likely to have
observations within the basin of attraction of the mode.
After $n$ initializations, let $\hat {\sf Mode}^\dagger(p)$ denote
the resulting estimator of the mode.
We will use $\hat {\sf Mode}^\dagger(p)$ to construct a CI.


Because the asymptotic covariance matrix of the mode estimator $\hat {\sf Mode}^\dagger(p)$
is quite complex, we use the bootstrap approach. 
Let $X^*_1,\cdots,X^*_n$ be a bootstrap sample (from sampling the original dataset with replacement)
and $\hat{p}_h^*$ be the corresponding KDE using the same smoothing bandwidth and the same kernel function.
Using the estimator $\hat {\sf Mode}^\dagger(p)$ as the initial point,
we apply the meanshift algorithm with the bootstrap sample $X^*_1,\cdots,X^*_n$
and iterate it until the algorithm convergence.
Let $\hat {\sf Mode}^{\dagger*}(p)$ be the convergent point,
which can be viewed as the mode estimator using the bootstrap sample.
Let $\hat{d}^\dagger_{1-\alpha}$
be the $1-\alpha$ quantile of $\left\|\hat {\sf Mode}^{\dagger*}(p) - \hat {\sf Mode}^{\dagger}(p)\right\|$
given $X_1,\cdots,X_n$.
Namely,
\begin{align*}
\hat{d}^\dagger_{1-\alpha} &= \hat{G}^{-1}_h(1-\alpha),\\
\hat{G}_h(t) &= P\left(\left\|\hat {\sf Mode}^{\dagger*}(p) - \hat {\sf Mode}^{\dagger}(p)\right\|<t|X_1,\cdots,X_n\right).
\end{align*}

Define a $1-\alpha$ CI as
$
\mathbb{M}_{n,\alpha} = \left\{x: \left\|x-\hat {\sf Mode}^\dagger(p)\right\|\leq \hat{d}^\dagger_{1-\alpha}\right\}.
$
Namely, $\mathbb{M}_{n,\alpha}$ is a ball centered at the original estimator $\hat {\sf Mode}^\dagger(p)$
with a radius $\hat{d}^\dagger_{1-\alpha}$.


\begin{theorem}
Assume (A1) and (A2) in Appendix \ref{sec::assumption::sample}
of the density function
and also that the kernel function satisfies assumptions (K1) and (K2)	 in Appendix \ref{sec::assumption::mode}.
Suppose
$h\rightarrow0$, $\frac{nh^{d+4}}{\log n}\rightarrow \infty$, and $nh^{d+6}\rightarrow 0$, then
\begin{align*}
P(\mathbb{M}_{n,\alpha}\cap \cS_{n,\delta}^\pi \neq \emptyset) &\geq 1-\alpha-\delta - O\left(\sqrt{nh^{d+6}}\right)-O\left(\sqrt{\frac{\log n}{nh^{d+2}}}\right),
\end{align*}
where $\cS_{n,\delta}^\pi$ is the set defined in Section~\ref{sec::pop}
from replacing the likelihood function $L$ and initialization distribution $\Pi$ by $p$ and $P$, respectively. 
Moreover, let $\mathcal{A}_{\sf mode}$ denote the basin of attraction of the mode of $p$.
Then under the same requirement for the smoothing bandwidth,
\begin{align*}
P({\sf Mode}(p)\in \mathbb{M}_{n,\alpha} ) &\geq 1-\alpha-\left(1-\frac{1}{3}P(\mathcal{A}_{\sf mode})\right)^n
- O\left(\sqrt{nh^{d+6}}\right)-
O\left(\sqrt{\frac{1}{nh^{d+2}}}\right).
\end{align*}

\label{thm::BC2_NP}
\end{theorem}

Theorem~\ref{thm::BC2_NP}
is analogous to Theorem~\ref{thm::BC2} where the two big-O terms
are generated from the bias and the stochastic variation of a nonparametric estimator.
The quantity $\left(1-\frac{1}{3}P(\mathcal{A}_{\sf mode})\right)^n$ in the second assertion is 
needed to account for the uncertainty from initializations.
The multiplier $\frac{1}{3}$ is a constant that we use to simplify the proof and it can be improved to any fixed number that is less than $1$
(we cannot replace it by $1$ due to the uncertainty of the basin of attraction and the uncertainty of initialization method).
Asymptotically, this quantity can be ignored because it is exponentially converging to $0$;
we keep it here to emphasize the uncertainty from initializations.
The fact that this quantity is much smaller than the other components, $O\left(\sqrt{nh^{d+6}}\right)$ and $O\left(\sqrt{\frac{1}{nh^{d+2}}}\right)$,
implies that in nonparametric mode hunting,
the main bottleneck for the coverage is not from
initialization but from
the nonparametric estimation of the gradient. 
Note that the big-O terms in the first and the second assertions differ by a $\sqrt{\log n}$ term 
($O\left(\sqrt{\frac{\log n}{nh^{d+2}}}\right)$ versus $O\left(\sqrt{\frac{1}{nh^{d+2}}}\right)$).
This discrepancy comes from the fact that the uncertainty of boundary in the first case introduces an error 
at the order of the uniform convergence rate of the gradient, which is $O\left(\sqrt{\frac{\log n}{nh^{d+2}}}\right)$. 
In the second case, the uncertainty of boundary is absorbed into the quantity $\frac{1}{3}P(\mathcal{A}_{\sf mode})$
and the rate $O\left(\sqrt{\frac{1}{nh^{d+2}}}\right)$ comes from the estimating the gradient of the density function.

We have two requirements on the smoothing bandwidth: $\frac{nh^{d+4}}{\log n}\rightarrow \infty$ and $nh^{d+6}\rightarrow 0$. 
The former condition ensures the uniform convergence of the Hessian matrix of the KDE.
The latter one is an undersmoothing bandwidth requirement for gradient estimation.
We need to undersmooth the gradient estimation so
that the bias converges faster than the stochastic variation,
leading to a CI with the desired coverage. 

Theorem~\ref{thm::BC2_NP} is consistent with Theorem 2.1 of \cite{romano1988bootstrapping},
where the author proved that 
the empirical bootstrap leads to a consistent CI
of the mode in $d=1$
when the smoothing bandwidth satisfies $nh^{7}\rightarrow 0, \frac{nh^{5}}{\log n}\rightarrow \infty$. 

Theorem~\ref{thm::BC2_NP} is similar to Theorem~\ref{thm::BC2} 
but there are two major differences. 
First, assumption (A3) holds with $\epsilon_{1,n} = O(h^2) + O_P\left(\sqrt{\frac{\log n}{nh^{d+2}}}\right)$,
$\epsilon_{2,n} = O(h^2) + O_P\left(\sqrt{\frac{\log n}{nh^{d+4}}}\right)$. 
These are the nonparametric estimation rates of the gradient and Hessian matrix (see, e.g., page 17 of 
\citealt{genovese2014nonparametric} and Lemma 10 of \citealt{chen2015asymptotic}). 
The other difference is that the CI here is in $\R^d$ so we need to apply a multivariate Berry-Esseen bound
\citep{gotze1991rate,sazonov1968multi}. 
Note that the use of empirical measure for the initializations (i.e., $\hat{\Pi}_n = \hat{P}_n$)
automatically implies that the assumption (A4L) holds.

\section{Uncertainty Analysis}	\label{sec::UN}


As is discussed in the main paper,
the behavior of the estimator $\hat{\theta}_{n,M}$
depends on several components:
the initialization method, the number of initializations,
and the sample size. 
Each of these factors contribute to
the uncertainties of $\hat{\theta}_{n,M}$. 
In this section, 
we categorize the uncertainties based on their sources and discuss possible remedies to
reduce them. 
%
%

\emph{Source 1: Uncertainty of parameter estimation.}
The uncertainty of parameter estimation has been studied in the
statistical literature for decades.
This source of uncertainty
is often captured by
a CI or the standard error \citep{mclachlan2004finite,mclachlan2007algorithm}. 
Uncertainty of parameter estimation is caused by
the random fluctuation of the estimated likelihood function
around each local maximum. 
Under assumption (A1), this uncertainty is often related to the convergence rate 
of the gradient of the log-likelihood function (score function)
because the local maxima (critical points) are where the gradient is $0$.
Common CIs, such as the normal CI (Section~\ref{sec::CI})
and the bootstrap CI (Section~\ref{sec::BT}),
can capture this type of uncertainty.


\emph{Source 2: Number of initializations.}
When the likelihood function has multiple local modes,
initializations introduce uncertainties of $\hat{\theta}_{n,M}$. 
Small number of initializations leads to a large uncertainty of $\hat{\theta}_{n,M}$.
%
There are two ways of expressing this type of uncertainty.
First,
from the analysis in Section~\ref{sec::estimator},
we know that the estimator $\hat{\theta}_{n,M}$ is close to an element of $\cS^\pi_{M,\delta}$
with a probability of at least $1-\delta$. 
Thus, for a given initialization method, the size of $\cS^\pi_{M,\delta}$ 
can be a measure of the uncertainty related to the number of initializations. 
The other way of expressing this uncertainty
is the loss in coverage of a CI with respect to covering $\tau_{MLE}$.
In Theorem~\ref{thm::AC} and \ref{thm::BC2}, we see that 
the probability of a CI of containing $\tau_{MLE}$
is $1-\alpha -(1-q^{\pi}_1)^M$.
The quantity $(1-q^{\pi}_1)^M$ is the uncertainty caused by $M$, the number of initializations.

\emph{Source 3: Uncertainty of the initialization method.}
Another source of uncertainty in our inference comes from the differences between
the data-driven initialization method
$\hat{\Pi}_n$ and its population version $\Pi$. 
At the population level,
the set to which the estimator $\hat{\theta}_{n,M}$ is converging,
$\cS^\pi_{M,\delta}$, 
is defined through the limiting initialization distribution $\Pi$.
But our estimator $\hat{\theta}_{n,M}$ is constructed 
by applying the gradient ascent with initializations from $\hat{\Pi}_n$.
Thus, the differences in initialization methods also
contribute to the uncertainty in our inference.
The first two requirements in assumption (A4) serve to regularize
the uncertainty from this source.

%


\emph{Source 4: Uncertainty of the basin of attraction.}
An important quantity in
the construction of $\cS^\pi_{M,\delta}$ is
the basin of attraction of gradient flows of $L$.
When we compute the estimator $\hat{\theta}_{n,M}$,
the relevant basins of attraction are defined through gradient flows of $\hat{L}_n$.
Such basins of attraction are often different and the differences contribute
uncertainty to our inference.
Under assumptions (A2) and (A4), this 
uncertainty is controlled by the rate of estimating the gradient of the log-likelihood function. 



\emph{Source 5: Algorithmic uncertainty.}
In this paper, we assumed that we can use the actual `continuous' gradient ascent flow
to find the estimator.
However, in practice 
gradient ascent algorithms perform a `discrete' gradient ascent,
and thus
the number of iterations and the step size used in the algorithm will affect the performance of the estimator.
This type of uncertainty should also be considered when we report our findings. 
Fortunately,
when the initial point is sufficiently close to a local optimum (so that the objective function is locally strongly concave/convex) and
the step size is sufficiently small (both bounds are known in the literature--related to the conditioning number; \citealt{nesterov2013introductory}), 
a gradient decent algorithm converges linearly to a critical point \citep{boyd2004convex, nesterov2013introductory}. 
When the initial point is outside the nice area, it is less clear about how 
the gradient decent algorithm and the corresponding flow are related;
some prior work can be found in \cite{beyn1987numerical,merlet2010convergence,arias2016estimation}.
So this may introduce some additional uncertainties. 

\section{Proofs}	\label{sec::proofs}

\begin{proof}[ of Proposition \ref{prop::Mnumber}]

Our proof consists of two parts. 
In the first part,
we will prove that the likelihood function $L(\theta)$ 
is concave within the ball $B(\theta_{MLE}, \frac{\lambda_0}{c_3})$
and conclude that 
$B(\theta_{MLE}, \frac{\lambda_0}{c_3})\subset \mathcal{A}(\theta_{MLE})$. 
In the second part, 
we will derive the bound on $M$.

{\bf Part 1: Concavity of $L(\theta)$ within $B(\theta_{MLE}, \frac{\lambda_0}{c_3})$.}
For any point $x\in B(\theta_{MLE}, \frac{\lambda_0}{c_3})$,
define $\omega = x- \theta_{MLE}$ and $\omega_0  = \frac{\omega}{\|\omega\|}$. 
To show the concavity, we start with analyzing the Hessian matrix $H(x)$.
By Taylor's theory,
\begin{align*}
H(x) = H(\theta_{MLE}+\omega) &=H(\theta_{MLE})+ \int_{t=0}^{t=\|\omega\|} dH(\theta_{MLE}+ t\omega_0)\\
& =H(\theta_{MLE})+ \int_{t=0}^{t=\|\omega\|} \frac{dH(\theta_{MLE}+ t\omega_0)}{dt}dt\\
& =H(\theta_{MLE})+ \int_{t=0}^{t=\|\omega\|} \nabla_{\omega_0}H(\theta_{MLE}+ t\omega_0)dt,
\end{align*}
where $\nabla_{\omega_0}$ denotes the directional derivative with respect to $\omega_0$.
Thus, 
\begin{align*}
\|H(x) - H(\theta_{MLE})\|_2 &= \left\|\int_{t=0}^{t=\|\omega\|} \nabla_{\omega_0}H(\theta_{MLE}+ t\omega_0)dt\right\|_2\\
&\leq \int_{t=0}^{t=\|\omega\|} \left\|\nabla_{\omega_0}H(\theta_{MLE}+ t\omega_0)\right\|_2dt\\
& \leq \int_{t=0}^{t=\|\omega\|} c_3dt \\
&= c_3 \|\omega\| \leq  \lambda_0.
\end{align*}
Note that we use the fact that $\|\omega\| = \|x-\theta_{MLE}\|\leq \frac{\lambda_0}{c_3}$
in the last inequality. 
By the Weyl's inequality (see, e.g., Theorem 4.3.1 of \citealt{horn1990matrix}), 
$$
\lambda_{\max}(H(x)) \leq \lambda_{\max}(H(\theta_{MLE})) + \|H(x) - H(\theta_{MLE})\|_2 < -\lambda_0 +\lambda_0 = 0,
$$
where $\lambda_{\max}(A)$ is the largest eigenvalue of a square matrix $A$.
Thus, every eigenvalue of $H(x)$ is negative so $H(x)$ is negative definite. 
This analysis applies to every $x\in B(\theta_{MLE}, \frac{\lambda_0}{c_3})$.
This, together with the fact that
the ball $B(\theta_{MLE}, \frac{\lambda_0}{c_3})$ is a convex set,
implies that $L(\theta)$ is a concave function within $B(\theta_{MLE},\frac{\lambda_0}{c_3})$,
which further implies that $B\left(\theta_{MLE}, \frac{\lambda_0}{c_3}\right)\subset \mathcal{A}(\theta_{MLE})$.

{\bf Part 2: Bound the number $M$.}
Because $B(\theta_{MLE}, \frac{\lambda_0}{c_3})\subset \mathcal{A}(\theta_{MLE})$,
$$
q^\pi_1 = \Pi(\mathcal{A}(\theta_{MLE})) \geq \Pi\left(B\left(\theta_{MLE}, \frac{\lambda_0}{c_3}\right)\right)
$$
To make sure $\cS^\pi_{M,\delta}= \{\theta_{MLE}\}$, 
we need $M$ to satisfy
$$
(1-q^{\pi}_1)^M \leq \delta. 
$$
Thus, a lower bound on $M$ is 
$$
\left(1-\Pi\left(B\left(\theta_{MLE}, \frac{\lambda_0}{c_3}\right)\right)\right)^{M} \leq \delta.
$$
Taking logarithm in both sides, we obtain
$$
M\cdot \log \left(1-\Pi\left(B\left(\theta_{MLE}, \frac{\lambda_0}{c_3}\right)\right)\right) \leq \log \delta.
$$
Because $\log \left(1-\Pi\left(B\left(\theta_{MLE}, \frac{\lambda_0}{c_3}\right)\right)\right)$ is negative,
so this is equivalent to
$$
M\geq \frac{\log \delta}{\log \left(1-\Pi\left(B\left(\theta_{MLE}, \frac{\lambda_0}{c_3}\right)\right)\right)},
$$
which is the desired bound.

\end{proof}

Before proving Theorem~\ref{thm::precisionset},
we first recall a useful result from \cite{chen2017statistical}.
For two sets $A$ and $B$, define their Hausdorff distance as
$$
d_H(A,B) = \inf\{r>0:B\subset A\oplus r,\,\, A\subset B\oplus r \},
$$
where $A\oplus r =\{x: \inf_{y\in A}\|x-y\|\leq r\}$
is the region within a distance $r$ to the set $A$.
The Hausdorff distance can be viewed as 
an $L_\infty$ distance between sets.

\begin{lemma}[Theorem 1 of \cite{chen2017statistical}]
Assume (A1--3) and $\max\{\epsilon_{1,n},\epsilon_{2,n}\} \rightarrow 0$. 
Let $\cB$ be the boundary of basin of attraction defined in Section~\ref{sec::sample}
and $\hat \cB$ be the corresponding sample version.
Then 
$$
d_H(\mathcal{B}, \hat{\mathcal{B}}) = O(\epsilon_{1,n}). 
$$
\label{lem::boundary}
\end{lemma}

\begin{proof}[ of Theorem~\ref{thm::precisionset}]

We assume that Lemma \ref{lem::sep} holds so that 
there exists $\bar{\theta} \in\cS$ such that
$\hat{\theta}_{n,M}$ is closest to $\bar{\theta}$.
Therefore, we can write $\|\hat{\theta}_{n,M} - \bar{\theta}\| = d(\hat{\theta}_{n,M}, \cS)$
and focus on the analysis of $\|\hat{\theta}_{n,M} - \bar{\theta}\|$.
Note that by definition, Lemma \ref{lem::sep} holds with a probability of $1-\xi_n $. 

The proof of the first argument contains three steps (part 1-3). 
In the first step, we bound the distance $\|\hat{\theta}_{n,M} - \bar{\theta}\|$.
In the second step we analyze the difference between probabilities $\hat{q}_\ell - q_\ell$.
Then in the third step,
we show that with some probability, $\bar{\theta}\in\cS^{\pi}_{M,\delta}$.
The first and third steps together imply a bound on 
$
d(\hat{\theta}_{n,M}, \cS^\pi_{M,\delta}), 
$
which is the desired quantity. 
The proof of the second argument will be in the part 4,
which is built on the first three parts.


Recall that $S(\theta) = \nabla L(\theta)$, $H(\theta)  =\nabla\nabla L(\theta)$, $\hat{S}_n(\theta) = \nabla \hat{L}_n(\theta)$, 
and $\hat{H}_n(\theta) = \nabla\nabla \hat{L}_n(\theta)$.

{\bf Step 1: bounding $\|\hat{\theta}_{n,M} - \bar{\theta}\|$.}
Because $\hat{\theta}_{n,M} $ is a local maximum of $\hat{L}_n$
and $\bar{\theta}$ is a local maximum of $L$, we have
$$
0 =\hat{S}_n (\hat{\theta}_{n,M}) =   S(\bar{\theta}). 
$$
Thus, by Taylor expansion and the differentiability of $L$ in assumption (A2),
\begin{align*}
\hat{S}_n(\bar{\theta}) - S(\bar{\theta}) & = \hat{S}_n(\bar{\theta}) - \hat{S}_n(\hat{\theta}_{n,M})\\
&= \hat{H}_n(\bar{\theta}) (\bar{\theta}- \hat{\theta}_{n,M}) + O\left(\|\bar{\theta}- \hat{\theta}_{n,M}\|^2\right)\\
&=H^{-1}(\theta) \left( \bar{\theta} - \hat{\theta}_{n,M} \right) + O\left( \|\bar{\theta} - \hat{\theta}_{n,M}\| \cdot \epsilon_{2,n} + \|\bar{\theta} - \hat{\theta}_{n,M}\|^2 \right),
\end{align*}
which implies 
\begin{equation}
\begin{aligned}
\hat{\theta}_{n,M}-\bar{\theta} 
& = -H^{-1}(\bar{\theta}) (\hat{S}_n(\bar{\theta}) - S(\bar\theta))+ O\left(\|\bar{\theta}- \hat{\theta}_{n,M}\|\cdot \epsilon_{2,n}+\|\bar{\theta}- \hat{\theta}_{n,M}\|^2\right).
\end{aligned}
\label{eq::asymp}
\end{equation}
Note that $\hat{H}^{-1}_n(\bar{\theta}) = H^{-1}(\bar{\theta}) + O(\epsilon_{2,n})$
is due to assumption (A1) that the Hessian matrix is invertible around each neighborhood of local maxima.

Because $\|\hat{S}_n(\bar{\theta}) - S(\bar{\theta})\| \leq \sup_{\theta \in \Theta} \|\hat{S}_n({\theta}) - S({\theta})\|= \epsilon_{1,n}$,
we conclude that 
$$
\hat{\theta}_{n,M}-\bar{\theta} = d(\hat{\theta}_{n,M}, \cS) = O(\epsilon_{1,n}). 
$$

{\bf Step 2. Bounding the probability difference $\hat{q}_\ell-q_\ell$.}
We will prove that $\hat{q}_\ell-q_\ell  = O(\epsilon_{1,n}+\epsilon_{3,n}+\epsilon_{4,n})$. 
Note that we assume that Lemma \ref{lem::sep} holds throughout the entire proof so 
each $q_\ell$ corresponds to $\hat{q}_\ell$ after relabeling.
Recall that $q_\ell = \Pi(\mathcal{A}_\ell)$ and $\hat{q}_\ell = \hat{\Pi}_n(\hat{\mathcal{A}}_\ell)$.
By triangle inequality, 
\begin{equation}
|\hat{q}_\ell-q_\ell| = |\hat{\Pi}_n(\hat{\mathcal{A}}_\ell)-\Pi(\mathcal{A}_\ell)| \leq \underbrace{|\hat{\Pi}_n(\hat{\mathcal{A}}_\ell)- \hat{\Pi}_n(\mathcal{A}_\ell)|}_{(I)}+\underbrace{|\hat{\Pi}_n(\mathcal{A}_\ell)-\Pi(\mathcal{A}_\ell)|}_{(II)}.
\label{eq::pf01::01}
\end{equation}

Using the symmetric difference, $A\triangle B = (A\backslash B) \cup (B\backslash A)$,
the first part (I) can be bounded by
\begin{align*}
(I) &= |\hat{\Pi}_n(\hat{\mathcal{A}}_\ell)- \hat{\Pi}_n(\mathcal{A}_\ell)|\\
& \leq \hat{\Pi}_n(\hat{\mathcal{A}}_\ell\triangle \mathcal{A}_\ell)
\end{align*}

For any point $x\in \hat{\mathcal{A}}_\ell\triangle \mathcal{A}_\ell$, 
this point must be within the set $\mathcal{B}\oplus d_H(\mathcal{B}, \hat{\mathcal{B}})$.
Lemma~\ref{lem::boundary} implies $d_H(\mathcal{B}, \hat{\mathcal{B}}) = O(\epsilon_{1,n})$,
so
$$
\hat{\mathcal{A}}_\ell\triangle \mathcal{A}_\ell\subset \mathcal{B}\oplus  O(\epsilon_{1,n}).
$$


By the second assertion of assumption (A4) and the fact that $\epsilon_{1,n}\rightarrow 0$, 
$$
\hat{\Pi}_n(\hat{\mathcal{A}}_\ell\triangle \mathcal{A}_\ell) \leq \hat{\Pi}_n( \mathcal{B}\oplus  O(\epsilon_{1,n}))
\leq \Pi ( \mathcal{B}\oplus  O(\epsilon_{1,n}))+O(\epsilon_{4,n}).
$$
Moreover,
the third assertion of assumption (A4) implies that $\Pi (\cB\oplus  O(\epsilon_{1,n})) = O(\epsilon_{1,n})$. 
Therefore, we conclude that 
$$
(I) = O(\epsilon_{1,n}+\epsilon_{4,n}).
$$

For the quantity (II), the first assertion in assumption (A4) directly shows that 
$$
(II) = |\hat{\Pi}_n(\mathcal{A}_\ell)-\Pi(\mathcal{A}_\ell)| \leq \epsilon_{3,n}.
$$
Putting it altogether, equation \eqref{eq::pf01::01} implies
\begin{equation}
|\hat{q}_\ell-q_\ell| \leq (I) + (II) = O(\epsilon_{1,n}+\epsilon_{3,n}+\epsilon_{4,n}).
\label{eq::pf::q}
\end{equation}

{\bf Part 3. Bounding the probability $\bar{\theta} \in \cS^\pi_{M,\delta}$.}
Now we will show that given $\delta$, with a probability of at least
$1-\delta+O(\epsilon_{1,n}+\epsilon_{3,n}+\epsilon_{4,n})$, 
$\bar{\theta}$, the population target of $\hat{\theta}_{n,M}$, 
will be one of the elements of $\cS^{\pi}_{M,\delta}$.
Recall 
that $N^\pi_{M,\delta} = \min\{N: (1-Q_{N}^\pi)^M\leq \delta\} $ is the number of elements in $\cS^\pi_{M,\delta}$. 

By the definition of $N^\pi_{M,\delta}$, we have 
$$
L(m_1)\geq \cdots\geq L(m_{N^\pi_{M,\delta}-1}) > L(m_{N^\pi_{M,\delta}})\geq \cdots \geq L(m_K).
$$
The fact that the gap
$L(m_{N^\pi_{M,\delta}-1})-L(m_{N^\pi_{M,\delta}}) $ is positive
implies that 
when $\epsilon_{0,n} ,\epsilon_{1,n}$ are sufficiently small, 
we have 
$$
\min_{\ell = 1,\cdots, {N^\pi_{M,\delta}-1}}\hat{L}_n(\hat{m}_\ell) > \max_{\ell = {N^\pi_{M,\delta}},\cdots, K}\hat{L}_n(\hat{m}_\ell).
$$

Thus, the quantity $$
\hat{Q}_{N^\pi_{M,\delta}} = \sum_{\ell=1}^{N^\pi_{M,\delta}} \hat{q}_\ell = \sum_{\ell=1}^{N^\pi_{M,\delta}} q_\ell + O(\max_\ell\|\hat{q}_\ell-q_\ell\|)
= Q_{N^\pi_{M,\delta}} + O(\max_\ell\|\hat{q}_\ell-q_\ell\|). 
$$
By the analysis of part II,
$$
\max_\ell\|\hat{q}_\ell-q_\ell\| = O(\epsilon_{1,n}+\epsilon_{3,n}+\epsilon_{4,n}),
$$
so 
$$
\hat{Q}_{N^\pi_{M,\delta}} = Q_{N^\pi_{M,\delta}}+O(\epsilon_{1,n}+\epsilon_{3,n}+\epsilon_{4,n}).
$$
Because $N^\pi_{M,\delta}$ is from
$$
N^\pi_{M,\delta} = \min\{N: (1-Q_{N}^\pi)^M\leq \delta\},
$$
this implies that 
\begin{equation}
\begin{aligned}
P(\hat{\theta}_{n,M} \mbox{ is not one of } \hat{m}_1,\cdots, \hat{m}_{N^{\pi}_{M,\delta}}) &=
(1-\hat{Q}_{N^\pi_{M,\delta}})^M \\
&= (1-Q_{N^\pi_{M,\delta}}+O(\epsilon_{1,n}+\epsilon_{3,n}+\epsilon_{4,n}))^M  \\
& =(1-Q_{N^\pi_{M,\delta}})^M +  O(\epsilon_{1,n}+\epsilon_{3,n}+\epsilon_{4,n})\\
&\leq \delta +O(\epsilon_{1,n}+\epsilon_{3,n}+\epsilon_{4,n})
\end{aligned}
\label{eq::pf::eps1}
\end{equation}
because $M$ is fixed.
Namely, with a probability of at least $1-\delta+ O(\epsilon_{1,n}+\epsilon_{3,n}+\epsilon_{4,n})$,
$\hat{\theta}_{n,M}$ will be an element of $\{\hat{m}_1,\cdots, \hat{m}_{N^\pi_{M,\delta}}\}$.
Therefore, the target $\bar{\theta} $ will be an element of $\{m_1,\cdots, m_{N^\pi_{M,\delta}}\} = \cS^\pi_{M,\delta}$
(by Lemma~\ref{lem::sep}), i.e.,
$$
\bar{\theta} \in \cS^\pi_{M,\delta}. 
$$

Combining the result in Part I, we conclude that 
\begin{equation}
d(\hat{\theta}_{n,M},\cS^\pi_{M,\delta}) = \|\hat{\theta}_{n,M} - \bar{\theta}\| = O(\epsilon_{1,n})
\label{eq::pf::bartheta}
\end{equation}
with a probability of at least $1-\delta-\xi_n + O(\epsilon_{1,n}+\epsilon_{3,n}+\epsilon_{4,n})$,
which completes the proof.
Note that we have an extra $\xi_n$ here 
because we need Lemma~\ref{lem::sep} to be true, which happens
with a probability of $1-\xi_n$.

{\bf Part 4. Converting $\epsilon_{j,n}$ to $\eta_{n,j}(t)$.}
In Part 1 and 3, we saw that a sufficient condition to
$$
d(\hat{\theta}_{n,M},\cS^\pi_{M,\delta}) = \|\hat{\theta}_{n,M} - \bar{\theta}\|
$$
is (i) Lemma~\ref{lem::sep} holds, and (ii) $\hat{\theta}_{n,M} \mbox{ is one of } \hat{m}_1,\cdots, \hat{m}_{N^{\pi}_{M,\delta}}$.
Let $C_0$ be the constant defined in Lemma~\ref{lem::sep}.
Define the events
\begin{align*}
E_1 &= \{\mbox{$\epsilon_{1,n},\epsilon_{2,n}<C_0$}\}\\
E_2 &=\{\mbox{$\hat{\theta}_{n,M} \mbox{ is one of } \hat{m}_1,\cdots, \hat{m}_{N^{\pi}_{M,\delta}}$}\}.
\end{align*}
Then 
\begin{align*}
P(d(\hat{\theta}_{n,M},\cS^\pi_{M,\delta}) = \|\hat{\theta}_{n,M} - \bar{\theta}\|) \geq P(E_1,E_2).
\end{align*}
Note that equation \eqref{eq::pf::eps1} gives an upper bound on the probability of $E_2^c$.

Recall that $\eta_{n,j}(t) \geq P(\epsilon_{j,n}>t)$. 
Thus, for any sequence $t_n\rightarrow 0$,
we can decompose the probability
\begin{align*}
P(E_1,E_2) &  = 1- P(E_1^c \cup E_2^c)\\
&\geq 1- P(E_1^c)-P(E_2^c)\\
&\geq 1-\xi_n - \underbrace{P(E_2^c, \epsilon_{j,n}\leq t_n, j=1,3,4)}_\text{equation \eqref{eq::pf::eps1}} - P(E_2^c, \epsilon_{j,n}> t_n, j=1,3,4)\\
&\geq 1-\xi_n - \delta +O(t_n) -  P(\epsilon_{j,n}> t_n, j=1,3,4)\\
&\geq 1-\xi_n - \delta +O(t_n) -  \sum_{j=1,3,4}\eta_{n,j}(t_n).
\end{align*}
Thus, $d(\hat{\theta}_{n,M},\cS^\pi_{M,\delta}) = \|\hat{\theta}_{n,M} - \bar{\theta}\|$
when $E_1$ and $E_2$ are true, which occurs with a probability of at least
$$
1-\xi_n - \delta +O(t_n) -  \sum_{j=1,3,4}\eta_{n,j}(t_n).
$$

\end{proof}

Before proving Theorem~\ref{thm::likelihood},
we first recall a powerful result from \cite{talagrand1994sharper}
about a collection of Lipschitz functions.
Note that 
this lemma is from Theorem 1.3 of \cite{talagrand1994sharper} with the fact that 
the $\epsilon$-bracketing number of a collection of Lipschitz functions
is of the order $O(\epsilon^{-d})$ (see, e.g., Example 19.7 of \citealt{van1998asymptotic}),
where $d$ is the number of parameters.

\begin{lemma}[Talagrand's inequality for Lipschitz family; Theorem 1.3 in \cite{talagrand1994sharper}]  
Let $\mathcal{F} =\{f_\theta:\theta\in\Theta\}$
be a collection of functions indexed by $\theta\in\Theta\subset\R^d$.
If $\Theta$ is a compact set with $\sup_{\theta\in\Theta}|f_\theta(x)|\leq1$ and
$$
|f_{\theta_1}(x)-f_{\theta_2}(x)| \leq m(x) \|\theta_1-\theta_2\|
$$
with $\E(m^2(X_1))< \infty$,
then
$$
P\left(\sup_{\theta\in{\Theta}}\left|\frac{1}{n}\sum_{i=1}^n f_\theta(X_i) - \E(f_\theta(X_1))\right|>t\right)\leq \left(A t \sqrt{n}\right)^{\nu} e^{-2nt^2}
$$
for some positive number $A,\nu$.
Therefore, when $n>\nu$ and $t>\sqrt{\nu/n}$, 
$$
P\left(\sup_{\theta\in{\Theta}}\left|\frac{1}{n}\sum_{i=1}^n f_\theta(X_i) - \E(f_\theta(X_1))\right|>t\right)\leq A'n^{\nu'} e^{-nt^2},
$$
where $A' = A^\nu$ and $\nu' = \nu/2$.
\label{lem::unif}
\end{lemma}
%
%


\begin{proof}[ of Theorem~\ref{thm::likelihood}]
By Lemma~\ref{lem::unif}
and Assumption (A3L),
$\epsilon_{1,n}$ and $\epsilon_{2,n}$
satisfies a concentration inequality described in Lemma~\ref{lem::unif}. 
Thus,
the quantities $\eta_{n,j}$ in Theorem~\ref{thm::precisionset} can be chosen as
\begin{equation}
\eta_{n,j}(t) = A'_{j,1}n^{\nu_j} e^{-A'_{j,2}nt^2}
\label{eq::pf::eta1}
\end{equation}
for some positive number $A'_{j,1}, A'_{j,2},$ and $\nu_j$
for $j=1,2.$
Moreover, Assumption (A4L) directly impose a rate on $\eta_{n,j}(t)$
for $j=3,4$, 
so equation \eqref{eq::pf::eta1} holds for $j=3,4$. 

By choosing $t_n = \sqrt{\frac{k\log n}{n}}$ with $k>0$ in Theorem~\ref{thm::precisionset}
and
using equation \eqref{eq::pf::bartheta},
we conclude
$$
d(\hat{\theta}_{n,M},\cS^\pi_{M,\delta}) = \|\hat{\theta}_{n,M} - \bar{\theta}\|
$$
with a probability of at least
\begin{equation}
1- \delta-\xi_n +O\left(\sqrt{\frac{k\log n }{n}}\right) +O(n^{\nu'}e^{-A'k\log n}) = 1-\delta- \xi_n +O\left(\sqrt{\frac{\log n}{n}}\right)  + O(n^{\nu' - A'k})
\label{eq::pf::prob}
\end{equation}
for some positive numbers $A'$ and $\nu'$.
Note that $\bar{\theta}$ is one element in $\cS^\pi_{M,\delta}$ defined at the beginning of the proof of Theorem~\ref{thm::precisionset}.
Because we can pick arbitrary large positive number $k$,
we choose $k$ so that $\nu'-A'k<-1/2$, so the probability in equation \eqref{eq::pf::prob}
becomes
$$
1-\delta- \xi_n +O\left(\sqrt{\frac{\log n}{n}}\right).
$$

%
%
Thus, what remains to prove is the probability $$
\xi_n = P(\epsilon_{1,n}>C_0\mbox{ or }\epsilon_{2,n}>C_0),
$$
where $C_0$ is the constant in Lemma~\ref{lem::sep}.
Under assumption (A1) and (A2),
this is true when $\sup_{\theta\in\Theta}\left\|\nabla \hat{L}_n(\theta) - \nabla L(\theta)\right\|_{\max}$ and 
$\sup_{\theta\in\Theta}\left\|\nabla\nabla \hat{L}_n(\theta) - \nabla\nabla L(\theta)\right\|_{\max}$ 
are smaller than certain constant $C_0$. 
By Lemma~\ref{lem::unif},
this holds with a probability no less than $1-c_0 e^{-C^2_0n}$ for some positive constants $c_0$. 
Namely, $\xi_n \leq c_0 e^{-C^2_0n}$. 
This quantity is much smaller than the rate $O\left(\sqrt{\frac{\log n}{n}}\right)$ so we ignore it. 
Therefore, we conclude that
$$
d(\hat{\theta}_{n,M},\cS^\pi_{M,\delta}) = \|\hat{\theta}_{n,M} - \bar{\theta}\|
$$
with a probability of at least
$$
 1-\delta+ O\left(\sqrt{\frac{\log n}{n}}\right).
$$

Using equation
\eqref{eq::asymp},
$$
\hat{\theta}_{n,M} - \bar{\theta} = O\left(\|\hat{S}_n(\bar{\theta}) - S(\bar\theta)\|\right) = O_P\left(\sqrt{\frac{1}{n}}\right),
$$
which completes the proof.

%
%

\end{proof}


\begin{proof}[ of Theorem~\ref{thm::AC}]

The proof is very simple -- we will first derive a Berry-Esseen bound between the distribution of $\sqrt{n}(\tau(\hat{\theta}_{n,M}) - \tau(\bar{\theta}))$
and a normal distribution. 
Then we show that plug-in a variance estimator leads to a CI with the desired coverage. 

Recall from equation \eqref{eq::asymp} and Theorem~\ref{thm::likelihood},
$$
\hat{\theta}_{n,M}-\bar{\theta} 
 = H^{-1}(\bar{\theta}) (\hat{S}_n(\bar{\theta}) - S(\bar\theta))+ O_P\left(\frac{1}{n}\right)
$$
with a probability of at least $1-\delta+O\left(\sqrt{\frac{\log n}{n}}\right)$

To convert this into a bound of $\tau(\hat{\theta}_{n,M}) - \tau(\bar{\theta})$, we 
use the Taylor expansion and assumption (T):
\begin{align*}
\tau(\hat{\theta}_{n,M}) - \tau(\bar{\theta}) & = g^T_\tau(\bar{\theta})(\hat{\theta}_{n,M}-\bar{\theta} ) + O_P(\|\hat{\theta}_{n,M}-\bar{\theta} \|^2)\\
& = g^T_\tau(\bar{\theta})H^{-1}(\bar{\theta}) (\hat{S}_n(\bar{\theta}) - S(\bar\theta))+ O_P\left(\frac{1}{n}\right).
\end{align*}
Using the fact that $\hat{S}_n(\theta) = \frac{1}{n}\sum_{i=1}^n\nabla L(\theta|X_i)$ and $S(\theta) = \nabla L(\theta) = \E(\nabla L(\theta|X_1))$,
we can rewrite the above expression as
\begin{equation}
\begin{aligned}
\tau(\hat{\theta}_{n,M}) - \tau(\bar{\theta}) &= \frac{1}{n}\sum_{i=1}^ng^T_\tau(\bar{\theta})H^{-1}(\bar{\theta})(\nabla L(\bar\theta|X_i) - \E(\nabla L(\bar\theta|X_i)))+ O_P\left(\frac{1}{n}\right)\\
& = \frac{1}{n}\sum_{i=1}^n (Z_i  - \E(Z_i))+ O_P\left(\frac{1}{n}\right),
\end{aligned}
\label{eq::tau1}
\end{equation}
where $Z_i  = g^T_\tau(\bar{\theta})H^{-1}(\bar{\theta})\nabla L(\bar\theta|X_i)$. 

The variance of the estimator $\tau(\hat{\theta}_{n,M})$ is
$$
\nu^2 = {\sf Var}(\tau(\hat{\theta}_{n,M})) = g^T_\tau(\bar{\theta})H^{-1}(\bar{\theta})\E\left(S(\bar\theta|X_1) S^T(\bar\theta|X_1)\right) H^{-1}(\bar{\theta})g_\tau(\bar{\theta}),
$$
where $S(\theta|X_i)  = \nabla L(\theta|X_i)$.
When constructing $\hat{C}_{n,\alpha}$, we are using an estimated variance 
$$
\hat{\nu}^2 = g_\tau^T(\hat\theta_{n,M}) \hat{\sf Cov}(\hat{\theta}_{n,M})g_\tau(\hat\theta_{n,M}). 
$$
Because of assumption (A5), $\hat{\nu}^2 - \nu^2 = O_P\left(\frac{1}{\sqrt{n}}\right)$, which implies
\begin{equation}
\begin{aligned}
\frac{\tau(\hat{\theta}_{n,M}) - \tau(\bar{\theta})}{\hat{\nu}} 
& = \frac{\tau(\hat{\theta}_{n,M}) - \tau(\bar{\theta})}{\nu}\left(1+O_P\left(\frac{1}{\sqrt{n}}\right)\right)\\
& = \frac{1}{n\nu}\sum_{i=1}^n (Z_i  - \E(Z_i))+ O_P\left(\frac{1}{{n}}\right)\\
& = \frac{1}{n}\sum_{i=1}^n (Z^\dagger_i  - \E(Z^\dagger_i))+ O_P\left(\frac{1}{{n}}\right),
\end{aligned}
\label{eq::tau2}
\end{equation}
where $Z^\dagger_i = Z_i/\nu$ and ${\sf Var}(Z^\dagger_i) = \frac{\nu^2}{\nu^2} = 1$.

Assumption (A1), (A5), and (T) together imply that $\E|Z^\dagger_i|^3 <\infty $, so by the Berry-Esseen theorem \citep{berry1941accuracy,esseen1942liapounoff}
\begin{equation}
\sup_t\left|P\left( \frac{1}{\sqrt{n}}\sum_{i=1}^n Z^\dagger_i  - \E(Z^\dagger_i)<t \right)  - \Phi(t)\right| \leq \frac{\E|Z^\dagger_i|^3}{\sqrt{n}} = O\left(\frac{1}{\sqrt{n}}\right),
\label{eq::BE1}
\end{equation}
where $\Phi(t)$ is the CDF of a standard normal distribution.
Therefore, equations \eqref{eq::tau2} and \eqref{eq::BE1} together imply
\begin{align*}
\sup_t\left|P\left( \sqrt{n}\left(\frac{\tau(\hat{\theta}_{n,M}) - \tau(\bar{\theta})}{\hat{\nu}}\right) <t \right)  - \Phi(t)\right| 
& = O\left(\frac{1}{\sqrt{n}}\right).
\end{align*}
Therefore, 
the chance that the CI $C_{n,\alpha}$ covers $\tau(\bar{\theta})$
is $1-\alpha +O\left(\frac{1}{\sqrt{n}}\right) $.

By Theorem~\ref{thm::likelihood},
the chance that $\tau(\bar{\theta})\in \tau(\cS^\pi_{M,\delta})$ is at least
$1-\delta+O\left(\sqrt{\frac{\log n}{n}}\right)$.
Thus, the probability that both events happen is at least
$$
1-\alpha-\delta+O\left(\sqrt{\frac{\log n}{n}}\right),
$$
which implies
$$
P(C_{n,\alpha}\cap \tau(\cS^\pi_{M,\delta})\neq \emptyset) \geq 
1-\alpha-\delta+O\left(\sqrt{\frac{\log n}{n}}\right),
$$
which is the desired result of the first part.

For the second claim,
it is easy to see that
when $\delta = (1-q_{1}^\pi)^M$, $\cS^\pi_{M,\delta} = \{\theta_{MLE}\}$
so the result becomes
$$
P(\tau_{MLE}\in C_{n,\alpha})  = P(C_{n,\alpha}\cap \tau(\theta_{MLE})\neq \emptyset) \geq 
1-\alpha-(1-q_{1}^\pi)^M+O\left(\sqrt{\frac{\log n}{n}}\right),
$$
which completes the proof.

\end{proof}

\begin{proof}[ of Theorem~\ref{thm::BC2}]

Recall that $\hat\cS$ is the collection of all local maxima
of $\hat{L}_n$
and $\hat{\theta}_{n,M}^*$ is the estimator from applying
the gradient ascent algorithm to the bootstrap log-likelihood function $\hat{L}_n^*$
with the starting point being $\hat{\theta}_{n,M}$.
Define the event
$$
E^*_n = \left\{\|\hat{\theta}_{n,M}^*-\hat{\theta}_{n,M}\| = \min_{\theta \in \hat{\cS}}\|\hat{\theta}_{n,M}^* - \theta\|\right\}.
$$
$E^*_n$ is true when
Lemma~\ref{lem::sep} holds for the the critical points of $\hat{L}_n$ and $\hat{L}^*_n$. 
Lemma 16 of \cite{chazal2014robust} shows that a sufficient condition for Lemma~\ref{lem::sep} 
is that the gradient and Hessian of the two functions are sufficiently close. 
Using a similar argument as the proof of Theorem~\ref{thm::likelihood}, 
there are positive constants $c_2,c_3$ such that
\begin{equation}
P(E^*_n|X_1,\cdots, X_n) \leq 1-c_2 e^{-c_3n}.
\label{eq::En_bt}
\end{equation}
This is because a sufficient condition
to the event $E^*_n$
is that $\sup_{\theta\in\Theta}\left\|\nabla \hat{L}^*_n(\theta) - \nabla \hat L_n(\theta)\right\|_{\max}$ and 
$\sup_{\theta\in\Theta}\left\|\nabla\nabla \hat{L}^*_n(\theta) - \nabla\nabla \hat L_n(\theta)\right\|_{\max}$ 
are sufficiently small. 
Conditioned on the original data, both terms
have a similar exponential concentration bound because
the bootstrap samples are IID from the empirical measure and we can apply the Talagrand's inequality
(Lemma~\ref{lem::unif}).
So equation \eqref{eq::En_bt} holds.
Because this event occurs with such a high probability (a probability that tending to $1$ with
an exponential rate),
throughout the proof we will assume this event is true.



Given the original sample being fixed,
the relation between $\hat{\theta}_{n,M}^*$ and $\hat{\theta}_{n,M}$ 
is the same as the relation between $\hat{\theta}_{n,M}$ and $\bar{\theta}$,
where $\bar{\theta}$ is a local maximum of $L$. 
Thus, using the same derivation as the proof of Theorem~\ref{thm::AC}, we can rewrite equation \eqref{eq::tau1} as
\begin{equation}
\begin{aligned}
\tau(\hat{\theta}^*_{n,M}) - \tau(\hat{\theta}_{n,M}) &= \frac{1}{n}\sum_{i=1}^ng^T_\tau(\hat{\theta}_{n,M})\hat{H}_n^{-1}(\hat{\theta}_{n,M})(\nabla L(\hat{\theta}_{n,M}|X^*_i) - \hat{\E}_n(\nabla L(\hat{\theta}_{n,M}|X^*_i)))+ O_P\left(\frac{1}{n}\right)\\
& = \frac{1}{n}\sum_{i=1}^n (Z^*_i  - \hat{\E}_n(Z^*_i))+ O_P\left(\frac{1}{n}\right),
\end{aligned}
\label{eq::BC2::01}
\end{equation}
where $\hat{\E}_n(\cdot) = \E(\cdot|X_1,\cdots,X_n)$ is the expectation over the randomness of the bootstrap process given $X_1,\cdots,X_n$
and $\hat{H}_n(\theta)$ is the sample version of $H(\theta)$ and
$Z^*_i = g^T_\tau(\hat{\theta}_{n,M})\hat{H}_n^{-1}(\hat{\theta}_{n,M})\nabla L(\hat{\theta}_{n,M}|X^*_i)$.

Similar to the proof of Theorem~\ref{thm::AC},
we apply the Berry-Esseen theorem to equation \eqref{eq::BC2::01}, which leads to
\begin{align*}
\sup_t\bigg|P\left( \frac{1}{\sqrt{n}}\sum_{i=1}^n Z^*_i  - \hat{\E}_n(Z^*_i)<t |X_1,\cdots,X_n\right)  - &\Phi_{{\sf Var}(Z^*_i|X_1,\cdots,X_n)}(t)\bigg| \\
&\leq c_{BE}\frac{\hat{\E}_n|Z^*_1|^3}{\sqrt{n}}
\end{align*}
where $\Phi_{\sigma^2}(t)$ is the CDF of a normal distribution with mean $0$ and variance $\sigma^2$
and $c_{BE}$ is a constant.
Using the fact that ${\sf Var}(Z^*_i|X_1,\cdots,X_n) = \hat{\nu}^2$
and 
$$
\sup_t\left|\Phi_{\nu/\hat{\nu}^2}(t)  - \Phi_{1}(t)\right|= O( |\hat{\nu}^2 - \nu^2|)
$$
from the Taylor's theorem,
the Berry-Esseen bound can  be rewritten as
\begin{align*}
\sup_t\bigg|P\left( \frac{1}{\sqrt{n}}\sum_{i=1}^n Z^*_i  - \hat{\E}_n(Z^*_i)<t |X_1,\cdots,X_n\right)  - &\Phi_{\nu^2}(t)\bigg| \\
&\leq c_{BE}\frac{\hat{\E}_n|Z^*_1|^3}{\sqrt{n}} + O( |\hat{\nu}^2 - \nu^2|).
\end{align*}

Using Jensen's inequality with $\E|\cdot| \geq |\E(\cdot)|$,
we can rewrite the above inequality as
\begin{align*}
\sup_t\bigg|&P\left( \frac{1}{\sqrt{n}}\sum_{i=1}^n Z^*_i  - \hat{\E}_n(Z^*_i)<t\right)  - \Phi_{\nu^2}(t)\bigg| \\
&= \sup_t\bigg|\E\left(P\left( \frac{1}{\sqrt{n}}\sum_{i=1}^n Z^*_i  - \hat{\E}_n(Z^*_i)<t |X_1,\cdots,X_n\right)\right)  - \Phi_{\nu^2}(t)\bigg| \\
&\leq \E\left(\sup_t\bigg|P\left( \frac{1}{\sqrt{n}}\sum_{i=1}^n Z^*_i  - \hat{\E}_n(Z^*_i)<t |X_1,\cdots,X_n\right)  - \Phi_{\nu^2}(t)\bigg|\right)\\
&\leq \E\left(c_{BE}\frac{\hat{\E}_n|Z^*_1|^3}{\sqrt{n}} + O( |\hat{\nu}^2 - \nu^2|)\right)\\
& = O\left(\sqrt{\frac{1}{n}}\right).
\end{align*}

By equations \eqref{eq::tau1} and \eqref{eq::BC2::01},
the above inequality implies
\begin{align*}
\sup_t\bigg|P\bigg(\sqrt{n}\left(\tau(\hat{\theta}^*_{n,M}) - \tau(\hat{\theta}_{n,M})\right)&<t\bigg)\\
  &-P\left( \sqrt{n}\left(\tau(\hat{\theta}_{n,M}) - \tau(\bar{\theta})\right) <t \right) \bigg| = O\left(\frac{1}{\sqrt{n}}\right).
\end{align*}

Therefore, every quantile of $\tau(\hat{\theta}^*_{n,M}) - \tau(\hat{\theta}_{n,M})$ approximates the corresponding quantile
of $\tau(\hat{\theta}_{n,M}) - \tau(\bar{\theta})$ with an error $O\left(\frac{1}{\sqrt{n}}\right)$
so $\hat{C}^*_{n,\alpha}$ has a probability $1-\alpha+O\left(\frac{1}{\sqrt{n}}\right)$ of containing $\tau(\bar{\theta})$.
Let $E_1$ denotes the event such that the CI $\hat{C}^*_{n,\alpha}$ covers $\tau(\bar{\theta})$.
Then the above analysis shows that
\begin{equation}
P(E_1) = P(\tau(\bar{\theta})\in \hat{C}^*_{n,\alpha}) \geq 1-\alpha+O\left(\frac{1}{\sqrt{n}}\right).
\label{eq::pf::E1En}
\end{equation}

Let $E_2$ denotes the event that $\bar{\theta} \in \cS^\pi_{M,\delta} $.
Then when both $E_1,E_2$ are true, $\hat{C}^*_{n,\alpha}\cap \tau(\cS^\pi_{M,\delta})\neq \emptyset$.
So the coverage of 
$\hat{C}^*_{n,\alpha}$ can be derived as
\begin{align*}
P(\hat{C}^*_{n,\alpha}\cap \tau(\cS^\pi_{M,\delta})\neq \emptyset) 
& \geq P(E_1,E_2)\\
& \geq 1- P(E_1^c)-P(E_2^c)\\
&= P(E_1) -P(E_2^c).
\end{align*}
In the proof of Theorem~\ref{thm::AC}, we have shown
$$
P(E_2) = P(\bar{\theta} \in \cS^\pi_{M,\delta} ) \geq 1-\delta + O\left(\sqrt{\frac{\log n}{n}}\right).
$$
This, together with equation 
\eqref{eq::pf::E1En}, implies
\begin{align*}
P(\hat{C}^*_{n,\alpha}\cap \tau(\cS^\pi_{M,\delta})\neq \emptyset) 
&\geq P(E_1) -P(E_2^c)\\
&\geq 1-\alpha-\delta+O\left(\sqrt{\frac{\log n}{n}}\right),
\end{align*}
which completes the proof.

\end{proof}


Before proving Theorem~\ref{thm::EM},
we first recall a useful theorem from \cite{balakrishnan2017statistical}.

\begin{theorem}[Theorem 4 in \cite{balakrishnan2017statistical}]
Let $M(\theta) = {\sf argmax}_{\theta'}Q(\theta'|\theta)$ be the updated point from the population EM algorithm 
with a starting point $\theta$.
Assume that for every $\theta_1,\theta_2 \in B(\theta_{MLE},r_1)$,
the following holds with some $\lambda>\gamma\geq 0$:
\begin{itemize}
\item ($\lambda$-strongly concave) 
$$
q(\theta_1)-q(\theta_2) - (\theta_1-\theta_2)^T\nabla q(\theta_2)\leq -\frac{\lambda}{2}\|\theta_1-\theta_2\|^2_2.
$$
\item (First-order stability) 
$$
\|\nabla Q(M(\theta)|\theta)-\nabla Q(M(\theta)|\theta_{MLE})\|_2\leq \gamma\|\theta-\theta_{MLE}\|.
$$

\end{itemize}
Then the EM algorithm is contractive within $B(\theta_{MLE},r_1)$ toward $\theta_{MLE}$.
Namely, 
$$
B(\theta_{MLE},r_1)\subset\mathcal{A}^{EM}(\theta_{MLE}).
$$
\label{thm::EM::contract}
\end{theorem}

\begin{proof}[ of Theorem~\ref{thm::EM}]

To prove Theorem~\ref{thm::EM}, we first show that 
the sample EM algorithm is contractive to $\hat{\theta}_{MLE}$ within $B(\theta_{MLE},r_0/3)$
and then we calculate the probability of selecting an initial point within this ball. 

Under assumption (EM2-1),
the function $q(\theta)$ is $\lambda_0$-strongly concave within 
$B(\theta_{MLE},r_0)$, where $\lambda_0$ is the constant in assumption (EM2-1).
To see this, 
the Taylor's theorem implies that for any $\theta_1,\theta_2\in B(\theta_{MLE},r_0)$,
\begin{equation}
\begin{aligned}
q(\theta_1)-q(\theta_2) &= (\theta_1-\theta_2)^T \nabla q(\theta_2) + \frac{1}{2}(\theta_1-\theta_2)^T\int_{s=0}^{s=\theta_2-\theta_1}\nabla\nabla q(\theta_1 + s)ds\\
&= (\theta_1-\theta_2)^T \nabla q(\theta_2) +  \frac{1}{2}(\theta_1-\theta_2)^T  \left(\int_{t=0}^{t=1}\nabla\nabla q(\theta_1 + t (\theta_2-\theta_1))dt\right) (\theta_1-\theta_2)\\
&\leq  (\theta_1-\theta_2)^T \nabla q(\theta_2) +\frac{1}{2} \|\theta_1-\theta_2\|^2 \sup_{\theta\in B(\theta_{MLE},r_0)} \lambda_{\max} (\nabla\nabla q(\theta))
\end{aligned}
\label{eq::pf::q1}
\end{equation}
because for any $t\in [0,1]$, $\theta_1 + t (\theta_2-\theta_1)\in B(\theta_{MLE},r_0)$. 
Recall that assumption (EM2-1) requires 
$$
\sup_{\theta\in B(\theta_{MLE},r_0)} \lambda_{\max} (\nabla\nabla q(\theta)) \leq -\lambda_0<0.
$$
After rearrangement, the equation \eqref{eq::pf::q1} becomes
$$
q(\theta_1)-q(\theta_2) - (\theta_1-\theta_2)^T \nabla q(\theta_2)\leq -\frac{\lambda_0}{2} \|\theta_1-\theta_2\|^2,
$$
which implies that $q(\theta)$ is $\lambda_0$-strongly concave. 

Moreover, assumption (EM2-2) implies a first-order stability
with $\gamma = \kappa_0$, where $\kappa_0$ is defined in assumption (EM2-2). 
To see this, again we use the Taylor's theorem
\begin{equation}
\begin{aligned}
\nabla Q(M(\theta)|\theta)&-\nabla Q(M(\theta)|\theta_{MLE}) \\
&= \int_{s=0}^{s= \theta-\theta_{MLE}} \nabla\nabla Q(M(\theta)|\theta_{MLE} +s)ds\\
&= (\theta-\theta_{MLE})^T\int_{t=0}^{t= 1} \nabla\nabla Q(M(\theta)|\theta_{MLE} +t(\theta-\theta_{MLE}))dt.
\end{aligned}
\label{eq::pf::qq1}
\end{equation}
Because for every $t\in[0,1]$, $\theta_{MLE} +t(\theta-\theta_{MLE})\in B(\theta_{MLE},r_0)$,
assumption (EM2-2) implies
$$
\left\|\int_{t=0}^{t= 1} \nabla\nabla Q(M(\theta)|\theta_{MLE} +t(\theta-\theta_{MLE}))dt\right\|_2 \leq \sup_{\theta,\theta'\in B(\theta_{MLE},r_0)}\|\nabla \nabla Q(\theta|\theta')\|_2\leq\kappa_2.
$$
Thus, taking the 2-norm on both sides of equation \eqref{eq::pf::qq1} leads to 
$$
\|\nabla Q(M(\theta)|\theta)-\nabla Q(M(\theta)|\theta_{MLE})\| \leq \kappa_0 \|\theta-\theta_{MLE}\|,
$$
which proves the first-order stability.

With the two properties (strong concavity and first-order stability),
we will prove the theorem by using the following events:
\begin{align*}
E_1 & = \left\{\|\hat{\theta}_{MLE}-\theta_{MLE}\|\leq r_0/3\right\}\\
E_2 & = \{\|\nabla\nabla \hat{q}(\theta)- \nabla\nabla q(\theta)\|_2\leq \lambda_0/2\}\\
E_3 & = \{\|\nabla_\theta\nabla_{\theta'} \hat{Q}(\theta|\theta')-\nabla_\theta\nabla_{\theta'} Q(\theta|\theta')\|\leq \kappa_0/2\}\\
E_4& = \left\{\left|\hat{\Pi}_n\left(B\left(\theta_{MLE}, \frac{r_0}{3}\right)\right)-\Pi\left(B\left(\theta_{MLE}, \frac{r_0}{3}\right)\right)\right|\leq q_{EM}\right\}.
\end{align*}

When $E_1$,
$E_2$, and $E_3$ occur, the sample EM algorithm
is contractive within
$B(\hat{\theta}_{MLE}, 2r_0/3)\subset B(\theta_{MLE},r_0)$
by Theorem~\ref{thm::EM::contract} 
because $\hat{q}(\theta)$ is $\lambda_0/2$-strongly concave
and the first-order stability holds with $\gamma= \kappa_0/2$
in $B(\hat{\theta}_{MLE}, 2r_0/3)$. 
Thus, Theorem~\ref{thm::EM::contract} implies that 
$$
B(\hat\theta_{MLE},2r_0/3)\subset\hat{\mathcal{A}}^{EM}(\hat\theta_{MLE}),
$$
where $\hat{\mathcal{A}}^{EM}(\hat\theta_{MLE})$ is the basin of attraction of $\hat{\theta}_{MLE}$
using the gradient of $\hat{L}_n$.

Moreover, under $E_1$, $\|\hat{\theta}_{MLE}-\theta_{MLE}\|\leq r_0/3$
so the ball
$$
B(\theta_{MLE},r_0/3)\subset  B(\hat\theta_{MLE},2r_0/3)\subset\hat{\mathcal{A}}^{EM}(\hat\theta_{MLE}).
$$
Thus, when we initialize the EM algorithm many times, as long as one initial point falls within $B(\theta_{MLE},r_0/3)$,
$\hat{\theta}^{EM}_{n,M} = \hat{\theta}_{MLE}$.

Because we choose the initial point from $\hat{\Pi}_n$, 
the chance of having an initial point within $B(\theta_{MLE},r_0/3)$
is 
$\hat{\Pi}_n(B(\theta_{MLE},r_0/3)). $
Thus, 
after $M$ initialization, 
the chance of having at least one initial point within $B(\theta_{MLE},r_0/3)$
is 
$$
1 - \left(1-\hat{\Pi}_n\left(B\left(\theta_{MLE}, \frac{r_0}{3}\right)\right)\right)^M.
$$
When $E_4$ occurs,
\begin{align*}
\hat{\Pi}_n\left(B\left(\theta_{MLE}, \frac{r_0}{3}\right)\right)) &= \underbrace{\Pi\left(B\left(\theta_{MLE}, \frac{r_0}{3}\right)\right)}_{=2q_{EM}} - \underbrace{\Pi\left(B\left(\theta_{MLE}, \frac{r_0}{3}\right)\right)+\hat{\Pi}_n\left(B\left(\theta_{MLE}, \frac{r_0}{3}\right)\right)}_{\leq q_{EM}}\\
&\geq 2q_{EM}- q_{EM} =q_{EM}.
\end{align*}
Therefore,
$$
P\left(\hat{\theta}^{EM}_{n,M} = \hat{\theta}_{MLE}|E_1,E_2,E_3,E_4\right) \geq 1 - \left(1-q_{EM}\right)^M.
$$
Using the above inequality, we can bound the probability
\begin{equation}
\begin{aligned}
P\left(\hat{\theta}^{EM}_{n,M} = \hat{\theta}_{MLE}\right)
&\geq
P\left(E_1,E_2,E_3,E_4, \hat{\theta}^{EM}_{n,M} = \hat{\theta}_{MLE}\right)\\
&= P\left(\hat{\theta}^{EM}_{n,M} = \hat{\theta}_{MLE}|E_1,E_2,E_3,E_4\right)P(E_1,E_2,E_3,E_4)\\
&\geq \left(1 - \left(1-q_{EM}\right)^M\right)\cdot P(E_1,E_2,E_3,E_4).
\end{aligned}
\label{eq::pf::EM::prob}
\end{equation}

Because 
\begin{align*}
P(E_1,E_2,E_3,E_4) &= 1-P(E_1^c\cup E_2^c\cup E^c_3\cup E^c_4)\\
&\geq 1- P(E_1^c)-P(E_2^c)-P(E_3^c)-P(E_4^c),
\end{align*}
we only need to bound each $P(E_j^c)$. 
For event $E_1$,
a slightly modification of the derivation of
equation \eqref{eq::asymp} shows that
$\hat{\theta}_{MLE}-\theta_{MLE} 
=-H^{-1}(\theta_{MLE})(\hat{S}(\theta_{MLE})-S(\theta_{MLE}))) (1+o_P(1))$
which leads to
$$
\hat{\theta}_{MLE}-\theta_{MLE} = O(\epsilon_{1,n}). 
$$
Thus, a necessary condition to $E_1^c$ is $\epsilon_{1,n}>C_0\cdot r_0/3$
for some positive number $C_0$.
Again,
by Lemma \ref{lem::unif} and assumption (A3L),
$$
P(E_1^c) = P(\|\hat{\theta}_{MLE}-\theta_{MLE}\|> r_0/3)\leq A_{1,1} e^{-A_{1,2}n},
$$
for some positive number $A_{1,1}$ and $A_{1,2}$.
Similarly, assumption (A3L) implies that the gradient and the Hessian of the log-likelihood function
are Lipschitz so Lemma \ref{lem::unif} implies
\begin{align*}
P(E_2^c)&\leq A_{2,1} e^{-A_{2,2}n},\\
P(E_3^c)&\leq A_{3,1} e^{-A_{3,2}n},
\end{align*}
for some positive numbers $A_{2,1},A_{2,2},A_{3,1},A_{3,2}$.
Finally, by assumption (EM4),
$$
P(E_4^c)\leq \eta_n(q_{EM}).
$$
Therefore, putting it altogether,
$$
P(E_1,E_2,E_3,E_4) \geq 1- c_1 e^{-c_2n}-\eta_n(q_{EM})
$$
for some positive numbers $c_1$ and $c_2$.
Putting this into 
equation \eqref{eq::pf::EM::prob}, we obtain
$$
P\left(\hat{\theta}^{EM}_{n,M} = \hat{\theta}_{MLE}\right)\geq 
1 - \left(1-q_{EM}\right)^M-\eta_n(q_{EM}) - c_1 e^{-c_2n},
$$
which is the desired probability bound.

To see that $\hat{\theta}_{MLE}\in \cA^{EM}(\theta_{MLE})$,
Theorem~\ref{thm::EM::contract} implies that
$$
B(\theta_{MLE},r_0)\subset \cA^{EM}(\theta_{MLE}).
$$
In the event $E_1$, $\|\hat{\theta}_{MLE}-\theta_{MLE}\|\leq r_0/3$
so this implies $\hat{\theta}_{MLE} \in B(\theta_{MLE},r_0)\subset \cA^{EM}(\theta_{MLE})$,
which completes the proof.

%
%
\end{proof}

\begin{proof}[ of Theorem~\ref{thm::BC2_NP}]

The proof of Theorem~\ref{thm::BC2_NP}
is similar to the proof of Theorem~\ref{thm::AC} and \ref{thm::BC2},
so we only focus on the key steps.
The proof consists of four parts.
In the first part, we will be verifying the assumptions of
Theorem~\ref{thm::precisionset}. 
In the second part, we will derive a Gaussian approximation
between the estimated mode $\hat{\sf Mode}^\dagger(p)$ 
and $\overline{\sf Mode}(p_h)$, an element of the local mode of $p_h = \E(\hat{p}_h)$. 
$\overline{\sf Mode}(p_h)$ behaves like the quantity $\bar{\theta}$ in the proofs of Theorems~\ref{thm::AC} and \ref{thm::BC2}.
The third part is a bootstrap approximation and wrapping up
each component to obtain the final bound. 
The first three parts prove the coverage of containing an element of $\cS^\pi_{n,\delta}$,
the first assertion of the theorem. 
The last part of the proof is to derive the coverage of containing the mode in the CI.

{\bf Part I: Verifying the working assumptions.}
We will first identify $\eta_{j,n}(t)$ for $j=1,\cdots, 4$ in Theorem~\ref{thm::precisionset}. 
Under the assumption (K1--2) and (A1--2), 
Lemma~\ref{lem::kernel} implies that when $h\rightarrow 0$ and $ \frac{nh^{d+4}}{\log n}\rightarrow \infty$,
\begin{align*}
\eta_{1,n}(t) &= A_1e^{-A_2nh^{d+2}t^2}\\
\eta_{2,n}(t) &= A_1e^{-A_2nh^{d+4}t^2}
\end{align*}
for some positive number $A_1,A_2$
due to Lemma~\ref{lem::kernel}.
Moreover, $\eta_{3,n}(t)$ can be obtained by 
the Hoeffding's inequality:
$$
\eta_{3,n}(t) = 2K e^{-2nt^2},
$$
where $K$ is the number of local modes.
For $\epsilon_{4,n}(t)$,
because the collection of regions $\{\cB\oplus r: r>0\}$
has a VC dimension $1$, 
by the VC theory (see, e.g., Theorem 2.43 in \citealt{wasserman2006all}),
we can pick
$$
\eta_{4,n}(t) =8(n+1) e^{-nt^2/32}.
$$
Moreover, 
Lemma~\ref{lem::sep} holds whenever $\epsilon_{1,n},\epsilon_{2,n}< C_0$
for some constant $C_0$ so when $h\rightarrow 0$, 
$\xi_n = P(\epsilon_{1,n}>C_0\mbox{ or }\epsilon_{2,n}>C_0)\leq c_0e^{-c_1\cdot nh^{d+4}}$ for some positive constants $c_0,c_1$.
The above derivation shows that assumption (A3) and (A4) both hold 
and we have explicit forms of $\xi_n$ and each $\eta_{n,j}(t)$.


With the assumptions from (A1--4) and $\xi_n\leq c_0 e^{-c_1\cdot nh^{d+4}}$, 
Theorem~\ref{thm::precisionset} implies that there exists
a local mode $\overline{\sf Mode}(p)$
of the population density $p$ such that 
$$
\|\hat{\sf Mode}^\dagger(p) - \overline{\sf Mode}(p)\| = O(h^2) + O_P\left(\sqrt{\frac{\log n}{nh^{d+2}}}\right),
$$
with a probability of at least $1-O\left(\sqrt{\frac{\log n}{nh^{d+2}}}\right) $.
Note that this comes from choosing $t = \sqrt{\frac{k\log n}{nh^{d+2}}}$
in Theorem~\ref{thm::precisionset} with a very large $k$
and use the second assertion of Theorem~\ref{thm::precisionset}.
Although the Talagrand's inequality has a constraint on the possible range of $t$,
this choice is asymptotically possible because of the shrinking nature on the lower bound of $t$ in Lemma \ref{lem::unif}.
With this choice of $t$,
each $\eta_{n,j}(t)$ will be at rate $O(n^{-k'})$
for some large number $k'$ so they are dominated by
$O(t) = O\left(\sqrt{\frac{k\log n}{nh^{d+2}}}\right)$.

When $h\rightarrow 0$,
the smoothed density $p_h = \E(\hat{p}_h)$ converges to the true density $p$
and moreover, the gradient $g_h = \nabla p_h \rightarrow g=\nabla p$ and the Hessian $H_h = \nabla\nabla p_h\rightarrow  H = \nabla\nabla p$
in the maximum norm. 
Thus, Lemma~\ref{lem::sep} implies that 
local maxima of $p_h$ and local maxima of $p$
have a 1-1 correspondence. 
Using this fact, we define $\overline{\sf Mode}(p_h)$ to be the local mode of $p_h$
that corresponds to $\overline{\sf Mode}(p)$.
With this definition,
equation \eqref{eq::asymp} implies
\begin{equation}
\begin{aligned}
\overline{\sf Mode}(p_h) - \overline{\sf Mode}(p) = O(\sup_x\|g_h(x) - g(x)\|_{\max}) = O(h^2).
\end{aligned}
\label{eq::smoothed}
\end{equation}

Thus, we have
\begin{equation}
\begin{aligned}
\hat{\sf Mode}^\dagger(p) - \overline{\sf Mode}(p) & = 
\hat{\sf Mode}^\dagger(p) - \overline{\sf Mode}(p_h)+\overline{\sf Mode}(p_h) - \overline{\sf Mode}(p)\\
& = \hat{\sf Mode}^\dagger(p) - \overline{\sf Mode}(p_h) + O(h^2)
\end{aligned}
\end{equation}
with a probability of at least 
$1 -O\left(\sqrt{\frac{\log n}{nh^{d+2}}}\right). $

Moreover, using the same derivation as in the Part III of the proof of Theorem~\ref{thm::precisionset}, we can show that 
when we initialize a gradient ascent method $n$ times, 
the chance that $\bar{\theta}= \overline{\sf Mode}(p)\in \cS^{\pi}_{n,\delta}$
is at least 
\begin{equation}
1-\delta +O\left(\sqrt{\frac{\log n}{nh^{d+2}}}\right). 
\label{eq::mode::correct}
\end{equation}
This gives a probability bound that will be used later to calculate the coverage of a CI.

{\bf Part II: Gaussian approximation.}
In this part, we will show that the difference 
$\hat{\sf Mode}^\dagger(p) - \overline{\sf Mode}(p_h)$
%
has a Gaussian limit.

Using the same derivation as the proof of Theorem~\ref{thm::likelihood}
and equation \eqref{eq::asymp}, 
we have
\begin{align*}
\hat{\sf Mode}^\dagger(p) - \overline{\sf Mode}(p_h) 
& = H_h^{-1}(\overline{\sf Mode}(p_h)) (\hat{g}_h(\overline{\sf Mode}(p_h)) - g_h(\overline{\sf Mode}(p_h)))\\
&+ O\left(\|\overline{\sf Mode}(p_h)-\hat{\sf Mode}^\dagger(p_h)\|\cdot \epsilon'_{2,n}+\|\overline{\sf Mode}(p_h)- \hat{\sf Mode}^\dagger(p_h)\|^2\right),
\end{align*}
where $\epsilon'_{2,n} = \sup_x\|\hat{H}_h - H_h\|_{\max} = O_P\left(\sqrt{\frac{\log n}{nh^{d+4}}}\right)$.





The requirement $\frac{nh^{d+4}}{\log n}\rightarrow \infty$ implies $\epsilon'_{2,n}= o_P(1)$
and
\begin{equation}
\begin{aligned}
\sqrt{nh^{d+2}}\bigg(&\hat{\sf Mode}^\dagger(p) - \overline{\sf Mode}(p_h) \bigg)\\
& = \sqrt{nh^{d+2}} H^{-1}_h(\overline{\sf Mode}(p_h)) (\hat{g}_h(\overline{\sf Mode}(p_h)) - g_h(\overline{\sf Mode}(p_h))) +o_P(1)\\
& = \frac{1}{\sqrt{n}}\sum_{i=1}^n W_i+o_P(1)
\end{aligned}
\label{eq::pf::mode1}
\end{equation}
where 
$$
W_i = \frac{1}{\sqrt{h^{d}}}H^{-1}_h(\overline{\sf Mode}(p_h)) \left(\nabla K\left(\frac{X_i-\overline{\sf Mode}(p_h)}{h}\right)-\E\left(\nabla K\left(\frac{X_i-\overline{\sf Mode}(p_h)}{h}\right)\right)\right)
$$
is a random variable with mean $0$ and a covariance matrix $\Sigma_h$ with bounded entries.

By the multivariate Berry-Esseen bound \citep{gotze1991rate,sazonov1968multi},
there exists a $d$-dimensional normal random variable $Z$ with mean $0$ and identity covariance matrix
such that 
$$
\sup_t\left|P\left(\left\|\frac{1}{\sqrt{n}}\sum_{i=1}^n W_i\right\|\leq t\right) - P\left(\|\Sigma_h^{1/2} Z\|\leq t\right)\right| \leq c'_{BE}\frac{\E\|W_i\|^3}{\sqrt{n}},
$$
where $c'_{BE} $ is a constant. 
Note that $\E\|W_i\|^3 = O\left(\sqrt{\frac{1}{h^{d}}}\right)$
because the third power leads to a factor of $\sqrt{\frac{1}{h^{3d}}}$ and expectation will 
multiply $h^d$ into it, so the remaining order is $O\left(\sqrt{\frac{1}{h^{d}}}\right)$.


Therefore, equation \eqref{eq::pf::mode1} and the above Berry-Esseen bound implies 
that 
\begin{equation}
\sup_t\left|P\left(\sqrt{nh^{d+2}}\left\|\hat{\sf Mode}^\dagger(p) - \overline{\sf Mode}(p_h)\right\|\leq t\right) - P\left(\|\Sigma_h^{1/2} Z\|\leq t\right)\right| \leq c'_{BE}\frac{2\E\|W_i\|^3}{\sqrt{n}}.
\label{eq::mode::BE1}
\end{equation}
Note that we use the fact that the $o_P(1)$ part in equation \eqref{eq::pf::mode1}
has an expectation shrinking toward $0$ so we can bound its contribution using $2\times\E\|W_i\|^3$.

{\bf Part III: Bootstrap approximation.}
In the bootstrap case, we are sampling from $\hat{P}_n$
so $\hat{P}_n$ will be treated as the underlying population distribution function.
The smoothed density of $\hat{P}_n$ turns out to be the 
original KDE $\hat{p}_h$.
By applying the same derivation as in Part II with replacing $(\hat{p}_h,p_h)$ with $(\hat{p}^*_n, \hat{p}_h)$
and using the fact $\overline{\sf Mode}(\hat{p}_h) = \hat{\sf Mode}^\dagger(p)$,
equation \eqref{eq::mode::BE1} implies
\begin{equation}
\begin{aligned}
\sup_t\bigg|P\left(\sqrt{nh^{d+2}}\left\|\hat{\sf Mode}^{\dagger*}(p) - \underbrace{\overline{\sf Mode}(\hat{p}_h)}_{=\hat{\sf Mode}^\dagger(p)}\right\|\leq t|\mathcal{X}_n\right) &- P\left(\|\hat{\Sigma}_h^{1/2} Z\|\leq t|\mathcal{X}_n\right)\bigg| \\
&\leq c'_{BE}\frac{2\hat{\E}_n\|W^*_i\|^3}{\sqrt{n}},
\end{aligned}
\label{eq::mode::BE2}
\end{equation}
where $\hat{\Sigma}_h$ is the (conditional) covariance matrix of
$$
W^*_i = \frac{1}{\sqrt{h^{d+2}}}\hat{H}^{-1}_h(\hat{\sf Mode}^\dagger(p)) \left(\nabla K\left(\frac{X^*_i-\hat{\sf Mode}^\dagger(p)}{h}\right)-\E\left(\nabla K\left(\frac{X^*_i-\hat{\sf Mode}^\dagger(p)}{h}\right)|\mathcal{X}_n\right)\right)
$$
given the original sample $\mathcal{X}_n = \{X_1,\cdots, X_n\}$.

Essentially, $\hat{\Sigma}_h$ is just a sample analogue of $\Sigma_h$
so it is easy to see that 
$$
\left\|\hat{\Sigma}_h - \Sigma_h\right\|_{\max} = O_P\left(\sqrt{\frac{1}{n}}\right),\quad\E\left\|\hat{\Sigma}_h - \Sigma_h\right\|_{\max} = O\left(\sqrt{\frac{1}{n}}\right).
$$
Using the delta method \citep{wasserman2006all},
$$
\sup_t\left|P\left(\|\Sigma_h^{1/2} Z\|\leq t\right) - P\left(\|\hat{\Sigma}_h^{1/2} Z\|\leq t|\mathcal{X}_n\right)\right| =O\left(\left\|\hat{\Sigma}_h - \Sigma_h\right\|_{\max}\right).
$$
This, together with equations \eqref{eq::mode::BE1} and \eqref{eq::mode::BE2}, implies
\begin{align*}
\sup_t\Bigg|P\Bigg(\sqrt{nh^{d+2}}&\bigg\|\hat{\sf Mode}^{\dagger*}(p) - \hat{\sf Mode}^\dagger(p)\bigg\|\leq t|\mathcal{X}_n\Bigg) - \\
&P\Bigg(\sqrt{nh^{d+2}}\left\|\hat{\sf Mode}^\dagger(p) - \overline{\sf Mode}(p_h)\right\|\leq t\Bigg)\Bigg|\\ 
&\leq O\left(\sqrt{\frac{1}{n}}\right)+c'_{BE}\frac{2\hat{\E}_n\|W^*_i\|^3}{\sqrt{n}}+ O\left(\left\|\hat{\Sigma}_h - \Sigma_h\right\|_{\max}\right).
\end{align*}
By taking expectation in both sides and using the Jensen's inequality,
we can convert the conditional probability into a marginal probability:
\begin{align*}
\sup_t\Bigg|P\Bigg(\sqrt{nh^{d+2}}&\bigg\|\hat{\sf Mode}^{\dagger*}(p) - \hat{\sf Mode}^\dagger(p)\bigg\|\leq t\Bigg) - \\
&P\Bigg(\sqrt{nh^{d+2}}\left\|\hat{\sf Mode}^\dagger(p) - \overline{\sf Mode}(p_h)\right\|\leq t\Bigg)\Bigg|\\ 
&\leq O\left(\sqrt{\frac{1}{n}}\right)+c'_{BE}\frac{2\E(\hat{\E}_n\|W^*_i\|^3)}{\sqrt{n}}+ O\left(\E\left\|\hat{\Sigma}_h - \Sigma_h\right\|_{\max}\right)\\
& = O\left(\sqrt{\frac{1}{nh^{d}}}\right).
\end{align*}
Note that the rate $O\left(\sqrt{\frac{1}{nh^{d}}}\right)$
comes from 
$$
c'_{BE}\frac{2\E(\hat{\E}_n\|W^*_i\|^3)}{\sqrt{n}} = c'_{BE}\frac{2\E\|W_i\|^3}{\sqrt{n}}= O\left(\sqrt{\frac{1}{nh^{d}}}\right)
$$
because $\E\|W_i\|^3 = O\left(\sqrt{\frac{1}{h^{d}}}\right)$.

To infer the population quantity $\overline{\sf Mode}(p)$,
we first
recall from equation \eqref{eq::smoothed} that
$$
\overline{\sf Mode}(p_h) - \overline{\sf Mode}(p) = O(h^2).
$$
Therefore, by replacing $\overline{\sf Mode}(p_h)$ with $\overline{\sf Mode}(p)$, we obtain
\begin{equation}
\begin{aligned}
\sup_t\Bigg|P\Bigg(\sqrt{nh^{d+2}}&\bigg\|\hat{\sf Mode}^{\dagger*}(p) - \hat{\sf Mode}^\dagger(p)\bigg\|\leq t\Bigg) - \\
&P\Bigg(\sqrt{nh^{d+2}}\left\|\hat{\sf Mode}^\dagger(p) - \overline{\sf Mode}(p)\right\|\leq t\Bigg)\Bigg|\\ 
&= O\left(\sqrt{\frac{1}{nh^{d}}}\right)+O\left(\sqrt{nh^{d+6}}\right).
\end{aligned}
\label{eq::mode::quantile}
\end{equation}

Recall that our CI is
$$
\mathbb{M}_{n,\alpha} = \left\{x: \left\|x-\hat {\sf Mode}^\dagger(p)\right\|\leq \hat{d}^\dagger_{1-\alpha}\right\}.
$$
Thus, 
\begin{equation}
P\left(\overline{\sf Mode}(p)\in \mathbb{M}_{n,\alpha}\right) \geq 1-\alpha+O\left(\sqrt{\frac{1}{nh^{d}}}\right)+O\left(\sqrt{nh^{d+6}}\right).
\label{eq::pf::BT::cover}
\end{equation}

%
%

%

To calculate the coverage of containing an element within $\cS^\pi_{n,\delta}$,
the contribution in the analysis of Part I must be considered. 
By equation \eqref{eq::mode::correct} and \eqref{eq::pf::BT::cover},
\begin{align*}
P\left( \mathbb{M}_{n,\alpha}\cap \cS^{\pi}_{n,\delta} \neq \emptyset \right)
&\geq P\left( \overline{\sf Mode}(p)\in\mathbb{M}_{n,\alpha}, \overline{\sf Mode}(p)\in \cS^{\pi}_{n,\delta} \right)\\
&\geq 1-P\left( \overline{\sf Mode}(p)\notin\mathbb{M}_{n,\alpha} \right)-
P\left( \overline{\sf Mode}(p)\notin \cS^{\pi}_{n,\delta} \right) \\
& \geq 1-\alpha + O\left(\sqrt{\frac{1}{nh^{d+6}}}\right)+O\left(\sqrt{nh^{d+6}}\right)-\delta+O\left(\sqrt{\frac{\log n}{nh^{d+2}}}\right)\\
& = 1-\alpha-\delta +O\left(\sqrt{\frac{\log n}{nh^{d+6}}}\right)+O\left(\sqrt{nh^{d+6}}\right).
\end{align*}


{\bf Part IV: Coverage of containing the mode.}
The proof of this statement is very similar to the proof of Theorem~\ref{thm::EM} so
we only highlight the key quantities that we need to consider. 
Let 
$$
E_1 = 
\{\epsilon_{1,n}<C_0,\epsilon_{2,n}<C_0\}.
$$
Under $E_1$, 
%
%
there is a local mode $\tilde{\sf Mode}(\hat{p}_h)$ of $\hat{p}_h$ that corresponds to
${\sf Mode}(p)$. 
%
%
Then we define the event 
$$
E_2 = \{ \mbox{$\tilde{\sf Mode}(\hat{p}_h)= {\sf Mode}(\hat{p}_h)$.}\}
$$
Namely, $E_2$ is the event that the local mode $\tilde{\sf Mode}(\hat{p}_h)$ is also the global mode of $\hat{p}_h.$
Under events $E_1$,
define another event 
$$
E_3 = \left\{ \hat {\sf Mode}^\dagger(p)= \tilde{\sf Mode}(\hat{p}_h)\right\}. 
$$
In other words, $E_3$ is the event that the estimator we compute is $\tilde{\sf Mode}(\hat{p}_h)$. 
When events $E_1,E_2,E_3$ all happen, 
our estimator $\hat {\sf Mode}^\dagger(p)$ is the global mode of $\hat{p}_h$
and is close to ${\sf Mode}(p)$
so $\overline{\sf Mode}(p) = {\sf Mode}(p)$.
Thus,
\begin{equation}
\begin{aligned}
P({\sf Mode}(p)\in \mathbb{M}_{n,\alpha})&\geq P(\overline{\sf Mode}(p)\in \mathbb{M}_{n,\alpha}, E_1,E_2,E_3)\\
&\geq 1- P(\overline{\sf Mode}(p)\notin \mathbb{M}_{n,\alpha}) - P(E_1^c\cup E_2^c \cup E_3^c)\\
&\geq 1-\alpha+O\left(\sqrt{\frac{1}{n}}\right)+O\left(\sqrt{nh^{d+6}}\right)- P(E_1^c\cup E_2^c \cup E_3^c).
\end{aligned}
\label{eq::pf::mode::1}
\end{equation}

We can bound the probability
\begin{equation}
\begin{aligned}
P(E_1^c\cup E_2^c \cup E_3^c)& \geq P(E_1^c\cup E_2^c) + P(E_3^c)\\
&= 1-P(E_1,E_2) + P(E_3^c)\\
& = 1- P(E_1)P(E_2|E_1) + P(E_3^c)\\
& = 1-(1-P(E_1^c))\times(1-P(E_2^c|E_1)) + P(E_3^c) .
\end{aligned}
\label{eq::pf::mode::2}
\end{equation}
Because $E_1$ is true whenever the gradient and Hessian difference are sufficiently small,
$$
P(E_1^c) \leq A_1e^{-A_2nh^{d+4}}
$$
by Lemma~\ref{lem::kernel}.
Given $E_1$, $E_2$ occurs when the $\|\hat{p}_h-p\|_\infty$ is sufficiently small because assumption (A1) requires
that the mode is unique.
Thus, 
$$
P(E^c_2|E_1)\leq A_3e^{-A_4nh^{d}}
$$
for some constants $A_3$ and $A_4$.

To bound $P(E^c_3),$
note that $P(E^c_3) = \E(P(E^c_3|E_1,E_2,\mathcal{X}_n))$.
Let $\hat{\cA}_{\sf mode}$ be the basin of attraction of $\tilde{\sf Mode}(\hat{p}_h)$ based on the gradient of $\hat{p}_h$. 
Given the original sample $\mathcal{X}_n$,
if we run the initialization once, the chance of obtaining $\tilde{\sf Mode}(\hat{p}_h)$ as our estimator
is $\hat{P}_n(\hat{\cA}_{\sf mode})$.
Thus, the chance of running $n$ initializations but have no starting points within $\hat{\cA}_{\sf mode}$
is $(1-\hat{P}_n(\hat{\cA}_{\sf mode}))^n$.
Namely, $P(E^c_3|E_1,E_2,\mathcal{X}_n) = (1-\hat{P}_n(\hat{\cA}_{\sf mode}))^n$.

By triangle inequality, a lower bound on $\hat{P}_n(\hat{\cA}_{\sf mode})$ can be derived as
\begin{align*}
\hat{P}_n(\hat{\cA}_{\sf mode}) &= \hat{P}_n(\hat{\cA}_{\sf mode})-\hat{P}_n(\cA_{\sf mode}) +\hat{P}_n(\cA_{\sf mode}) -P(\cA_{\sf mode})+P(\cA_{\sf mode})\\
&\geq P(\cA_{\sf mode}) - |\hat{P}_n(\hat{\cA}_{\sf mode})-\hat{P}_n(\cA_{\sf mode})|-|\hat{P}_n(\cA_{\sf mode}) -P(\cA_{\sf mode})|.
\end{align*}
Using assumption (A2) and (A4), one can show that there exists some constant $C_0$ such that $\sup_x\|\nabla \hat{p}_h(x)-\nabla p(x)\|_{\max}<C_0$
implies
$|\hat{P}_n(\hat{\cA}_{\sf mode})-\hat{P}_n(\cA_{\sf mode})| < \frac{1}{3}P(\cA_{\sf mode})$. 
Moreover, the Hoeffding's inequality implies 
$$
P\left(|\hat{P}_n(\cA_{\sf mode}) -P(\cA_{\sf mode})|>t\right)\leq 2e^{-2nt^2}
$$
so with a probability of at least $1-2e^{-2nC_1}$ for some positive $C_1$,
$|\hat{P}_n(\cA_{\sf mode}) -P(\cA_{\sf mode})|< \frac{1}{3}P(\cA_{\sf mode})$.
Therefore, $\hat{P}_n(\hat{\cA}_{\sf mode})> \frac{1}{3}P(\cA_{\sf mode})$ with a probability of at least $1-A_5e^{-nh^{d+2}A_6}$
for some positive number $A_5$ and $A_6$.
Define another event
$$
E_4 = \left\{\hat{P}_n(\hat{\cA}_{\sf mode})\geq \frac{1}{3}P(\cA_{\sf mode})\right\}.
$$
The above analysis shows that $P(E_4)\geq 1-A_5e^{-nh^{d+2}A_6}$.

Using event $E_4$,
we obtain
\begin{align*}
P(E^c_3) &= \E(P(E^c_3|E_1,E_2,\mathcal{X}_n))\\
&= \E(P(E^c_3|E_1,E_2,\mathcal{X}_n)I(E_4))+\E(P(E^c_3|E_1,E_2,\mathcal{X}_n)I(E^c_4))\\
&\leq \E\left((1-\hat{P}_n(\hat{\cA}_{\sf mode}))^n I\left(\hat{P}_n(\hat{\cA}_{\sf mode})\geq \frac{1}{3}P(\cA_{\sf mode})\right)\right) +
P(E_4^c)\\
&\leq \left(1-\frac{1}{3}P(\cA_{\sf mode})\right)^n +A_5e^{-nh^{d+2}A_6}.
\end{align*}


So putting it altogether into equations \eqref{eq::pf::mode::1} and \eqref{eq::pf::mode::2}
and absorb the exponential terms into the big $O$'s, we obtain
\begin{align*}
P({\sf Mode}(p)\in \mathbb{M}_{n,\alpha})  \geq 
1-\alpha-\left(1-\frac{1}{3}P(\cA_{\sf mode})\right)^n+O\left(\sqrt{\frac{1}{nh^{d+2}}}\right)+O\left(\sqrt{nh^{d+6}}\right).
\end{align*}
which is the desired result.

\end{proof}

---
\end{document}